           \newcommand{\lab}[1]{\label{#1}}                
           \newcommand{\thlab}[1]{\thlabel{#1}} 
        \newcommand{\bee}{\begin{equation}}
        \newcommand{\ee}{\end{equation}}
        \newcommand{\non}{\nonumber}
        \newcommand{\bean}{\begin{eqnarray*}}
        \newcommand{\eean}{\end{eqnarray*}}
        \newcommand\eqn[1]{(\ref{#1})}
       \newcommand{\bel}[1]{\bee\lab{#1}}
        \renewcommand\tilde[1]{\widetilde{#1}}
        \newcommand\se{\subseteq}
        \newcommand\sm{\setminus}
        \newcommand{\Bad}{\ensuremath {B}}
        \newcommand{\badp}{\ensuremath {{\mathscr{Z}}_{\mathrm{y}}}}
        \newcommand{\bade}{\ensuremath {{\mathscr{Z}}_{\mathrm{p}}}}
        \newcommand{\Pid}[1]{\ensuremath{D_{#1}}}
        \def \Ae {A_{\mathrm{e}}} 
        \def \Ap {A_{\mathrm{p}}} 
        \newcommand{\ww}{(n-1)^{-1}}  
        \newcommand{\ua}{A}
        \newcommand{\ub}{B}
        \newcommand{\uh}{K}
        \newcommand{\uj}{J}
        \newcommand{\Pf}{P^{*}}
        \newcommand{\Rf}{R^{*}}
        \newcommand{\Rfs}{R^{*}}
        \newcommand{\Sf}{Y^{*}}
        \newcommand{\proj}{_{\circ \!}}
        \newcommand{\Lh}{\Lambda^{k}}
        \def \sp {Y}
          \newcommand{\Pft}{P^{*}}
        \newcommand{\Rft}{R^{*}}
        \newcommand{\Sft}{Y^{*}}
        \newcommand{\relf}[2]{\left( 1+ O \left( \frac{#1}{#2} \right) \right)}
           \newcommand{\rel}[1]{\left( 1+ O \left( #1 \right) \right)}
        \newcommand{\unn}{+}
        \def\eps{\varepsilon}
        \def \pseta{\psi_k} 
	\newcommand{\cH}{\mathcal{H}}
        \newcommand{\cT}{\mathcal{T}}
        \newcommand{\Hknm}{\mathcal{H}_k(n,m)}
        \newcommand{\Bknm}{\mathcal{B}_{k}(n, m)}
        \newcommand{\Dknm}{{\cal D}_k(n, m)}
        \def\Num{\mathcal{N}}
        \def\Pc{{\cal P}}
        \def\Rc{{\cal R}}
        \def\Sc{{\cal Y}}
        \def\Cc{{\cal C}}
        \def\cC{{\cal C}}
        \def \Vp {{\cal V}_{\mathrm{p}} }
        \def \Vs {{\cal V}_{\mathrm{y}} }
        \def\D{{\mathfrak{D}}}	
        \def\R{\mathbbm{R}}		
        \newcommand{\reals}{\R}
        \def\Z{\mathbbm{Z}_+}	
        \def\W{{\mathfrak{W}}} 	
        \def\pr{{\bf P}}			
        \def\Pr{\pr}			
        \def\Var{{\bf Var}}
        \def\ex{{\bf E}}
        \def\dv{{\bf d}}
        \def\sv{{\bf y}}
        \def\dw{{\bf d}'}
        \def\pv{{\bf p}}
        \def\rv{{\bf r}}
        \def\reg{\mathrm{reg}}
        \def\ve{{\bf e}}
        \def\eb{{\ve_b}}
        \def\ea{{\ve_a}}
        \def\ev{{\ve_v}}
        \def\dvB{\dv}
        \newcommand{\size}[1]{\ensuremath{\left| #1 \right|}}
        \newtheorem{thm}{Theorem}[section]
        \newtheorem{cor}[thm]{Corollary}
        \newtheorem{conj}[thm]{Conjecture}
        \newtheorem{lemma}[thm]{Lemma}
        \newtheorem{proposition}[thm]{Proposition}
        \newtheorem{claim}[thm]{Claim}
        \theoremstyle{definition}
        \newtheorem{remark}[thm]{Remark}
\begin{document}

\begin{frontmatter}[classification=text]

\title{Asymptotic Enumeration of Hypergraphs by Degree Sequence\titlefootnote{Research supported by ARC Discovery Project DP180103684. }} 

\author[nina]{Nina Kam\v cev}
\author[anita]{Anita Liebenau}
\author[nick]{Nick Wormald}

\begin{abstract}
We prove an asymptotic formula for the number of $k$-uniform hypergraphs with a given degree sequence, for a wide range of parameters. In particular, we find a formula that is asymptotically equal to the number of $d$-regular $k$-uniform hypergraphs on $n$ vertices provided that $dn\le c\binom{n}{k}$ for a constant $c>0$, and {$3 \leq k < n^C$ for any $C<1/9.$} 
        Our results relate the degree sequence of a random $k$-uniform hypergraph to a simple model of nearly independent binomial random variables, thus extending the recent results for graphs due to the second and third author.
\end{abstract}
\end{frontmatter}

    \section{Introduction}

        Enumeration is a central topic in combinatorics. Classical results include, for example, Cayley's formula for the number of $n$-vertex trees~\cite{cayley89} and Moon's formula of the number of trees with a given degree sequence~\cite{moon70}. Enumeration of more complex graphs tends to be difficult, even if one asks only for approximate (asymptotic) formulae.
        Of particular interest has been the problem of finding formulae for the number of $d$-regular graphs on $n$ vertices. No exact formulae in closed form are known for this problem, not even when $d=3$. In 1959,  Read~\cite{read59} found a recursive relation and an asymptotic formula for the number of 3-regular graphs. This was extended by several researchers~\cite{bc78,lw17,mckay85,mw90,mw91,wormald78} and now an asymptotic formula for the number of $d$-regular graphs is known for all $ d \in [1, n-1]$. 
        In fact, these results provide asymptotic formulae for the number of graphs with a given degree sequence, not just regular, for a wide range of sequences. The focus of this work is on $k$-uniform hypergraphs, which have been well studied in the context of enumeration. The regular case is considered by Cooper, Frieze, Molloy and Reed~\cite{cfmr96} and by 
        Dudek, Frieze, Ruci\'nski and \v{S}ileikis~\cite{dfrs13} for the rather sparse range and for constant $k$, and by Kuperberg, Lovett and  Peled~\cite{klp17} for the dense range and sufficiently large $k$. Blinovsky and Greenhill~\cite{bg16,bg16lin} provided asymptotic formulae for irregular `sparse' degree sequences and allowed for $k$ to increase moderately with $n$. We provide more details on the range of these results further below.
        
        Hypergraph enumeration is closely related to properties of random or `typical' hypergraphs. In the case of graphs, for instance, McKay and Wormald~\cite{mw97} showed that if certain asymptotic fomulae for the number of graphs with a given degree sequence hold, then the degree sequence of the random graph $G(n,p)$ can be modelled by a sequence of independent binomial random variables. Furthermore, the above-mentioned hypergraph enumeration results were used in studying perfect matchings~\cite{cfmr96} and Hamilton cycles~\cite{agir16} in random regular hypergraphs, as well as enumerating linear hypergraphs with a given degree sequence~\cite{bg16lin}. 
       Dudek, Frieze, Ruci{\'n}ski and {\v{S}}ileikis~\cite{dfrs17} gave a result that `embeds' the Erd\H{o}s-R\'enyi hypergraph inside the random regular hypergraph, thereby extending Kim and Vu's well known result for graphs to a denser case and also to hypergraphs. As they point out in their introduction, a significant obstacle was the lack of suitable enumeration formulae (as used by Kim and Vu), and such formulae may lead to another proof of their hypergraph result. The present paper provides such formulae. In addition, enumeration results were used in~\cite{egm19}  for uniformly sampling  graphs with a given power law degree sequence using  Monte Carlo Markov chains (MCMC). Uniform sampling of hypergraphs using MCMC has not advanced as far theoretically as the graph case, and our enumeration results may have implications there as well. Finally, as pointed out in~\cite{xyh13}, enumeration of hypergraphs with given degrees is of interest in statistics of social and other networks. Xi, Yoshida and Haws~\cite{xyh13}  apply sequential importance sampling in estimating the number of multi-way 0-1 tables with given marginals, which correspond to hypergraphs with appropriate degree constraints.

        Our contribution includes asymptotic enumeration of $k$-uniform hypergraphs with a given degree sequence, for wide a range of parameters not covered previously. In particular, we show that there is a constant $c >0$ such that if {$k \leq n^C$ for some $C<1/9$} and $d < \frac ck \binom {n-1}{k-1}$, the number of $d$-regular $k$-uniform hypergraphs on $n$ vertices is asymptotically equal to
        	\bel{eq:regularcount} 
        	    \binom{ \binom nk }{dn/k} \binom{{n \binom{n-1}{k-1} }}{dn}^{-1}  
         \binom{ \binom{n-1}{k-1}}{d}^n
           e^{ (k-1)/{2}}.
            \ee

    When $d$ exceeds our bound and $k \geq C'$ for a sufficiently large constant $C'$, then the results of~\cite{klp17} apply. Combining this result with ours covers the range $d \leq \frac 12 \binom{n-1}{k-1}$, and hence (by complementation), the full range for $d$, provided that $k \geq C'$.
           
        Moreover, we show that the degree sequence of a random $k$-uniform hypergraph with $m$ edges can be modelled by a sequence of independent binomial random variables, conditioning on their sum being $km$. 
        
            These developments parallel the history of enumerating $d$-regular graphs. Namely, the switching methods give results for $d = o(\sqrt{n})$~\cite{mckay85,mw91} whereas analytic techniques are useful for dense graphs, i.e.~roughly when $d \in (n/ \log n, 1/2)$~\cite{mw90}. The method of Liebenau and Wormald~\cite{lw17} covers the intermediate range, with a significant overlap, and this is the method we build on in the present work.

            Let us introduce some notation necessary for discussing our results. Given an integer $k$, a \emph{k-uniform hypergraph} (or \emph{k-graph}) on a vertex set $V$ is a collection of $k$-element subsets of $V$, which are called the \emph{edges} of the hypergraph. The \emph{degree} of a vertex $v$ in a $k$-graph $H$ is the number of edges containing~$v$. 
            A hypergraph is \emph{d-regular} if each vertex has degree~$d$. 
            The \emph{degree sequence} of a $k$-graph $H$ on the vertex set $V=[n]$ is the ordered $n$-tuple $(\deg_{H}(v): v \in V)$. We actually study a more general question of estimating the number of $k$-uniform $n$-vertex hypergraphs with a given degree sequence $\dv = (d_1, \dots, d_n)$, as the number of vertices $n$ tends to infinity. All our asymptotic statements are with respect to $n$. The edge size $k$ and the given degree sequence $\dv$ may depend on $n$.

        We state our formulae in probabilistic language which allows us to apply established probabilistic techniques, as well as giving more  concise and intuitive formulae. Let $\Hknm$ be the uniform probability space on the set of $n$-vertex $k$-graphs with $m$ edges, and let $\Dknm$ denote the probability space induced by the degree sequence of $\Hknm$. The ground set of $\Dknm$ is $\{\dv \in \Z^n : \sum_{v \in [n]} d_v = km \}$, denoted by $\Omega$. 
        Note that, for a sequence $\dv \in\Omega$, the number of $k$-graphs on $n$ vertices with degree sequence $\dv$ is equal to 
        {$\pr_{\Dknm}(\dv)\left( {\binom nk \atop m }\right)$}, so enumeration of $k$-graphs with a given degree sequence is equivalent to estimating $\pr_{\Dknm}(\dv)$.  Our main result   relates $\Dknm$ to a much simpler space of independent binomial random variables. 
        Let $\Bknm$ be the probability space on $\Omega$ in which a sequence is chosen by sampling each component $d_v$ according to the binomial distribution $\text{Bin}\left(\binom{n-1}{k-1}, {p} \right)$, where $p \in (0,1)$,  conditioned on $\sum_v d_v = km$ and being otherwise independent.   We omit $p$ from the notation since each such $p$ defines the same probability space: for any vector $\dv\in\Omega$, we have 
        \bee \label{eq:condbinom}
            \pr_{\Bknm}(\dv) = \binom{n\binom{n-1}{k-1}}{km}^{-1}   \prod_{v \in [n]}\binom{ \binom{n-1}{k-1}}{d_v}.  
        \ee
        
 The following theorem establishes that the space $\Bknm$ can be used to model the degree sequence $\Dknm$, as long as $k$ is not too large. We use $o(1)$  and $\omega(1)$ to denote functions of $n$ tending to 0 and $\infty$, respectively, as $n\rightarrow \infty$. We say that the convergence in the error term $o()$ holds uniformly for all $\dv$ in some set if there is a fixed upper bound $g(n)$ on that error term, where $g(n)\to 0$ as $n\to\infty$, for all $\dv$ in the given set. A similar interpretation holds for uniform convergence in $O()$ bounds.  
    \begin{thm} \thlab{t:main}
    {There exists a real constant $c>0$ such that the following holds for any positive constant $C<1/9.$} Let  $k, n$ and $m$ be integers with $3 \leq k < n^{C}$ and  $ \sqrt{n}< m < \frac{c}{k}\binom nk$.  Then there exists a set $\W$ that has probability $1-O \left(n^{-\omega(1)}\right)$ in both $ \Bknm$ and $\Dknm$, such that uniformly for all $k, m$ as above and all $\dv \in \W$ 
            \bee    \label{eq:binomapp}
                \pr_{\Dknm}(\dv)  =  \pr_{\Bknm}(\dv) (1 + o(1)).
            \ee
            \end{thm}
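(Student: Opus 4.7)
The plan is to adapt the switching strategy of Liebenau--Wormald for graphs to the hypergraph setting. Define the ratio $F(\dv)=\pr_{\Dknm}(\dv)/\pr_{\Bknm}(\dv)$. Since both $\pr_{\Dknm}$ and $\pr_{\Bknm}$ sum to one over $\Omega$, if we can exhibit a set $\W$ of probability $1-n^{-\omega(1)}$ in both measures on which $F\equiv c(1+o(1))$ for a single constant $c$, then the two normalisations force $c=1+o(1)$, giving \eqn{eq:binomapp}.

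The set $\W$ would consist of degree sequences whose coordinates $d_v$ lie within a mild window around the mean $km/n$. Concentration in $\Bknm$ is a routine Chernoff estimate on the conditioned binomials. Concentration in $\Dknm$ follows from a second-moment argument, using that the Erd\H{o}s--R\'enyi $k$-graph conditioned on having exactly $m$ edges is $\Hknm$, so that an estimate of $\Var(d_v)$ in the sparse random $k$-graph transfers.

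The core of the proof is a switching operation on $\Hknm$. Given vertices $a \neq b$, the basic move replaces an edge $e \ni a$, $e \not\ni b$ by $(e \sm \{a\})\cup\{b\}$, provided this does not produce a repeated edge. Counting forward and backward switchings between hypergraphs of degree sequence $\dv$ and of $\dv + \eb - \ea$, and carefully subtracting forbidden configurations (overlapping edges, edges containing both $a$ and $b$, collisions between the edge sets at the two endpoints), yields an asymptotic expression for $\pr_{\Dknm}(\dv + \eb - \ea)/\pr_{\Dknm}(\dv)$. The analogous ratio in $\Bknm$ is explicit from~\eqn{eq:condbinom}, and the two should agree up to a $1+o(1)$ factor throughout $\W$. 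Iterating along short switching paths connecting an arbitrary $\dv\in\W$ to a reference near-regular sequence then propagates the approximate constancy of $F$ across $\W$.

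The principal obstacle, I expect, is the tightness of the error control when $k$ grows with $n$. Each switching creates $O(k)$ potentially forbidden overlap configurations, and the relative error per switching must be smaller than the typical fluctuation scale of $\dv$ so as not to accumulate over the $\Theta(n)$-length switching chains required to cover $\W$. It is precisely this balancing act that dictates the range $k < n^{1/9}$; attaining it will likely require second-order corrections to the single-step switching count (e.g.\ via refined pair-switchings or a local expansion of the hypergraph count around the regular sequence), together with a delicately chosen $\W$ that neutralises the dominant error contributions from near-overlapping edges.
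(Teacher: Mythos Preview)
Your high-level architecture (compute single-step ratios $R_{ab}(\dv)$, compare to the explicit $\Bknm$ ratios, telescope to a reference sequence, then normalise) matches the paper's. The gap is in the assertion that the $\Dknm$ and $\Bknm$ single-step ratios ``should agree up to a $1+o(1)$ factor throughout $\W$''. They do not. The switching computation (paper's Claim~4.1 and Lemma~6.1) gives
\[
R_{ab}(\dv)=\frac{d_a}{d_b}\Bigl(1+\frac{(k-1)(d_a-d_b)}{dn}\Bigr)\bigl(1+O(\delta)\bigr),
\]
whereas the $\Bknm$ ratio from~\eqn{eq:condbinom} is $\frac{d_a}{d_b}\bigl(1+O(\mu)\bigr)$. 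The factor $\exp\bigl((k-1)(d_a-d_b)/(dn)\bigr)$ is a \emph{systematic} discrepancy, not a random error; telescoped along any path from $\dv$ to $\dv_{\reg}$ it accumulates to exactly $\widetilde H(\dv)/\widetilde H(\dv_{\reg})$ with $\widetilde H(\dv)=\exp\bigl(\tfrac{k-1}{2}(1-\sigma^2(\dv)/d)\bigr)$. Thus what one actually obtains is $F(\dv)=c_1\,\widetilde H(\dv)\,(1+o(1))$, and $\widetilde H$ is \emph{not} asymptotically constant on a set defined merely by ``$d_v$ in a mild window around the mean'': on such a set $\sigma^2(\dv)$ can range from $0$ to $\Theta(d\log n)$, so $\widetilde H$ varies by a factor of $e^{\Theta(k\log n)}$. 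The fix, which you have not identified, is that $\W$ must additionally pin down $\sigma^2(\dv)$ to within $o(d/k)$ of its mean $d(1-\mu)$; this is the content of the paper's Claims~4.2 and~6.4, and the required concentration of $\sigma^2$ is obtained via McDiarmid (your proposed second-moment argument would give only polynomial, not $n^{-\omega(1)}$, tails).

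A second, structural point: a single round of switching does not give the ratio $R_{ab}$ to sufficient accuracy across the whole range of $d$. The paper splits into a sparse regime (two iterations of the recursive identities of Lemma~3.2 suffice) and a dense regime, where the required precision forces a fixed-point/contraction argument on operators $\Pc,\Rc,\Sc$ acting on approximate edge, ratio, and two-edge-path probabilities. Your sketch of ``refined pair-switchings or a local expansion'' is in the right spirit but does not anticipate that roughly $\log(nd)$ levels of refinement are needed in the dense case, which is why the paper replaces explicit iteration by a contraction bound. This, rather than the single-step overlap bookkeeping you highlight, is where the bound $k<n^{1/9}$ actually enters.
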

       The theorem is restricted to $k\ge 3$ because  the fact that it holds when $k=2$ was shown in the earlier paper~\cite{lw17}, and our proofs here do not cover  that case. The asymptotic equivalence between $\Dknm$ and $\Bknm$ asserted by the theorem immediately opens up possibilities for proving properties of $\Dknm$, i.e.\ the   degree sequence of a random hypergraph with a given number of edges.
 This also has immediate implications for the \emph{binomial random hypergraph}, denoted by $\cH_k(n, p)$, in which each edge is included by sampling independently with probability $p$:  the distribution of the degree sequence of $\cH_k(n, p)$ is that of $\Dknm$ where $m$ has the distribution of $\text{Bin}\left(\binom{n}{k}, p \right)$. Precise  results of this sort were given in~\cite{mw97} for the case $k=2$, where the distribution of degrees in $\cH_2(n, p)$ is given a description reminiscent of de Finetti's theorem, as a mixture of finite sequences of independent binomial random variables. This approach is leading to results on the degree sequence of random graphs that are much stronger than obtained by the traditional approach, which often involves computing moments directly in the graph model.  By similar means, one can expect that some of the statistics of the degree sequences of $\cH_k(n, m)$ and $\cH_k(n, p)$ to be accessible from studies of independent copies of $\text{Bin}\left(\binom{n-1}{k-1}, p \right)$.

          {Our enumeration results which imply   \thref{t:main}  apply to a wider range of degree sequences than the set $\W$ referred to in \thref{t:main}, extending to degree sequences where the conditioned binomial model $\Bknm$   no longer gives asymptotically correct probabilities. These results are given in the two following theorems, whose proofs are related.}
            The following theorem implies \thref{t:main} in the moderately dense case, that is, when {the average degree} $d{=km/n}$ is at least polylogarithmic in $n$, and when the uniform edge size~$k$ is not too large with respect to $d$. The sequences in that theorem are nearly regular, by which we mean that all their components do not deviate significantly from the average $d$.  
            Define $\sigma^2(\dv) = \frac{1}{n}\sum_{v \in [n]}(d_v - d)^2$.    
            \begin{thm} \thlab{t:denseCase}
        			There exists $c>0$ such that the following {statements hold}. Let  $k$, $n$ and $ m$ be integers such that $k \geq 3$, let   $\varphi \in \left(\frac{4}{9}, \frac{1}{2} \right)$ and define $d = km/ n,$ $\mu = m \binom{n}{k}^{-1}$. Let $\D$ be the set of sequences $\dv$ of length $n$ satisfying $\sum_{v \in V} d_v = dn$ and $|d_v -d|\leq d^{1-\varphi}$ for all $v \in V$.  Suppose moreover that 
        			\bel{eq:conds}
        			    k^2 {\log^2 n} / \sqrt{n} {<c},\  k^3 {d^{1-3\varphi}} {<c}, \ k\mu <c \text{ and }   \log^Cn = o(d)  \text{ for any   fixed  integer } C.
        			   \ee		
        		Then uniformly  for all $\dv \in \D$ we have
                \bee    \label{eq:degseqformula}
                \pr_{\Dknm}(\dv ) = \pr_{\Bknm}(\dv )\exp \left( \frac{k-1 }{2} - \frac{(k-1) n \sigma^2(\dv )}{2d(n-k)(1-\mu)} \right) \rel{\eta_k} , 
                \ee
                where 
                $$\eta_k = 
                    \begin{cases}  \displaystyle 
                        \frac{ {\log^2 n} }{\sqrt{n}} + \frac{d^{2-4\varphi}}{n} + d^{1 - 3 \varphi} , & k=3
                        \\
                    \displaystyle    \frac{k^2 {\log^2 n} }{\sqrt{n}} + (\mu n +k)k^2 d^{1-3 \varphi}, \rule{0cm}{7mm} & k \geq 4.
                    \end{cases} $$
        		{Moreover,} $\eta_k < 3c$ {for $n$ sufficiently large.}
            \end{thm}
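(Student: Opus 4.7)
The proof will follow the switching--normalisation strategy of Liebenau and Wormald~\cite{lw17}, adapted to $k$-uniform hypergraphs. Let $R(\dv) = \pr_{\Dknm}(\dv)/\pr_{\Bknm}(\dv)$; the aim is to show that $\log R(\dv) = (k-1)/2 - (k-1)n\sigma^2(\dv)/(2d(n-k)(1-\mu)) + O(\eta_k)$ uniformly on $\D$. The argument splits naturally into three stages: (i) estimate ratios $R(\dv)/R(\dv')$ for neighbouring $\dv,\dv'$ via a switching, (ii) telescope along a path from a regular reference sequence to obtain the $\sigma^2$ correction, (iii) pin the constant $\log R(\dv^*)$ using the normalisation $\sum_\dv \pr_{\Dknm}(\dv) = \sum_\dv \pr_{\Bknm}(\dv) = 1$.

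For step (i) I would introduce an edge-relocation switching $S_{ab}$: given $H$ with $\deg H = \dv$, choose an edge $e \ni a$ with $b \notin e$ and replace $e$ by $e' := (e\setminus\{a\})\cup\{b\}$, calling the move \emph{valid} when $e'\notin H$. The resulting hypergraph has degree sequence $\dv' = \dv - \ea + \eb$. Counting valid moves forward and backward gives
$$\frac{\pr_{\Dknm}(\dv)}{\pr_{\Dknm}(\dv')} \;=\; \frac{\ex[B_{ab}(H')\mid \deg H' = \dv']}{\ex[F_{ab}(H)\mid \deg H = \dv]},$$
whose leading order is $d_b(1-\mu)/(d_a(1-\mu)) = d_b/d_a$, after absorbing the dominant rejection rate coming from $e'$ already lying in $H$. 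Finer terms involve pair co-degrees at $\{a,b\}$ and, for $k\geq 4$, triple and higher co-degrees among the $k-2$ remaining vertices of $e$. Comparing with the explicit binomial ratio
$$\log\frac{\pr_{\Bknm}(\dv)}{\pr_{\Bknm}(\dv')} \;=\; \log\frac{(N-d_a+1)(d_b+1)}{d_a(N-d_b)}, \qquad N = \binom{n-1}{k-1},$$
cancels these leading pieces and leaves an expression for $\log R(\dv)-\log R(\dv')$ that is quadratic in the centred deviations $d_v - d$.

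The technical heart of the proof, and the main obstacle, is establishing concentration of $F_{ab}(H)$ and $B_{ab}(H')$ within the fibres over $\dv$ and $\dv'$, uniformly over $\dv\in\D$. Concretely one needs, with probability $1 - O(n^{-\omega(1)})$, that the pair and higher co-degrees of $\{a,b\}$ in a uniform random $H\in\Hknm$ with $\deg H = \dv$ agree with their means to within a multiplicative error $1+o(\eta_k)$; this is the hypergraph analogue of the degree-concentration results in~\cite{lw17} and is where the three summands of $\eta_k$ originate. The $k^2\log^2 n/\sqrt n$ term reflects the typical fluctuation of a pair degree across the fibre; the $k^2 d^{1-3\varphi}$ term (multiplied by $\mu n + k$ when $k\geq 4$) reflects the contribution of higher co-degrees combined with the admissible deviations in $\D$; and the assumptions $k^2\log^2 n/\sqrt n < c$, $k^3 d^{1-3\varphi} < c$, $k\mu < c$ in~\eqref{eq:conds} are exactly what keep all such errors below a small absolute constant, forcing $\eta_k < 3c$.

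With the ratio estimate in hand, step (ii) telescopes single-swap moves along any path from a reference regular sequence $\dv^*$ to $\dv\in\D$. The summed quadratic form evaluates to $-(k-1)n\sigma^2(\dv)/(2d(n-k)(1-\mu))$, as in~\eqref{eq:degseqformula}; path-independence (a consistency check) follows from the exact form of the binomial ratios. For step (iii), since $\pr_{\Bknm}$ concentrates on $\D$, substituting the just-derived formula into $\sum_{\dv\in\Omega}\pr_{\Dknm}(\dv) = 1$ produces a conditional-Gaussian sum over the small deviations $d_v - d$; evaluating this sum (using the independence of the $d_v$'s in $\Bknm$ prior to conditioning and the variance of a $\mathrm{Bin}(N,p)$ variable) isolates the constant $e^{(k-1)/2}$, which is the hypergraph analogue of the $\sqrt e$ factor in the graph case of~\cite{lw17} and ultimately reflects the $k-1$ vertices shared between $e$ and $e'$ in the switching.
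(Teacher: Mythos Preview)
Your three-stage outline (switching ratio, telescoping, normalisation) matches the paper's architecture, and stages (ii) and (iii) are essentially what the paper does. The genuine gap is in stage (i). You propose to control the switching ratio by proving \emph{concentration} of the forward and backward switching counts $F_{ab}(H)$, $B_{ab}(H')$---equivalently, concentration of co-degree statistics---for a uniformly random $H$ with prescribed degree sequence $\dv$. This is circular: concentration results of that kind are normally consequences of the enumeration formula you are trying to prove, and you offer no independent mechanism for establishing them. Even granting concentration, it would not deliver what you need: you would still have to compute the value about which the counts concentrate, and that value is exactly the sum of edge and path probabilities $P_K(\dv)$ and $\sp_{a,K,b}(\dv)$ whose evaluation is the real difficulty.

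The paper avoids this entirely. The switching identity it derives is \emph{exact},
\[
R_{ab}(\dv)=\frac{d_a}{d_b}\cdot\frac{1-B(a,b,\dv-\eb)}{1-B(b,a,\dv-\ea)},
\]
with $B$ an explicit linear combination of the probabilities $P$ and $\sp$; no concentration within a fibre is ever invoked. The substance of the proof is then to compute $P$ and $\sp$ to adequate precision, and this is done by a fixed-point argument rather than a probabilistic one: the recursive relations are recast as operators $\Pc,\Rc,\Sc$ whose composite $\cC$ has $(P,\sp)$ as an exact fixed point; explicit candidate functions $(\Pf,\Sf)$ are written down and checked to be near-fixed; and contraction of $\cC$ (with rate $O(k(\mu+\alpha))$) forces $(P,\sp)\approx(\Pf,\Sf)$. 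The error $\eta_k$ then arises from the per-step ratio error $\delta$ multiplied by the telescoping length $O(nd^{1-\varphi})$, together with the term $k\xi$ coming from concentration of $\sigma^2(\dv)$ in $\Dknm$ and $\Bknm$ during the normalisation---not from co-degree fluctuations inside a fibre as you suggest.
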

                        
 For the 
           constant degree sequence $\dv_{\reg}=(d, d, \dots, d)$,  \thref{t:denseCase} implies~\eqn{eq:regularcount},  
and hence ${\dv_\reg}$ fails to satisfy~\eqn{eq:binomapp} (for $k\ge 2$). This can be seen in~\eqn{eq:degseqformula}:    $\dv_\reg$ occurs in $\Dknm$ with a higher probability than in $\Bknm$ because $\sigma^2(\dv_{\reg})=0$, which deviates significantly from the typical value of $\sigma^2(\dv)$ in $\Bknm$ and $\Dknm.$ 
           Still, Theorem~\ref{t:main} is deduced from \thref{t:denseCase}, combined with \thref{t:sparseCase} below,
            by considering the `annulus' consisting of degree sequences $\dv$ for which $\sigma^2(\dv)$ is \emph{close} to its expectation so that the exponential term in~\eqref{eq:degseqformula} tends to $1$.
                
            Our next result, like those of~\cite{bg16,bg16lin,cfmr96,dfrs13}, covers the \emph{sparse} regime, so we have no assumptions on the variation of the entries of the sequence and the errors are expressed in terms of maximum degree $\Delta(\dv)= \max_{v \in [n]} d_v$. The theorem is particularly important for complementing~\thref{t:denseCase} when $k$ is large compared to the average degree $d$. 
                         \begin{thm} \thlab{t:sparseCase}
                Let $n$, $k$, $\Delta_*$ and $m$ be {positive} integers.  Define
                \begin{eqnarray}
                     \psi_k = \begin{cases} \displaystyle 
                         \frac {1}{m} \left( \Delta_* ^{3} + \log^9 n \right) , & k=3, \\
           \displaystyle  \frac {k^2}{m} \left( \Delta_* ^2 + k \Delta_* \log^4 n +   k^2 \log^9 n \right), \rule{0cm}{7mm} & k \geq 4
        			    \end{cases}
        			\label{eq:sparse4}
                \end{eqnarray}
                and assume $\psi_k <1$.  
                {Uniformly for all  sequences} $\dv \in \Z^n$
                with  $\sum_{v \in [n]}d_v= km$ and $\Delta(\dv) \leq \Delta_*$, we have  
                $$\pr_{\Dknm}(\dv) = \pr_{\Bknm}(\dv)\exp \left( \frac{(k-1)}{2}\left(1- \frac{ n  \sigma^2(\dv)}{km} \right)\right) \rel{\sqrt{\psi_k}}  .$$
                \end{thm}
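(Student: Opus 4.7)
The plan is to adapt the switching method of Liebenau--Wormald~\cite{lw17} from graphs to $k$-uniform hypergraphs. For distinct vertices $a,b\in[n]$, define the switching $S_{a,b}$ that takes a hypergraph $H$ with degree sequence $\dv$, selects an edge $e\in H$ satisfying $b\in e$, $a\notin e$, $(e\setminus\{b\})\cup\{a\}\notin H$, and replaces $e$ by $(e\setminus\{b\})\cup\{a\}$. The resulting hypergraph has degree sequence $\dv+\ea-\eb$, and $S_{b,a}$ is the inverse operation. Double counting the bipartite graph whose edges are valid switchings between the sets of hypergraphs with these two degree sequences yields the exact identity
$$
\frac{\pr_{\Dknm}(\dv)}{\pr_{\Dknm}(\dv+\ea-\eb)} = \frac{\ex[M_{a,b}(H')\mid \deg(H')=\dv+\ea-\eb]}{\ex[N_{a,b}(H)\mid \deg(H)=\dv]},
$$
where $N_{a,b}$ and $M_{a,b}$ count valid forward and reverse switchings. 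All approximation is then pushed into estimating these two conditional expectations.

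\textbf{Counting step.} Writing $\mathrm{cod}_H(a,b)=|\{e\in H:\{a,b\}\subseteq e\}|$ and $X_{a,b}(H) = |\{e\in H:b\in e,\ a\notin e,\ (e\setminus\{b\})\cup\{a\}\in H\}|$, we have the exact identity
$$
N_{a,b}(H) = d_b - \mathrm{cod}_H(a,b) - X_{a,b}(H),
$$
with an analogous formula for $M_{a,b}$. Under the uniform measure on hypergraphs with degree sequence $\dv$, the expected codegree is, to leading order, $d_a d_b(k-1)/\binom{n-1}{k-1}$ and the expected conflict count $X_{a,b}$ is of order $k^2 d_a d_b^2/\binom{n-1}{k-1}$. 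These conditional expectations need to be computed to second order so that their contributions assemble into the exponential factor $\exp\bigl(\tfrac{k-1}{2}(1 - n\sigma^2(\dv)/(km))\bigr)$. The separate treatment of $k=3$ in the definition of $\psi_k$ reflects the fact that when $k=3$ the swapped edge shares only $k-2=1$ vertex with the original, which eliminates a family of higher-order conflict configurations present for $k\ge 4$ and is what removes the leading $k^2$ factor from $\psi_3$.

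\textbf{Telescoping.} Fix a nearly regular reference sequence $\dv_0$ whose entries lie in $\{\lfloor km/n\rfloor,\lceil km/n\rceil\}$. For any $\dv$ satisfying the hypotheses, connect $\dv_0$ to $\dv$ by a chain of $O(n\Delta_*)$ switchings through intermediate sequences whose maximum degrees remain bounded by $\Delta_*$. Multiplying the ratio formula along the chain and dividing by the analogous telescoping of $\pr_{\Bknm}(\dv)/\pr_{\Bknm}(\dv_0)$, which is elementary from~\eqref{eq:condbinom}, the dominant factors $d_b/(d_a+1)$ cancel against the corresponding binomial ratios, while the accumulated second-order codegree and conflict corrections sum to the claimed exponential. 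The base ratio $\pr_{\Dknm}(\dv_0)/\pr_{\Bknm}(\dv_0)$ in the near-regular sparse range is handled by a direct configuration-model computation.

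\textbf{Main obstacle.} The principal difficulty is establishing \emph{uniform} concentration of $\mathrm{cod}_H(a,b)$ and $X_{a,b}(H)$ over all pairs $(a,b)$ under the uniform distribution on hypergraphs with a fixed degree sequence: conditioning on the full degree sequence destroys edge independence, so Chernoff-type bounds do not apply directly. The natural remedy is a bootstrap in which a weak preliminary version of the switching ratio is first used to compare $\Dknm$ with $\Bknm$ for small substructures, transferring concentration from the latter to the former, and the sharpened concentration is then fed back to obtain the precise ratio. Verifying that all error contributions combine into exactly $\sqrt{\psi_k}$, and that the polylogarithmic tails responsible for the $\log^4 n$ and $\log^9 n$ summands in~\eqref{eq:sparse4} are tracked throughout, is the most delicate component of the argument.
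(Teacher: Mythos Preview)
Your switching setup and telescoping skeleton match the paper, but two of your key technical claims are off in ways that would block the argument as written.

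\textbf{Concentration is a red herring.} Your identity for the ratio involves only the \emph{expectations} $\ex[N_{a,b}]$ and $\ex[M_{a,b}]$, and since $N_{a,b}(H)=d_b-\mathrm{cod}_H(a,b)-X_{a,b}(H)$ is linear, you only need $\ex[\mathrm{cod}_H(a,b)]$ and $\ex[X_{a,b}(H)]$, i.e.\ sums of edge probabilities $P_K(\dv)$ and two-edge-path probabilities $Y_{a,K,b}(\dv)$. No concentration of the random variables $\mathrm{cod}_H$ or $X_{a,b}$ is required, and the bootstrap you outline is not the mechanism. The paper instead uses exact recursive identities (its Lemma~\ref{t:recrel}) expressing $R_{ab}$, $P_K$, and $Y_{a,K,b}$ in terms of one another at perturbed degree sequences; a crude switching bound $P_K(\dv)\le 2q!D_K/M_1^q$ seeds the recursion, and two passes through the identities yield $R_{ab}(\dv)=\frac{d_a}{d_b}\bigl(1+\frac{(k-1)(d_a-d_b)}{M_1(\dv)}\bigr)$ with the required error. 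Incidentally, your leading-order expected codegree should be $d_ad_b(k-1)/(km)$, not $d_ad_b(k-1)/\binom{n-1}{k-1}$; these differ by a factor of $\mu^{-1}$, which is large in the sparse regime.

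\textbf{The base constant cannot be read off a configuration model.} The range of the theorem allows $\Delta_*$ as large as roughly $m^{1/2}$ (for $k\ge 4$) or $m^{1/3}$ (for $k=3$), well beyond where a direct configuration-model count of the near-regular sequence is available. The paper fixes the unknown constant $c_1=\pr_{\Dknm}(\dv_{\mathrm{reg}})/H(\dv_{\mathrm{reg}})$ differently: it identifies a set $\W$ on which the correction $\widetilde H(\dv)=\exp\bigl(\tfrac{k-1}{2}(1-n\sigma^2(\dv)/(km))\bigr)$ is $1+o(1)$, shows via Lemma~\ref{l:sigmaConc} that $\W$ has probability $1-n^{-\omega(1)}$ in both $\Dknm$ and $\Bknm$, and then sums the telescoped relation $\pr_{\Dknm}(\dv)=c_1 H(\dv)e^{O(km\delta)}$ over $\W$ to force $c_1=1+O(\sqrt{\psi_k})$. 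This summation trick is where the $\log^9 n$ term and the square root on $\psi_k$ actually enter, not in the switching analysis itself.
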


            As a  byproduct  of our method, we estimate the probability that a given set $K \in \binom Vk$ is a hyperedge in a random $k$-graph with degree sequnce $\dv$, denoted by $P_K(\dv)$. As expected, the approximate probability is `close to' the edge density $\mu$ and increases with the degrees of the vertices in $K$. 
            \begin{proposition} \thlab{Pdense}
        			There exists a real constant $c>0$ such that the following holds. Let $k\geq3$, $d$, $n$, $m$, $\mu$, $\varphi$ and $\D$ be as in \thref{t:denseCase}, and replace the assumptions~\eqref{eq:conds} by
        			$$  \ {kd^{-\varphi}<c },\  k \mu + k^2/n < c \text{ and } \log^3 n <d.  $$ 	
        		Then uniformly for all $\dv \in \D$ and $K \in \binom{V}{k}$, we have
                \bee    
                    P_K(\dv) = \binom{n-1}{k-1}^{-1} \left ( d + \frac{n-1}{n-k}\sum_{v \in K}\left(d_v - d \right) \right)\rel{k^2d^{-2\varphi}}.
                \ee
            \end{proposition}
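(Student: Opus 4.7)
The plan is to express $P_K(\dv)$ as a ratio of hypergraph counts and analyse it via the enumeration machinery developed for \thref{t:denseCase}. A $k$-graph with degree sequence $\dv$ that contains $K$ as an edge corresponds bijectively, via deletion of $K$, to a $k$-graph with degree sequence $\dv - \mathbf{1}_K$ and one fewer edge, where $\mathbf{1}_K \in \{0,1\}^n$ is the indicator vector of $K$. In our probabilistic language,
$$P_K(\dv) \;=\; \frac{m}{\binom nk - m + 1}\cdot\frac{\pr_{\mathcal D_k(n,m-1)}(\dv - \mathbf{1}_K)}{\pr_{\Dknm}(\dv)}.$$
The sequence $\dv - \mathbf{1}_K$ lies in the analogous set $\D$ for parameters $(k, m-1)$, with new average $d' = d - k/n$: this is routine from $|d_v - d| \leq d^{1-\varphi}$ together with $\log^3 n < d$, so the underlying techniques of \thref{t:denseCase} are applicable.

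Once the ratio has been reduced to the conditioned binomial model via~\eqref{eq:condbinom}, the main computation is a careful Taylor expansion. One has
$$\frac{\pr_{\mathcal B_k(n,m-1)}(\dv - \mathbf{1}_K)}{\pr_{\Bknm}(\dv)} \;=\; \frac{\binom{nN}{km}}{\binom{nN}{km-k}}\prod_{v \in K}\frac{d_v}{N - d_v + 1},\qquad N = \binom{n-1}{k-1}.$$
Writing $d_v = d + \delta_v$ with $|\delta_v| \leq d^{1-\varphi}$, the product over $K$ expands as $(d/(N-d))^k(1 + \frac{1}{d(1-\mu)}\sum_{v\in K}\delta_v + O(k^2 d^{-2\varphi}))$, using the identity $1/d + 1/(N-d) = 1/(d(1-\mu))$; the large-binomial ratio contributes $((N-d)/d)^k(1 + O(k^2/(nd)))$. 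The exponential adjustment entering from the $\sigma^2$-term is computed via the identity
$$\sigma^2(\dv - \mathbf{1}_K) - \sigma^2(\dv) \;=\; -\frac{2}{n}\sum_{v \in K}\delta_v + \frac{k(n-k)}{n^2},$$
and contributes an additional factor $1 + \frac{k-1}{(n-k)d(1-\mu)}\sum_{v\in K}\delta_v + O(k^2/(nd))$. Summing the two coefficients of $\sum_{v \in K}\delta_v$ yields $\frac{1}{d(1-\mu)}\bigl(1 + \frac{k-1}{n-k}\bigr) = \frac{n-1}{(n-k)d(1-\mu)}$, and multiplying by the overall prefactor and using $1 - \mu = (N-d)/N$ collapses the $\sum_{v\in K}\delta_v$ contribution to $\frac{n-1}{(n-k)N}$, while the constant-in-$\dv$ part reduces to $d/N$, matching the statement.

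The main obstacle is twofold. First, the hypotheses of Proposition~\ref{Pdense} are strictly weaker than those of \thref{t:denseCase} (they do not imply $k^3 d^{1-3\varphi} < c$), so the theorem's global error $\eta_k$ can exceed the advertised precision $O(k^2 d^{-2\varphi})$. One must therefore use the enumeration machinery behind \thref{t:denseCase} to obtain the ratio $\pr_{\mathcal D}(\dv - \mathbf{1}_K)/\pr_{\mathcal D}(\dv)$ directly, rather than invoking the theorem as a black box: ratios of this form are typically pinned down at finer accuracy than the marginal probabilities themselves, because many of the contributions to $\eta_k$ cancel on taking the ratio. Second, the algebra that produces the clean factor $\frac{n-1}{n-k}$ is delicate: it emerges only after the $\sum_{v\in K}\delta_v$ contributions from the binomial product and from the exponential variance correction are combined via the identity $\frac{k-1}{n-k}+1 = \frac{n-1}{n-k}$, and the bookkeeping of the $O(\mu)$-size corrections arising from the denominators $1-\mu$ must be handled carefully so that they collapse at the required order.
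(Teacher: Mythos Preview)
Your approach is genuinely different from the paper's, and it has a real gap.

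The bijection you invoke is not a bijection onto all $k$-graphs with degree sequence $\dv-\mathbf 1_K$: deleting $K$ lands in those $k$-graphs that do \emph{not} contain $K$, so the correct identity is $\Num_K(\dv)=\Num(\dv-\mathbf 1_K)-\Num_K(\dv-\mathbf 1_K)$ (this is precisely \thref{trick17}). Consequently your displayed formula for $P_K(\dv)$ is missing a factor $1-P_K(\dv-\mathbf 1_K)$. This factor is $1-\mu$ to leading order, and in the dense regime $\mu$ is \emph{not} $O(k^2 d^{-2\varphi})$, so it cannot be absorbed into the error term. Indeed, without it your computation yields constant part $\mu/(1-\mu)$ rather than the correct $\mu=d/N$; you can check that your asserted simplification ``the constant-in-$\dv$ part reduces to $d/N$'' fails for exactly this reason. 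Reinstating the missing factor makes the argument circular, since you then need $P_K(\dv-\mathbf 1_K)$ to the same precision you are trying to establish for $P_K(\dv)$.

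Even setting that aside, the second obstacle you identify is not resolved. The ratio machinery in the paper controls $R_{ab}(\dv)=\Num(\dv-\ve_a)/\Num(\dv-\ve_b)$ for sequences with the \emph{same} total degree; it does not directly give $\Num(\dv-\mathbf 1_K)/\Num(\dv)$, which compares counts at $m-1$ and $m$. Your claim that ``ratios of this form are typically pinned down at finer accuracy'' is a reasonable heuristic, but turning it into a proof under the weaker hypotheses of the proposition would require reworking a substantial part of the argument of \thref{l:ratiosDense}.

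The paper's route is far shorter. The edge probability $P_K$ is one of the three functions ($P,R,Y$) tracked throughout the fixed-point analysis in \thref{l:ratiosDense}; equation~\eqref{eq:PSproof} already asserts $P_K(\dv)=\Pf_K(\dv)(1+O(\vartheta))$ with $\vartheta\le k^2 d^{-2\varphi}$, and the proposition follows by substituting $\mu=d\binom{n-1}{k-1}^{-1}$, $\alpha=\frac{k-1}{n-1}$ and $\eps_K=\sum_{v\in K}(d_v-d)/d$ into the explicit formula for $\Pf_K$ in~\eqref{eq:rf}. No passage through $\mathcal D_k(n,m-1)$, no $\sigma^2$-bookkeeping, and no appeal to the global error $\eta_k$ is needed.
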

            
            The proposition follows from the proof of \thref{l:ratiosDense} which allows slightly weaker assumptions on $d$ and $k$ than~\eqref{eq:conds}. We include the proof in Section~\ref{sec:densecase}. Analogously, we establish a more precise approximation for $k=3$, cf.~\eqref{eq:rf3} and~\eqref{eq:PSproof}, as well as for the edge probability $P_K(\dv)$ in the sparse range of~\thref{t:sparseCase}, cf.~\eqref{eq:Psparse}.  Finally, approximations for the probability that a random $k$-graph with degree sequence $\dv$ contains a specific two-edge path (consisting of two edges sharing $k-1$ vertices) can be deduced from~\eqref{eq:PSproof}. 
            
            \subsubsection*{Existing results and methods}
             We now give   a more detailed context for our results. The strongest theorem in the sparse regime is due to Greenhill and Blinovsky~\cite{bg16lin}. They enumerate $k$-graphs with degree sequence $\dv$ satisfying $k^3  \Delta(\dv)^3 = o(m)$. This condition implies $k^2 \Delta(\dv)^2 + k^3\Delta(\dv) = o(m)$, which is the bottleneck in~\thref{t:sparseCase} assuming a mild lower bound on $m$. The methods of~\cite{bg16,bg16lin,cfmr96,dfrs13} are based on two key ideas  which were initially developed for graphs --- the configuration model combined with switchings. 
             
             On the other hand, the enumeration formula of Kuperberg, Lovett and Peled~\cite{klp17} covers $d$-regular $k$-graphs with $kn^C \leq d \leq \frac 12 \binom {n-1}{k-1}$, where $C>0$ is an unspecified constant. In the range where both our results and those of~\cite{klp17} are valid, we have checked that~\eqref{eq:regularcount} asymptotically coincides with their formula. %
         Regular hypergraphs are actually a special case of a more general result from~\cite{klp17} showing the existence of various regular structures, such as designs and orthogonal arrays. The method used bears a strong relationship with the saddlepoint integral method first applied to enumeration of dense regular graphs in~\cite{mw90} and further developed in a number of papers by McKay and others.

       Finally, the reader may wonder whether enumeration results of bipartite graphs can be of use since a $k$-uniform hypergraph $H$ naturally corresponds to a bipartite graph $B_H$ on the vertex set $V_H \cup E_H$ in which each vertex of $E_H$ has degree $k$. 
      However, in this bipartite graph,         no two elements of   $E_H$ have the same set of neighbours  (as $H$ does not contain a multiple edge). Furthermore, $|E_H|$ can be as large as $\binom{|V_H|}{k}$, which means that the bipartite graphs have very disparate degrees. The existing enumeration results on bipartite graphs do not cover either of those aspects.

          \subsection*{Our method}
        The proof of Theorem~\ref{t:denseCase} follows the approach developed by Liebenau and Wormald~\cite{lw17}.  We will now informally describe several key insights. Let $\pr={\pr }_{\Dknm}$ denote the probability measure in $\Dknm$.  Firstly, it is sufficient to know the ratios $\pr (\dv) / \pr (\dw)$ for $\dv, \dw$ in the domain of interest $\D$ (which contains \emph{nearly regular} sequences with sum $nd$). However, to see that  it suffices to restrict our attention to $\D$, one has to check that $\D$ comprises most of the probability space $\Dknm$ and that the ratios $\pr(\dv)/ \pr(\dv')$  are \emph{typically} very close to the corresponding ratio in $\Bknm$, for which concentration results are crucial.
        
        Let us call sequences $\dv$ and $\dv'$ \emph{adjacent} if $\dv' = \dv - \ea + \eb$ for some canonical-basis vectors $\ea$ and $\eb$. Our argument relies on computing the ratios only for pairs of adjacent sequences and comparing them to the ratios in $\Pr_{\Bknm}$. Namely, for $\dv-\ea$ and $\dv-\eb$ in $\D$,  we have
             $$
             \frac{\pr_{\Bknm}(\dv-\ea)}{\pr_{\Bknm} (\dv- \eb)} = \frac{d_a \left(\binom{n-1}{k-1}-d_b+1 \right) }{d_b \left(\binom{n-1}{k-1}-d_a+1 \right) }
             $$
             and we will establish  (with a `very small' error $\eps$) that 
            \bee
                \frac{{\pr }_{\Dknm}(\dv- \ea)}{{\pr }_{\Dknm}(\dv- \eb)} =  \frac{d_a \left(\binom{n-1}{k-1}- d_b +1\right)}{d_b \left( \binom{n-1}{k-1} -d_a +1 \right)}\exp\left( \frac{(k-1) (d_a-d_b)}{d(1-\mu)(n-k)} \right)  \rel{\eps}.
            \ee 
            The final formula~\eqref{eq:degseqformula} is deduced by comparing the two approximations.
        
        	 Finally, to compute the ratios between adjacent degree sequences, we develop combinatorial relations which, when iterated, allow us to estimate the ratios with increasing accuracy.  Still, to reach a sufficiently high accuracy, one would need roughly $\log (nd)$ iterations of the recursive relations, so fixed point arguments are used instead to `short-circuit' the iterations. The above-mentioned edge probabilities $P_K(\dv)$ are approximated in this part of the argument, as they are closely related to the ratios $\pr(\dv - \ea) / \pr(\dv - \eb)$.
        	 
  Compared to the graph case~\cite{lw17}, the combinatorial relations become more complicated when $k > 2,$ requiring some care to resolve. Furthermore, iterating the relations and identifying the negligible terms in the iterations is different when $k$ grows (quite quickly).

\subsection*{A conjecture}
{The   comparison of $\Dknm$ with the space  $\Bknm$ of conditioned independent binomials in Theorem~\ref{t:main}} coincides with the above-mentioned work for {the graph case, i.e.~when} $k=2$,  however the distribution of the degree of a single vertex of $\cH_k(n, m)$ is    hypergeometric with parameters ${n\choose k},m, {n-1 \choose k-1}$,
since $m$ edges are chosen uniformly at random out of $\binom nk$ $k$-element subsets of $[n]$, and $\binom{n-1}{k-1}$ of those sets contribute to $d_v$. 
This suggests using the probability space which is a sequence of $n$ variables with this {hypergeometric} distribution, conditioned on their sum being $km$. We denote this by $\cT_k(n, m)$. We use {the conditioned binomial probability space} in the present work because the formula \eqn{eq:condbinom} is simpler than the corresponding one for  $\cT_k(n, m)$, and {these two spaces} are asymptotically equal for typical sequences for $k$ up to about $\sqrt n$. {However, for larger values of $k$, the asymptotic equivalence of $\Bknm$  with $\cT_k(n, m)$ no longer holds,} and we conjecture that the binomial model property already shown for $k=2$ in~\cite{lw17} extends in the following way to all $k$.

  \begin{conj} \thlab{c:main}
   Let $k, n$ and $m$ be integers with $2 \leq k \le n-2$ and $\min\{  m, \binom nk-m\}=\omega(1) \log n  $.   Then there exists a set $\W$ that has probability $1-O \left(n^{-\omega(1)}\right)$ in both $\cT_k(n, m)$ and $\Dknm$, such that uniformly for all $\dv \in \W$ 
$$  
                \pr_{\Dknm}(\dv)  =  \pr_{\cT_k(n, m)}(\dv) (1 + o(1)).
$$
            \end{conj}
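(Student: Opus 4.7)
The plan is to combine \thref{t:main} with two complementation bijections and a new analysis for the remaining middle range of $k$. The case $k = 2$ is the main result of~\cite{lw17}, so assume $k \geq 3$. Write $N = \binom{n-1}{k-1}$ and $M = \binom{n-1}{k}$. From~\eqref{eq:condbinom} and the product-hypergeometric PMF,
$$
\frac{\pr_{\cT_k(n,m)}(\dv)}{\pr_{\Bknm}(\dv)} = C_{k,n,m} \prod_{v \in [n]} \binom{M}{m - d_v}
$$
for a normalising constant $C_{k,n,m}$ independent of $\dv$. The linear term of the Taylor expansion of $\sum_v \log\binom{M}{m-d_v}$ around $d_v = d$ vanishes because $\sum_v (d_v - d) = 0$, and the quadratic term equals $-\tfrac{1}{2} n\sigma^2(\dv) \cdot M/[(m-d)(M-m+d)]$. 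On $\W$ one has $\sigma^2(\dv) = d(1-\mu)(1 + O(n^{-1/2}))$ by standard concentration, so the $\dv$-dependent variation in this term is $O(k/\sqrt{n})$. Thus for $k = o(\sqrt{n})$, $\pr_{\cT_k(n,m)}(\dv) = \pr_{\Bknm}(\dv)(1+o(1))$ uniformly on $\W$, and \thref{t:main} yields the conjecture for $3 \leq k < n^{1/9}$ and $\sqrt n < m < c\binom nk / k$.

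Two complementation bijections extend the parameter range. Edge-complementation $H \mapsto \binom{V}{k} \setminus H$ is a bijection $\Hknm \to \cH_k(n, \binom{n}{k} - m)$ sending $\dv$ to $N\mathbf{1} - \dv$; both $\Dknm$ and $\cT_k(n,m)$ transform equivariantly, so the conjecture for $m \leq \binom nk/2$ implies the case $m > \binom nk/2$. Vertex-complementation $H \mapsto \{V \setminus e : e \in H\}$ bijects $k$-graphs on $[n]$ with $(n-k)$-graphs on $[n]$ (keeping $m$ fixed), sending the degree sequence $\dv$ to $m\mathbf{1} - \dv$, and intertwines $\cT_k(n,m)$ with $\cT_{n-k}(n,m)$. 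Combined with the first step, this yields the conjecture for $k \in [n - n^{1/9}, n-2]$ as well.

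The remaining case $n^{1/9} \leq k \leq n - n^{1/9}$ is the principal obstacle. Here no complementation reduces to the range of \thref{t:main}, and for $k = \omega(\sqrt n)$ the binomial and hypergeometric models differ even on typical sequences, so one must work with $\cT_k(n,m)$ from the outset. One route is to extend the switching and fixed-point machinery of this paper so as to derive
$$
\frac{\pr_{\Dknm}(\dv-\ea)}{\pr_{\Dknm}(\dv-\eb)} = \frac{d_a(N-d_b+1)(M-m+d_a)(m-d_b+1)}{d_b(N-d_a+1)(m-d_a+1)(M-m+d_b)} \,(1+o(1)),
$$
which is exactly the adjacent-sequence ratio in $\cT_k(n,m)$. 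The technical obstacle is that the $k$-dependent error terms in~\thref{t:denseCase} and~\thref{t:sparseCase} blow up well before $k = n^{1/9}$, so the error analysis must be redesigned to remain uniform in $k$ and to isolate the additional higher-order terms that cancel only under the hypergeometric rather than binomial normalisation. A complementary route is to adapt the saddlepoint method of~\cite{klp17}, which handles dense regular $k$-graphs for $k$ at least a sufficiently large constant, to the irregular case and to match $\cT_k(n,m)$ directly. I expect the uniform-in-$k$ switching analysis to be the principal difficulty, and a full proof of the conjecture may well require combining both methods.
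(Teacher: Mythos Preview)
The statement you are addressing is labelled a \emph{conjecture} in the paper, and the authors present it precisely because they do not have a proof; there is no argument in the paper to compare yours against. Your proposal is not a proof either: you yourself call the range $n^{1/9}\le k\le n-n^{1/9}$ ``the principal obstacle'' and offer only two speculative routes for it, neither of which you carry out. What you have written is a research plan, not a proof, and it should not be presented as one.

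Even as a plan there are gaps you do not flag. Your reduction to \thref{t:main} covers only $\sqrt n<m<\tfrac{c}{k}\binom nk$; edge complementation then handles $m>\binom nk-\tfrac{c}{k}\binom nk$, but the interval $\tfrac{c}{k}\binom nk\le m\le\tfrac12\binom nk$ remains uncovered for every fixed $k\ge 3$ (the paper appeals to \cite{klp17} for that range only when $k$ exceeds some large unspecified constant, and only for regular sequences). The very sparse range $\omega(1)\log n<m\le\sqrt n$ permitted by the conjecture's hypothesis is also untouched. Finally, your Taylor comparison of $\cT_k(n,m)$ with $\Bknm$ is only a sketch: you control the quadratic term but do not bound the higher-order remainder, and you do not verify that the normalising constant $C_{k,n,m}$ behaves as required. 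These are likely tractable for $k<n^{1/9}$, but they are not done.
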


            \subsection*{Structure of the paper}  Section~\ref{sec:preliminaries} contains some prerequisite results. In Section~\ref{sec:recursions} we establish recursive relations which are used in both proofs. {Theorem~\ref{t:sparseCase} is proved in Section~\ref{sec:sparsecase}, and Theorem~\ref{t:denseCase} in Sections~\ref{sec:operators} and~\ref{sec:densecase}. At the end of Section~\ref{sec:densecase}  we deduce Theorem~\ref{t:main}.}

          \section{Preliminaries}      \label{sec:preliminaries}
            \subsection{Notation}
            {For later reference, we list here some of the symbols defined throughout the paper, with their interpretations.}\\ 

            \renewcommand{\arraystretch}{1.3}
            \begin{tabularx}{\textwidth}{ll}
                      \hline
                      {Symbol} & {\centering Meaning or value} \\
                      \hline 
                      $V=[n]$ & the set $\{1, 2, \dots, n \}$\\
                      $\Hknm$ & random $k$-uniform hypergraph on $m$ edges \\
                      $\Dknm$ & degree sequence of $\Hknm$\\
                      $\Bknm$ & sequence of $n$ independent binomials conditioned on $\sum d_v = km$ \\
                      $d$ & $mn / k$ \\
                      $\mu$ & $m \binom nk ^{-1}$\\
                      $q$ & $k-1$ \\
                      $\alpha$ & $\frac{k-1}{n-1} = \frac{q}{n-1}$ \\
                      $\varphi \in \left(\frac 49, \frac 12 \right)$ & bound on the degree spread, so that $|d_v-d| < d^{1-\varphi}$ \\
                      $\eps_v$ & relative degree {deviation} $(d_v-d)/d$ \\
                      $\Delta(\dv)$ & $\max_v d_v$ \\
                      $M_1(\dv)$ & $\sum_v d_v$ \\
                      $\sigma^2(\dv)$ & $(\sum_v (d_v-d)^2)/n$ \\
                     $\psi_3$ & $\frac {1}{m} \left( \Delta_* ^{3} + \log^9 n \right)$\\
                     $\psi_k$ for $k \geq 4$ & $  \frac {k^2}{m} \left( \Delta_* ^2 + k \Delta_* \log^4 n +   k^2 \log^9 n \right)$\\
                     $ \eta_3 $ & $
                        \frac{ {\log^2 n} }{\sqrt{n}} + \frac{d^{2-4\varphi}}{n} + d^{1 - 3 \varphi} $\\
                        $\eta_k$ for $k\geq 4$ & $\frac{k^2 {\log^2 n} }{\sqrt{n}} + (\mu n +k)k^2 d^{1-3 \varphi}, \rule{0cm}{7mm}$   \\

                      \hline
            \end{tabularx}
                \renewcommand{\arraystretch}{1}

                Let $\Num(\dv; k, n)$ be the number of $k$-graphs on vertex set $[n]$ with a given degree sequence $\dv \in \Z^n$. Recall that we are interested in the asymptotic behaviour of $\Num(\dv;k,n)$ as $n \rightarrow \infty$. The parameters $k$, $m$ and $d$ are functions of $n$ and denote the edge size, number of edges and average degree of $k$-graphs under consideration, respectively. It is occasionally convenient to use $q = k-1$ instead of $k$.  
                In some of our notation, the dependence on $k$ and $n$ are suppressed, for instance we write $\Num(\dv) = \Num(\dv; k, n)$.
                Recall the definitions of the probability spaces $\Hknm, \Dknm$ and $\Bknm$ from the introduction.

                Given $\dv \in \Z^n$ with average $d = km/n$, we call $\max_{v \in [n]}{|d_v - d| }$ the \emph{spread} of $\dv$. Recall that $\sigma^2 (\dv) = \frac 1n \sum_{v=1}^n(d_v-d)^2$. 
                        
                Let $\ve_\uh = \sum_{a \in K} \ve_a$ for an arbitrary set $K \subset V$, where $\ea$ is the canonical basis vector in direction $a$. Usually, $\uh$ will be a $k$-element set, so that $\dv - \ve_{\uh}$ is the sequence obtained by perturbing $\dv$ in $k$ components.
                 Moreover, we define the norm $\Lh(\dv) = \max \left \{ L^{\infty}(\dv), \frac 1k L^{1}(\dv) \right \}$ on $\Z^n$ and the corresponding $\Lh$-distance. This metric is useful in our problem since for $K \subset V$ with $|K|  = k$, we have $\Lh(\dv, \dv - \ve_{K}) = \Lh(\ve_K, {\bf 0}) =1$.%
                 
                  Recall that our asymptotic statements are with respect to $n \to \infty$. We write    $f\sim g$ if $f/g\to 1$,    $f=O(g)$ if $|f|\le Cg$ for some constant $C$,  and $f=o(g)$ if $f/g\to 0$.  We use $\omega(1)$ to mean a function going to infinity, possibly different in all instances.                    For a given set $S$ and a positive integer $t$, we define $\binom{S}{t}$  to be the set of all $t$-element subsets of $S$.

                    To prove that an asymptotic relation holds uniformly  when $\dv_n$ is in some well defined finite set $S(n)$ for all $n$, it is enough if we can conclude that the relation is valid for any particular   sequence $\{\dv_n\}$ with $\dv_n\in S(n)$ for all $n$. This is because we may then apply this conclusion to the sequence of $\dv_n\in S(n)$   chosen for each $n$ with the `worst' error term over all possible $\dv \in S(n)$. In this paper, when we prove that an estimate holds uniformly, one can either verify at each step that the estimates are uniform, or take the viewpoint that a fixed sequence $\{\dv_n\}$ is under examination. The claims are sometimes easier to verify from the latter viewpoint.

             \subsection{Combinatorial preliminaries}
                A sequence $\dv = (d_v)_{v \in [n]}$ is \emph{k-graphical} if there is a $k$-graph with degree sequence $\dv$. Corollary 2.2 from~\cite{bef} is sufficient for our purposes. 
                \begin{thm} \thlab{thm:bef}
                Let $\dv \in \Z^n$ be a sequence with maximum component $\Delta$ such that $\sum_{v \in [n]} d_v = dn \equiv 0 \pmod{k}$. If there is an integer $p$ satisfying $\Delta \leq \binom{p-1}{k-1}$ and $nd \geq (\Delta-1)p+1$, then~$\dv$ is $k$-graphical.
                \end{thm}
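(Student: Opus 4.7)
The plan is a two-stage argument: first exhibit a $k$-uniform multi-hypergraph realising $\dv$, then eliminate all edge multiplicities via local switchings to obtain a simple $k$-graph. The multi-hypergraph stage is easy: since $\Delta \leq \binom{p-1}{k-1}$ forces $p \geq k$ (as $\binom{p-1}{k-1}\geq 1$ whenever $\Delta\geq 1$), the inequality $nd \geq (\Delta-1)p+1$ yields $km \geq (\Delta-1)k+1$ and hence $m \geq \Delta$, so the $m = nd/k$ edges can be placed sequentially by picking any $k$-subset among vertices with positive residual demand.

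The second stage is a switching procedure. While some edge $e$ has multiplicity at least two, choose $v \in e$, a third edge $f$ with $v \notin f$, and a vertex $u \in f$, and replace one copy of $e$ by $(e \setminus \{v\}) \cup \{u\}$ and $f$ by $(f \setminus \{u\}) \cup \{v\}$. The degree sequence is preserved. With a potential function such as $\sum_{e} \binom{\mathrm{mult}(e)}{2}$, which strictly decreases whenever the two produced edges are genuinely new (not already present in the multi-hypergraph and not equal to each other), the iteration eventually terminates at a simple $k$-graph realising $\dv$.

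The main obstacle is the existence of a valid switching whenever multiplicities remain. For a fixed duplicated edge $e$ and vertex $v \in e$, one must bound the number of \emph{forbidden} $(u,f)$ pairs — those for which at least one of the two new edges collides with an existing edge — and show it is strictly smaller than the total supply. The hypothesis $\Delta \leq \binom{p-1}{k-1}$ limits, at any vertex $w$, the number of distinct $(k-1)$-subsets already serving as a neighbourhood of $w$, and so bounds the chance that the second produced edge collides with an existing one. The hypothesis $nd \geq (\Delta-1)p+1$ underwrites, via a double count of incidences concentrated on the top-$p$ vertices, the existence of enough safe third edges $f$ outside the danger zone. Converting these heuristics into the required strict inequality — carefully counting forbidden $(u,f)$ pairs against the two hypotheses and verifying that a valid move exists for every remaining multiplicity — is where the bulk of the technical work would sit.

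As an alternative route, one could induct directly on $m$ by peeling one carefully chosen edge $e$ at each step and applying the hypothesis to the reduced sequence $\dv - \ve_e$. The difficulty there is to maintain the pair of inequalities under edge removal: decrementing by an edge costs $k$ on the left of $nd \geq (\Delta-1)p+1$, which may force shrinking $p$ and then re-verifying $\Delta \leq \binom{p-1}{k-1}$, a more fragile bookkeeping than the switching analysis above.
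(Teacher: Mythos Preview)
The paper does not prove this theorem at all: it is quoted verbatim as Corollary~2.2 of Behrens, Erbes, Ferrara, Hartke, Reiniger, Spinoza and Tomlinson~\cite{bef}, and is used only as a black box to derive Proposition~\ref{prop:kgraphical}. So there is no ``paper's own proof'' to compare against; what you have written is an independent attempt at a result the authors import from elsewhere.

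As for the attempt itself, you are candid that the core step is missing: the counting that shows a valid switching move always exists is left as ``where the bulk of the technical work would sit,'' and neither hypothesis is actually deployed in a concrete inequality. That is the main gap. But there is also a gap in the first stage. The greedy rule ``pick any $k$-subset among vertices with positive residual demand'' can fail: nothing prevents the residual demand from concentrating on fewer than $k$ vertices before it is exhausted (e.g.\ after a few careless choices you can be left with residual sequence $(3,0,\ldots,0)$ when $k=3$). You would at minimum need to always pick the $k$ vertices of largest residual demand, and then argue that $m\ge\Delta$ keeps the process alive; this is itself a small lemma, not a one-liner. For the record, the proof in~\cite{bef} does not go via multi-hypergraphs and switchings; it proceeds through a structural characterisation of $k$-graphic sequences, and the parameter $p$ is the size of an auxiliary vertex set, so the bound $\Delta\le\binom{p-1}{k-1}$ has a direct combinatorial meaning there that your switching heuristic does not obviously capture.
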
 %
        The following proposition contains two immediate consequences for nearly regular hypergraphs and for sparse hypergraphs.
                \begin{proposition} \label{prop:kgraphical}  
        			Let $n$ and $k$ be integers such that $20k<n$. 
        			Let $\dv \in \Z^n$ be a sequence with $\sum_{v \in [n]} d_v = nd \equiv 0 \pmod{k}$.
                    \begin{enumerate}[(i)]
                        \item If  $\Delta (\dv) \leq \left(1 + \frac{1}{3k} \right)  d \leq \frac 14 \binom {n-1}{k-1}$  and $d \geq 1$, then $\dv$ is $k$-graphical.
                        \item If 
        	${ k \Delta(\dv)^{1 + 1/(k-1)}} \leq  \frac 12 dn$,
        			then the sequence $\dv$ is $k$-graphical.
                    \end{enumerate}
                \end{proposition}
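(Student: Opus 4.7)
The proof strategy for both parts is to apply Theorem~\thref{thm:bef} with a suitably chosen integer $p$, verifying both $\Delta \leq \binom{p-1}{k-1}$ and $nd \geq (\Delta-1)p + 1$.

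For part (i), I take $p = \lfloor nd/\Delta \rfloor$ (with $\Delta = 0$ handled by the empty hypergraph). Since $\Delta \leq (1+1/(3k))d$, this gives
\[
p \geq \lfloor 3kn/(3k+1) \rfloor \geq n - n/(3k+1) - 1,
\]
and together with $n \geq 20k$ one checks $p \geq 1$. The inequality $(\Delta-1)p + 1 \leq nd$ then follows at once from $p\Delta \leq nd$ and $p \geq 1$, since $(\Delta-1)p + 1 = p\Delta - p + 1 \leq nd$. The nontrivial task is to verify $\binom{p-1}{k-1} \geq \Delta$, and for this I exploit the stronger hypothesis $\Delta \leq \tfrac{1}{4}\binom{n-1}{k-1}$. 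Writing the ratio $\binom{p-1}{k-1}/\binom{n-1}{k-1}$ as $\prod_{i=0}^{k-2}(p-1-i)/(n-1-i)$ and using $n-p \leq n/(3k+1)+1$ together with $n-1-i \geq n-k+1 \geq 19n/20$ (from $n \geq 20k$), each factor is bounded below by $1 - c/k$ for an absolute constant $c < 1/2$. Taking the product over the $k-1$ factors yields a lower bound of $\exp(-O(1)) > 1/4$, so $\binom{p-1}{k-1} \geq \tfrac{1}{4}\binom{n-1}{k-1} \geq \Delta$ as required.

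For part (ii), I take $p = \lceil (k-1)\Delta^{1/(k-1)}\rceil + 1$. The standard estimate $\binom{p-1}{k-1} \geq ((p-1)/(k-1))^{k-1}$ then immediately gives $\binom{p-1}{k-1} \geq \Delta$. For the other condition, $p \leq k\Delta^{1/(k-1)} + 1$, so
\[
(\Delta-1)p + 1 \leq k\Delta^{1+1/(k-1)} + \Delta + 1.
\]
The hypothesis bounds the first summand by $\tfrac{1}{2}nd$. Moreover, from $k\Delta \leq k\Delta^{1+1/(k-1)} \leq \tfrac{1}{2}nd$ we obtain $\Delta \leq nd/(2k) \leq nd/4$ for $k \geq 2$, which absorbs $\Delta + 1$ into $nd$ with room to spare. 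The case $\Delta = 0$ is trivial.

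The main obstacle is the binomial-ratio calculation in part (i): the constant $\tfrac{1}{4}$ in the hypothesis is precisely what is needed to offset the product $\prod_{i=0}^{k-2}(p-1-i)/(n-1-i)$, and the condition $n \geq 20k$ is what keeps $(n-p)/(n-k+1)$ below a suitable multiple of $1/k$ so that the product stays bounded away from zero uniformly in $k$.
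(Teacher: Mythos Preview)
Your proof is correct and follows the same strategy as the paper: both parts apply \thref{thm:bef} with a suitable integer $p$ and verify the two required inequalities. In part~(ii) your choice $p=\lceil (k-1)\Delta^{1/(k-1)}\rceil+1$ is essentially the paper's $p=\lceil k\Delta^{1/(k-1)}+1\rceil$, and the verifications are virtually identical.

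In part~(i) there is a minor difference worth noting. The paper takes $p=\lceil(1-1/(2k))n\rceil$, a function of $n$ and $k$ only, which makes the binomial estimate clean (each factor $(p-j)/(n-j)\ge 1-1/k$, so the product is at least $(1-1/k)^{k-1}\ge e^{-1}>1/4$) but requires a short computation for $(\Delta-1)p+1<nd$. Your choice $p=\lfloor nd/\Delta\rfloor$ reverses this trade-off: the condition $(\Delta-1)p+1\le nd$ is immediate from $p\Delta\le nd$, while the binomial ratio needs the extra step of bounding $n-p\le n/(3k+1)+1$ before estimating the product. Both routes land on the same inequality $\prod_i(p-1-i)/(n-1-i)\ge 1/4$, and your constant $c=23/57<1/2$ is enough since $(1-c/k)^{k-1}\ge e^{-c-c^2/2}>e^{-5/8}>1/4$. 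The phrase ``$\exp(-O(1))>1/4$'' in your write-up should be replaced by this explicit bound, but the argument is sound.
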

                \begin{proof}
                    \begin{enumerate}[(i)] 
                        \item Let $p = \left \lceil \left( 1 - \frac{1}{2k} \right)n \right \rceil $. Then
        			$$p(\Delta(\dv)-1)+1  < nd \left(1 - \frac{1}{2k} + \frac 1n \right)\left(  1+ \frac {1}{3k} \right) +1 < nd \left(1 - \frac{1}{6k} + \frac{3}{n} \right) <nd,$$
        			using the assumption on $\Delta(\dv)$ and $20k<n.$
        		To check the second condition of~\thref{thm:bef}, we first note that $\frac{p-k+1}{n-k+1}= 1 -\frac{n-p}{n-k+1} \geq 1 - \frac 1k$ for $n> 2k.$ 
        		Therefore,
                        $$\binom{p-1}{k-1} = \binom{n-1}{k-1}\cdot \frac{(p-1)(p-2) \cdots (p-k+1)}{(n-1)(n-2) \cdots (n-k+1)}
                        \geq \binom{n-1}{k-1} \left(1 -\frac{1}{k} \right)^{k-1} \geq 4\Delta \cdot e^{-1} > \Delta.
                        $$
                        Hence, $\dv$ is $k$-graphical by \thref{thm:bef}.
                        \item  Let $\Delta = \Delta(\dv)$ and $ p = \left \lceil k\Delta^{1/(k-1)}+1 \right \rceil$. Since $(\Delta - 1)p + 1 < 2k \Delta^{1 + 1/(k-1)} \leq dn$ and
        		$$\binom{p-1}{k-1} \geq \left(\frac{k\Delta^{ \frac{1}{k-1}}}{k-1} \right)^{k-1} > \Delta,$$
              \thref{thm:bef}  implies that  $\dv$ is $k$-graphical. \qedhere
                    \end{enumerate}
                \end{proof}
                        
        			For a $k$-graphical sequence $\dv \in \Z^n$ and $\uh \in \binom{[n]}{k}$, let	$P_{\uh}(\dv)$ denote the probability that a random $k$-graph with degree sequence $\dv$ contains the edge $\uh$. We can bound $P_{\uh}(\dv)$  in a simple way using the following switching argument, which is an analogue of the standard graph switching. Denote $\Pid{\uh}=\Pid{\uh}(\dv) = \prod_{v \in \uh} d_v$. Intuitively, the bound given here is reasonable since in the $d$-regular case, the probability of an edge in a given position is   $m/n\choose {q+1}$   by symmetry. Noting that $n=km/d$ gives a formula similar to that below. To match the applications, the following lemma will be stated for $(q+1)$-uniform hypergraphs, instead of $k$-uniform hypergraphs. 
        		\begin{lemma}\thlab{l:simpleSwitching}
        				Let $\dv$ be a  $(q+1)$-graphical  sequence of length~$n$ with $\sum_{v \in V} d_v=(q+1)m$ and $\Delta(\dv) = \Delta$. Assume $\left((q+1)^2 + \Delta^{1/q} \right) \Delta  \leq \frac 18 m $. Then for any $\uh \in \binom{V}{q+1}$ we have
        			$$
         			P_{\uh}(\dv)\le \frac{2 q!\Pid{\uh}(\dv) }{(m(q+1))^q}. 
        			$$
        		\end{lemma}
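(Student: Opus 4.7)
The plan is a switching argument, a hypergraph analogue of McKay's classical graph pair-swap. Given $H$ with $K = \{v_1,\dots,v_{q+1}\} \in H$, a \emph{forward switch} picks distinct vertices $u_1,\dots,u_{q+1}$ and edges $L_i \in H\setminus\{K\}$ with $u_i \in L_i$ and $v_i \notin L_i$; it then deletes $K,L_1,\dots,L_{q+1}$ and inserts $K^* := \{u_1,\dots,u_{q+1}\}$ and $L_i^* := L_i - u_i + v_i$. A direct check shows every vertex degree is preserved: $v_i$ trades $K$ for $L_i^*$, $u_i$ trades $L_i$ for $K^*$, and every other vertex is untouched. The switch is \emph{valid} when the $q+2$ new edges are pairwise distinct and disjoint from $H\setminus\{K,L_1,\dots,L_{q+1}\}$, so that the result $H'$ is a simple $(q+1)$-graph with degree sequence $\dv$ and $K\notin H'$. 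The reverse switch from $H'\not\ni K$ picks an edge $K^* \in H'$, an ordering of its vertices as $(u_1,\dots,u_{q+1})$, and edges $L_i^* \in H'$ with $v_i \in L_i^*, u_i \notin L_i^*$; valid forward and reverse switches are in bijection.

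Writing $f(H)$ and $r(H')$ for the numbers of valid forward and reverse switches, the bijection yields $\sum_{H\ni K} f(H) = \sum_{H'\not\ni K} r(H')$, so $P_K(\dv)/(1-P_K(\dv)) \le r_{\max}/f_{\min}$. Dropping all but the structural constraints, one gets $r(H') \le m(q+1)!\prod_i d_{v_i} = m(q+1)!\Pid{K}(\dv)$ from $m$ choices of $K^*\in H'$, $(q+1)!$ orderings of its vertices, and at most $d_{v_i}$ choices of $L_i^*$ per index. For $f(H)$, the unconstrained count of tuples $(u_i,L_i)$ satisfying $u_i \in L_i \in H\setminus\{K\}$ and $v_i \notin L_i$ is $\prod_i (q+1)(m-d_{v_i})$, at least $(7/8)((q+1)m)^{q+1}$ under the hypothesis $(q+1)^2\Delta \le m/8$ (since $\prod_i(1-d_{v_i}/m) \ge 1-(q+1)\Delta/m \ge 7/8$). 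If the invalid tuples comprise at most one third of this, then $f_{\min} \ge \tfrac12((q+1)m)^{q+1}$, and
$$P_K(\dv) \;\le\; \frac{r_{\max}}{f_{\min}} \;\le\; \frac{m(q+1)!\,\Pid{K}(\dv)}{\tfrac12\bigl((q+1)m\bigr)^{q+1}} \;=\; \frac{2q!\,\Pid{K}(\dv)}{(m(q+1))^q}.$$

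The main technical step, and the only real obstacle, is bounding the invalid tuples. These fall into the cases: (i) $L_i=L_j$ for some $i\ne j$, bounded via $\sum_L |L|^2 \le (q+1)^2 m$, a fraction $O((q+1)^2/m)$ of the total; (ii) $u_i=u_j$, bounded via $\sum_u d_u^2 \le (q+1)m\Delta$, a fraction $O((q+1)^2\Delta/m)$; (iii) $K^*\in H$, bounded by $m(q+1)!\Delta^{q+1}$, which is a fraction $\le q!\Delta^{q+1}/((q+1)^q m^q) \le (\Delta^{1+1/q}/m)^q \le 8^{-q}$; and (iv) $L_i^*=L_j^*$ or $L_i^*\in H$, handled by analogous codegree sums since such a coincidence forces $L_i$ to share a $q$-subset with a specified edge, and any $q$-subset has codegree at most $\Delta$. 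The hypothesis $\Delta((q+1)^2 + \Delta^{1/q}) \le m/8$ was tailored precisely so that cases (i), (ii), (iv) are dominated by the $(q+1)^2\Delta$-term and (iii) by the $\Delta^{1+1/q}$-term, keeping the total bad fraction safely below $1/3$ and completing the proof.
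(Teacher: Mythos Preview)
Your switching is not the one in the paper, and your case (iv) does not close. Focus on the sub-case $L_i^*\in H$ for a single index $i$. The edge $L_i^*=L_i-u_i+v_i$ is \emph{not} a specified edge: it may be any edge $E\in H$ with $v_i\in E$, and there are up to $d_{v_i}\le\Delta$ of those. For each such $E$, the $q$-set $E\setminus\{v_i\}$ has codegree at most $\Delta$, giving at most $\Delta$ choices of $L_i$ (and $u_i$ is then forced). Hence the number of bad pairs $(u_i,L_i)$ at index $i$ is at most $\Delta^2$, not $\Delta$; after summing over $i$ and multiplying by the unconstrained choices at the remaining $q$ indices, the bad fraction from this sub-case alone is of order $\Delta^2/m$. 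The hypothesis only guarantees $\Delta^{1+1/q}\le m/8$, which for $q\ge 2$ permits $\Delta$ of order $m^{q/(q+1)}$, and then $\Delta^2/m$ is of order $m^{(q-1)/(q+1)}\to\infty$. Concretely, take $q=2$ and $\Delta=\lfloor m^{3/5}\rfloor$: the hypothesis holds for large $m$, yet $\Delta^2/m\sim m^{1/5}$, so the invalid tuples swamp the valid ones and no lower bound on $f_{\min}$ follows.

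The paper's switching is structurally different and is designed exactly to avoid this. It selects only $q$ auxiliary ordered edges $K_1,\dots,K_q$, pairwise disjoint and disjoint from $K_0:=K$, arranges the $q{+}1$ ordered edges as the rows of a $(q{+}1)\times(q{+}1)$ array, and replaces the rows by the columns $J_0,\dots,J_q$. The key feature is transversality: if some column $J_j$ is already an edge of $G$, then fixing $J_j$ pins down one vertex in \emph{every} auxiliary row $K_1,\dots,K_q$ simultaneously, so the bad count for that event is at most $(\Delta\, q!)^{q+1}$, giving a bad fraction of order $\Delta^{q+1}/m^q=(\Delta^{1+1/q}/m)^q$. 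This is what makes $\Delta^{1+1/q}$ the correct threshold. Your one-edge-at-a-time replacements $L_i\mapsto L_i^*$ decouple the auxiliary edges from one another, and that decoupling is precisely what breaks the bound under the stated hypothesis.
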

        		\begin{proof} 
        		Let $\uh_0 = (v_{00}, v_{01}, \dots, v_{0q})$ be an ordering of the elements (vertices) of $K$. We say that a $(q+1)$-tuple is an (ordered) edge in a $(q+1)$-graph $G$ if the corresponding set is an edge in $G$. For each $(q+1)$-graph $G$ with degree sequence $\dv$ containing $\uh_0$, we can perform a switching by removing $\uh_0$ and $q$ other ordered mutually disjoint edges in $G$, $\uh_i = (v_{i0}, v_{i1}, \dots,  v_{iq} )$,  and inserting the ordered edges $J_j = (v_{0j}, \dots, v_{qj} )$ for $0 \leq j \leq q$, as long as no multiple edges are formed.
        		    We will show that the number of suitable choices for the ordered edges $\uh_i$ is at least 
        			\bee \label{eq:simplesw}
        			 \left( (m-(q+1)^2 \Delta)(q+1)! \right)^{q} - (q+1) \left(q!\Delta \right)^{q+1}  \geq  (m(q+1)!)^q \left(1 - \frac{(q+1)^2 \Delta}{m} - \frac{\Delta^{q+1} (q+1)!}{(m(q+1))^q} \right).
        			\ee
        			For, having chosen $\uh_0, \dots, \uh_i$ for some $i < q$, there are at most $(q+1)^2\Delta$ edges with a vertex in $\bigcup_{i' = 0}^{i}\uh_{i'}$, which are unavailable as a choice for $\uh_{i+1}$. Moreover, we need to subtract choices for $J_i$ which are already an edge of $G$. The number of such choices for the $(q+1)$-tuple $J_0 = (v_{00}, v_{10}, \dots v_{q0})$ which forms an edge in $G$ is at most $\Delta q!$. Given these vertices, the number of ways to choose and order the remaining edges $K_i$ containing $v_{i0}$ is at most $ (\Delta q!)^q$. Repeating this for $J_1, \dots, J_q$ gives $(q+1)(\Delta q!)^{q+1}$. By assumption, $\left((q+1)^2 + \Delta^{1/q} \right) \Delta  \leq \frac 18 m $, so the right-hand side of~\eqref{eq:simplesw} is at least		
         $$ \frac 12 (m(q+1)!)^q.$$
         
        			On the other hand, for each $k$-graph $G'$ in which  $\uh$ is {\em not} an edge, the number of ways that $\uh$ is created by performing such a switching in reverse is at most $\prod_{v\in \uh} d_{v} (q!)^{q+1} = \Pid{\uh}(\dv)(q!)^{q+1}$. To see this, note that each $J_j$ has to contain $v_{0j}$ as the first vertex in the ordering, and the remaining vertices can be ordered arbitrarily.
        			
        			Counting the set of all possible switchings over all such $(q+1)$-graphs $G$ and $G'$ in two different ways shows that the ratio of the number of graphs with  $\uh$  to the number without  $\uh$ is
        $$
        \frac{P_{\uh}(\dv)}{1-P_{\uh}(\dv)} \leq \frac{2 q!\Pid{\uh}(\dv) }{(m(q+1))^q }.
        $$
        In particular, $P_{\uh}(\dv)\le \frac{2 q!\Pid{\uh}(\dv) }{(m(q+1))^q} $, as required.
        		\end{proof}
        
        		\subsection{Concentration results}
        		    The following lemma is originally due to McDiarmid~\cite{McD}, and the present formulation can be found in~\cite{lw17}.
                    \begin{lemma}[McDiarmid]
                    \thlab{l:subsetConc}  Let $c>0$ and let $f$ be a function  defined on the set of   subsets of some set $U$ such that $\size{f(S)-f(T)}\le c$ whenever $|S|=|T|=m$ and $|S\cap T|=m-1$. Let $S$ be a  uniformly random $m$-subset of $U$.   Then   for all $\alpha>0$ we have
                    $$
                    \pr\left(\size{f(S)-\ex f(S)} \ge \alpha c\sqrt m  \right) \le 2\exp (- 2\alpha^2).
                    $$
                     \end{lemma}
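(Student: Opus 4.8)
The plan is to realise the random $m$-subset as the first $m$ points of a uniformly random linear order on $U$, build the associated Doob martingale, and apply the Azuma--Hoeffding inequality; the only non-routine ingredient is a coupling that upgrades the hypothesis on $f$ into a \emph{one-sided} (interval) bound on the martingale increments, which is what produces the sharp constant $2$ in the exponent.

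Concretely, write $N=|U|$, let $\pi$ be a uniformly random permutation of $U$, and set $S=\{\pi(1),\dots,\pi(m)\}$, so that $S$ is a uniformly random $m$-subset. Let $\mathcal F_i=\sigma(\pi(1),\dots,\pi(i))$ and $X_i=\ex[f(S)\mid\mathcal F_i]$. Then $X_0,X_1,\dots,X_m$ is a martingale with $X_0=\ex f(S)$ and $X_m=f(S)$ (indeed $X_i=X_m$ for all $i\ge m$, since $S$ is $\mathcal F_m$-measurable), so it suffices to control the increments $X_i-X_{i-1}$ for $1\le i\le m$.

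The key step is to show that for each such $i$ there is an $\mathcal F_{i-1}$-measurable random variable $A_i$ with $A_i\le X_i-X_{i-1}\le A_i+c$ almost surely. Fix $\pi(1),\dots,\pi(i-1)$, and for a value $x$ not among them put $h(x)=\ex\big[f(S)\mid\mathcal F_{i-1},\,\pi(i)=x\big]$; I would argue that $|h(x)-h(y)|\le c$ for any two such $x,y$. Couple the conditional laws of the tail $(\pi(i+1),\dots,\pi(N))$ as follows: draw a uniformly random arrangement $\tau$ of $U\setminus\{\pi(1),\dots,\pi(i-1),x\}$ for the world $\pi(i)=x$, and for the world $\pi(i)=y$ use the arrangement obtained from $\tau$ by replacing the unique occurrence of $y$ with $x$. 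Both marginals are correct, and a short case analysis on whether that occurrence lies among the tail positions $i+1,\dots,m$ shows that the two resulting $m$-sets either coincide or differ in exactly one element; in either case the hypothesis on $f$ gives $|f(S^x)-f(S^y)|\le c$ under the coupling, hence $|h(x)-h(y)|\le c$. Therefore the values $h(x)$ all lie in an interval of length $\le c$ whose endpoints are $\mathcal F_{i-1}$-measurable; since $X_{i-1}$ is a convex combination of the $h(x)$ and $X_i=h(\pi(i))$, the increment $X_i-X_{i-1}$ lies in the asserted $\mathcal F_{i-1}$-measurable interval of length $\le c$.

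Given the increment bound, I would conclude by the standard computation: Hoeffding's lemma gives $\ex\big[e^{\lambda(X_i-X_{i-1})}\mid\mathcal F_{i-1}\big]\le e^{\lambda^2c^2/8}$ for every $\lambda\in\reals$, so $\ex\big[e^{\lambda(X_m-X_0)}\big]\le e^{m\lambda^2c^2/8}$; optimising $\lambda$ in Markov's inequality yields $\pr(X_m-X_0\ge t)\le e^{-2t^2/(mc^2)}$, and likewise for $-(X_m-X_0)$, so $\pr\big(|f(S)-\ex f(S)|\ge t\big)\le 2e^{-2t^2/(mc^2)}$; taking $t=\alpha c\sqrt m$ gives the claimed $2\exp(-2\alpha^2)$. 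I expect the only real obstacle to be the coupling in the third paragraph: one must set up the tail-coupling so that the two sampled $m$-sets differ in at most one element, which is exactly what turns the ``one-swap'' hypothesis on $f$ into the sharp interval bound on the increments; everything else is routine. (Alternatively one could quote McDiarmid's bounded-differences inequality for sampling without replacement, or reduce to the with-replacement case via a Hoeffding-type convexity argument, but the self-contained martingale route above is the one I would take.)
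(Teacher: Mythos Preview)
Your argument is correct: the permutation-martingale construction, the tail-coupling showing the two conditional $m$-sets differ in at most one element, and the Hoeffding--Azuma computation all go through, and the constant $2$ in the exponent is exactly what the interval (one-sided) increment bound yields.

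There is nothing to compare against, however: the paper does not prove this lemma. It is stated with attribution to McDiarmid~\cite{McD} (with the present formulation taken from~\cite{lw17}) and used as a black box. Your proof is essentially the standard one --- the martingale on a random linear order together with the swap-coupling is precisely the argument behind McDiarmid's bounded-differences inequality for sampling without replacement --- so you have supplied what the paper deliberately omitted.
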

                      Recall that by $\omega(1)$ we denote a function that tends to $\infty$ arbitrarily slowly with $n$, and that $\Bknm$ is  a sequence of $n$ i.i.d.~random variables each distributed as ${\rm Bin}\left( \binom{n-1}{k-1}, \frac 12 \right)$ conditioned on $\sum d_v=km$. 
  \begin{lemma}
                    \thlab{l:sigmaConc}  Define $\dv=(d_1,\ldots, d_n)$ as either (a) the degree sequence of a random $k$-graph in $\Hknm$, or   (b)  a sequence in $\Bknm$. Let $d=km/n$.  
                    \begin{enumerate}[(i)]
                    \item \label{itm:degconc} For $1\le v\le n$ and  all $\alpha>0$   we have 
                    $$
                    \Pr(|d_v-d|\ge \alpha  )\le 2\exp\bigg(-\frac{\alpha^2}{2(d+\alpha/3)}\bigg) \leq 2 \exp \left(- \min \left \{ \frac{\alpha^2 }{3d}, \frac{\alpha}{3} \right \} \right).
                    $$ 
                    \item \label{itm:sigmaconc} If $\beta k \geq 100 $, then
                     we have 
                          $$     \pr\left( \size{\sigma^2 (\dv) - \Var\, d_1 }  \ge  2d  \left( \frac{\beta k \log n}{\sqrt{n}}  + \sqrt{ \frac{(\beta k \log n)^3}{nd} } \right) \right) =O\left( n^{-\beta/10} \right) .$$
                    Moreover, $ \Var\, d_1 = d\left(1- m\binom{n}{k}^{-1}\right)\cdot \rel{ \frac kn }.$ 
                    \end{enumerate}
                    
                    \end{lemma}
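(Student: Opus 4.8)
\textbf{Proof plan for \thref{l:sigmaConc}.}

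\medskip

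\emph{Part (i): the single-degree tail.} I would handle the two cases of $\dv$ uniformly. In case (b), $d_v$ is a binomial ${\rm Bin}\big(\binom{n-1}{k-1},\tfrac12\big)$ conditioned on the linear constraint $\sum_w d_w=km$; in case (a), $d_v$ is hypergeometric with parameters $\binom nk$, $m$, $\binom{n-1}{k-1}$. The cleanest route is to observe that in both cases $d_v$ can be written as a sum of independent indicator variables (in case (a), via the correspondence of a random $k$-graph with a uniformly random $m$-subset of $\binom{[n]}{k}$, together with a negative-association / Chernoff argument for sampling without replacement; in case (b), via conditioning a sum of independents on their total, which preserves a Bernstein-type tail bound). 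Then the Bernstein inequality $\Pr(|d_v-\ex d_v|\ge\alpha)\le 2\exp\!\big(-\tfrac{\alpha^2}{2(\sigma^2+\alpha/3)}\big)$, with variance bounded above by $d$, gives the first inequality after replacing $\ex d_v$ by $d$ (they differ by $O(1)$, which can be absorbed, or one notes $\ex d_v=d$ exactly under the constraint). The second inequality is the routine split according to whether $\alpha\le 3d$ or not. I would cite \thref{l:subsetConc} (McDiarmid) as an alternative in case (a): the function $f(S)=|\{e\in S: v\in e\}|$ changes by at most $1$ under a single-element swap of the $m$-set $S$, and $\ex f(S)=d$, giving a subgaussian bound $2\exp(-2\alpha^2/m)$ — this is weaker than Bernstein for small $\alpha$ but suffices; still, I would prefer to quote the Bernstein/Chernoff version to get the stated sharper form. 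This part is short and standard.

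\medskip

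\emph{Part (ii): concentration of $\sigma^2(\dv)$.} The target deviation is $2d\big(\tfrac{\beta k\log n}{\sqrt n}+\sqrt{\tfrac{(\beta k\log n)^3}{nd}}\big)$ with failure probability $O(n^{-\beta/10})$. My plan is to first pass to a truncated sequence. Let $\alpha_0$ be chosen so that the tail bound in part (i) is $\le n^{-\beta/3}$ (roughly $\alpha_0 \asymp \sqrt{\beta d\log n}+\beta\log n$, i.e.\ $\alpha_0=\Theta(\beta k\log n)$ up to the regime split — actually one wants $\alpha_0$ of order $\sqrt{d\cdot \beta\log n}$ when $d$ dominates). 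By a union bound over the $n$ coordinates, with probability $1-O(n^{1-\beta/3})$ every $|d_v-d|\le \alpha_0$. On this event the random variable $\sigma^2(\dv)=\tfrac1n\sum_v(d_v-d)^2$ equals its truncated version, so it suffices to prove concentration of the truncated functional and then separately control the (tiny) contribution of the rare large-deviation coordinates to both $\sigma^2$ and to $\ex\sigma^2$. For case (a) I would apply McDiarmid's inequality \thref{l:subsetConc} to $g(S)=\tfrac1n\sum_v (\min\{|f_v(S)-d|,\alpha_0\})^2$, where $f_v(S)=\deg_v$ in the $k$-graph given by the $m$-set $S$: swapping one element of $S$ changes each $d_v$ by at most $1$, affects at most $2k$ coordinates, so $g$ changes by at most $2k(2\alpha_0+1)/n = O(k\alpha_0/n)$; thus $g$ is concentrated within $\alpha\cdot\tfrac{Ck\alpha_0}{n}\sqrt m$ with probability $1-2e^{-2\alpha^2}$. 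Taking $\alpha=\Theta(\sqrt{\beta\log n})$ makes the failure probability $O(n^{-\beta/10})$ (the $\tfrac1{10}$ is the slack for choosing constants), and the resulting window is $O\big(\tfrac{k\alpha_0\sqrt{m\beta\log n}}{n}\big)$; substituting $\alpha_0$ and $m=dn/k$ and simplifying should produce exactly the two terms $d\cdot\tfrac{\beta k\log n}{\sqrt n}$ and $d\sqrt{\tfrac{(\beta k\log n)^3}{nd}}$ (the first from the $\alpha_0\asymp\sqrt{d\log n}$ part, the second from the $\alpha_0\asymp\beta\log n$ part of the two-regime tail). For case (b), the conditioned-binomial sequence, I would instead use a concentration inequality for functions of independent variables conditioned on a linear statistic — e.g.\ writing $\dv$ as a function of the unconditioned independent binomials and using that conditioning on a single linear constraint costs at most a polynomial factor $O(\sqrt n)$ in probability (since $\Pr(\sum d_w=km)=\Omega(n^{-1/2})$ in the relevant range), which is absorbed by strengthening $\beta$ slightly; alternatively apply McDiarmid directly to the unconditioned independents with bounded differences and then condition. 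Finally, I would reconcile $\ex g(S)$ (the truncated mean) with $\Var\,d_1$: they differ by the truncation correction, which is $O(n\cdot\alpha_0^2\cdot n^{-\beta/3})=o(1)$ relative to the error window, and by the discrepancy between the uniform-hypergraph model and the conditioned-binomial model, which is again within the stated relative error.

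\medskip

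\emph{The ``moreover'' clause.} For $\Var\,d_1$ in case (b): $d_1$ is ${\rm Bin}(\binom{n-1}{k-1},\tfrac12)$ conditioned on $\sum d_w=km$; the conditioning shifts the marginal so that $\ex d_1=d=km/n$ and the variance of one coordinate of a sum of $n$ i.i.d.\ variables conditioned on the total is $\Var\,d_1=\Var(X_1)(1-\tfrac1n)+O(\cdot)$ — more precisely, for exchangeable $(X_i)$ with $\sum X_i$ fixed one has $\Var(X_1\mid \sum X_i=s)= \tfrac{n-1}{n}\big(\Var X_1 - \Cov(X_1,X_2)\big)$-type identities; carrying this through with $\Var\,{\rm Bin}(\binom{n-1}{k-1},p)$ at the appropriate implied $p$ gives $d(1-\mu)$. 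For case (a), $d_1$ is hypergeometric, with exact variance $\binom{n-1}{k-1}\cdot\tfrac m{\binom nk}\cdot\big(1-\tfrac m{\binom nk}\big)\cdot\tfrac{\binom nk-\binom{n-1}{k-1}}{\binom nk-1}=d(1-\mu)\cdot\tfrac{\binom nk-\binom{n-1}{k-1}}{\binom nk-1}$, and $\tfrac{\binom nk-\binom{n-1}{k-1}}{\binom nk-1}=1-\tfrac kn+O(n^{-1})=1+O(k/n)$, which is the claimed $\rel{k/n}$ factor.

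\medskip

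\emph{Main obstacle.} The genuinely delicate step is getting the \emph{two} separate error terms $\tfrac{\beta k\log n}{\sqrt n}$ and $\sqrt{\tfrac{(\beta k\log n)^3}{nd}}$ to fall out with the right constants, which forces care in how the truncation level $\alpha_0$ interacts with the two-regime (sub-Gaussian vs.\ sub-exponential) tail in part (i): the first term comes from the sub-Gaussian regime where $\alpha_0\asymp\sqrt{d\beta\log n}$ feeds into the bounded-difference constant $k\alpha_0/n$, and the second from the sub-exponential regime where $\alpha_0\asymp\beta\log n$ dominates when $d$ is small. A secondary (but routine) obstacle is justifying the concentration machinery in case (b): McDiarmid's bounded-differences inequality does not directly apply to variables conditioned on a linear constraint, so one must either invoke a known concentration result for the conditioned measure (e.g.\ via the $\Theta(n^{-1/2})$ lower bound on the normalising probability, absorbing the loss into $\beta$) or work with a different exchangeable-pair / Stein-type argument; I would present the $n^{-1/2}$-absorption route as it is the least technical. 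Everything else is bookkeeping with binomial/hypergeometric moments.
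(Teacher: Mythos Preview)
Your plan for part~(i) and for part~(ii) in case~(a) matches the paper's proof essentially verbatim: truncate at a level chosen via the two-regime Bernstein tail from~(i), then apply \thref{l:subsetConc} to the truncated functional with Lipschitz constant $O(k\sqrt{x}/n)$ per single-edge swap, and finally reconcile the truncated expectation with $\Var\,d_1$.

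There is, however, a real gap in your treatment of case~(b). Your proposed ``absorption'' route relies on $\Pr\big(\sum_w d_w=km\big)=\Omega(n^{-1/2})$, but this is false: the unconditioned sum is $\mathrm{Bin}\big(n\binom{n-1}{k-1},p\big)$, whose point probabilities are of order $\big(n\binom{n-1}{k-1}\big)^{-1/2}$. For $k$ growing like a power of $n$ (which the lemma must cover), $\binom{n-1}{k-1}$ is super-polynomial in $n$, so the normalising probability is far too small to absorb into the exponent; the $\beta$-strengthening trick does not save you. The paper avoids this entirely by the following observation: writing each $d_w$ as a sum of $\binom{n-1}{k-1}$ Bernoulli indicators, conditioning on $\sum_w d_w=km$ is \emph{exactly} the same as choosing a uniformly random $km$-subset of the $n\binom{n-1}{k-1}$ indicator positions. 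Hence each $d_v$ is hypergeometric with parameters $n\binom{n-1}{k-1},\,km,\,\binom{n-1}{k-1}$, and more importantly the whole sequence is a function of a uniformly random subset, so \thref{l:subsetConc} applies directly to case~(b) with the identical bounded-difference calculation as in case~(a) (a single-element swap of the subset changes one $d_v$ by $1$, affecting the truncated sum by at most $2\sqrt{x}$). This also gives the ``moreover'' clause for~(b) immediately from the hypergeometric variance formula, rather than via the exchangeable-conditioning identity you sketch.
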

                    \begin{proof} 
          First consider (a). In $\Hknm$, each  vertex degree $d_v$ is distributed hypergeometrically with parameters ${n\choose k},m, {n-1 \choose k-1}$. Thus the expected value of $d_v$ is $d $, and hence  (i) holds by~\cite[Theorem~2.10, equations~(2.5) and~(2.6)]{jlr}.

                                        To show (ii), 
                     define
                    $$
                     f(\dv) = f(\dv(G)) =\sum_{v \in [n]} \min\{(d_v-d)^2,  x\} 
                    $$
                   for a $k$-graph $G$ with degree sequence $\dv = \dv(G)$,   
                    where $x = \beta k d \log n \left(1 + \frac{\beta k \log n}{4d} \right)$. 
                    A $k$-graph $G \in \Hknm$ corresponds to a random $m$-subset of all possible edges. {Moving} one edge in $G$ {to a different position} affects at most $2k$ vertex degrees, and the contribution of each vertex to the change {in}  $f$ is at most $ (\sqrt{ x})^2 - (\sqrt{x}-1)^2 <2\sqrt x$. Thus  Lemma~\ref{l:subsetConc} applies with $c=4k\sqrt x$. Taking $\alpha^2 = \frac{\beta \log n}{16}$ and $m = \frac {dn}{k}$, we get
                    \begin{align*}
                    \pr \left( |f(\dv)-\ex f(\dv)|  \ge  \sqrt{ \beta x d kn \log n}  \right) &\le 2\exp \left(- \frac{\beta \log n}{8} \right).
                    \end{align*}
                    
                    On the other hand, 
                    let    $A$ denote the event $\max_v|d_v-d|\ge \sqrt x$. The parameter $x$ was set so that $\frac{x}{3d}\geq \frac{\beta k \log n}{3}$ and $\frac{\sqrt{x}}{3} \geq \frac{\beta k \log n}{2 \cdot 3}$. Therefore, by (i) and the union bound,
                    $$\pr(A) \leq 2n \exp \left( - \min \left \{ \frac{x}{3d}, \frac{\sqrt{x}}{3} \right \} \right) \leq
        2n^{-\beta k/ 6+1} =O \left( n^{-\beta k /10} \right).$$  
                    Provided that $A$ does not hold,  we have $f(\dv)= \sum_v (d_v-d)^2=n \sigma^2(\dv)$. We thus conclude 
                    $$
                    \pr\left( \size{ n \sigma^2 (\dv) -\ex f(\dv)}  \ge  \sqrt{ \beta x d kn \log n}  \right) \leq 2 n^{-\beta /8 } + \pr(A) = O \left(n^{-\beta/10} \right).
                    $$
                    To pass from $\ex f(\dv)$ to $\ex \sigma^2(\dv) = \Var  \, d_1$, we note that for all $\dv$, $|f(\dv) - \sigma^2(\dv)| \leq 2\sigma^2(\dv) \leq n^{2k-1}$ and thus, applying this crude upper bound whenever $f(\dv)\ne  n \sigma^2(\dv)$,
                    $$
                    \size{\ex \left[ f(\dv) - n \sigma^2(\dv)  \right] } \le 2n^{2k-1}  \pr(f(\dv)\ne  n \sigma^2(\dv) )= O \left( n^{2k-1 - \beta k / 6} \right) = o\left( n^{-1} \right). 
                    $$
                   It follows that 
                    $$
                    \pr\left( \size{\sigma^2(\dv) - \Var\, d_1 }  \ge  \sqrt{ \frac{ \beta x d k \log n}{n} } + \frac 1n \right) =O\left( n^{-\beta/10} \right) .
                    $$
                   It remains to check that the deviation $\sqrt{\beta x d k \log n / n} + n^{-1}$ is smaller than the one claimed in (ii). Indeed,
                   $$ \sqrt{ \frac{ \beta x d k \log n}{n} } + \frac 1n \leq 2 \sqrt{\frac 1n \left( (\beta k d \log n)^2 + d(\beta k \log n)^3 \right) } 
                   \leq \frac{d}{\sqrt n} \left( \beta k \log n + \sqrt{\frac{(\beta k \log n)^3} {d}} \right).
                   $$
                   This concludes the proof of part (ii) for (a). The estimate for $\Var\,  d_1$ follows from the standard formula for variance of this hypergeometric random variable.
                    
                    For the binomial random variable case (b), {a similar  argument applies} 
                     by modelling $ d_1,\ldots, d_n $ each as a sum of $\binom{n-1}{k-1}$ independent indicator variables conditioned on $\sum_{v \in [n]}d_v =  km.$ Conditioning on the sum  being $km$ is equivalent to a uniformly random selection of a $(km)$-subset of the $n \binom {n-1}{k-1}$ indicator variables.  
                     In particular, every $d_v$ is distributed hypergeometrically with parameters $n\binom{n-1}{k-1}, km, \binom{n-1}{k-1}$,  which leads to both (i) and (ii) by an argument virtually identical to that for (a). 
        \end{proof}

     \section{Recursive relations} \label{sec:recursions}    
            We will now derive recursive relations which, when iterated, allow us to estimate the probabilities of adjacent degree sequences with increasing accuracy. Recall that we are considering $k$-uniform hypergraphs   on the vertex set $V$ which has size $n.$  
            For $\dv \in \Z^n$  and $a, b \in V$  with $\Num(\dv - \eb) >0$, we define
                $$R_{ab}(\dv) = \frac{ \pr_{\Dknm}(\dv - \ea) }{ \pr_{\Dknm}(\dv - \eb)} = \frac{\Num(\dv - \ea)}{\Num(\dv - \eb)}.$$
                     For $A \in \binom{V}{k-1}$ and $v \notin A$, we set $v + A = A+v = A \cup \{v \}$. 
                 For an integer $k$, sets $J, L \in \binom{V}{k}$ with $J\neq L$ and $\dv \in \Z^n$, let $\Num_{L}(\dv)$ denote the number of $k$-graphs with degree sequence $\dv$ containing the edge $L$; and let $\Num_{J, L}(\dv)$ be the number of $k$-graphs with degree sequence $\dv$ containing both edges $J$ and $L$. Before stating the recursive relations, we state a simple identity from~\cite{lw17} which will be used several times to relate degree sequences of differing total degree. It was proved in~\cite{lw17} for $k=2$, but the proof readily extends to arbitrary $k$. 
                                  
                \begin{lemma} \thlab{trick17}
                Let $J, L \in \binom{V}{k}$ and let $\dv$ be a sequence of length $n$. 
                Then 
                \begin{align*}
                \Num_{J, L}(\dv) 
                	&= \Num_J (\dv - \ve_{L}) -  \Num_{J,L} (\dv - \ve_{L}) \quad \text{and, similarly,}\\
                \Num_{L}(\dv) 	&= \Num (\dv - \ve_{L}) -  \Num_{L} (\dv - \ve_{L}).
                \end{align*}
                \end{lemma}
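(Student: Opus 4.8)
The plan is to derive both identities from a single bijection, namely adding or deleting the edge $L$.

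First I would record the basic bijection. For a $k$-graph $G$ on $V$ (thought of as a family of $k$-subsets of $V$) with $\deg_G = \dv$ and $L \in E(G)$, deleting $L$ lowers the degree of each vertex of $L$ by exactly one and leaves all other degrees unchanged, so $G \setminus \{L\}$ is a $k$-graph with degree sequence $\dv - \ve_{L}$ not containing $L$. Conversely, if $H$ is a $k$-graph with $\deg_H = \dv - \ve_{L}$ and $L \notin E(H)$, then in particular $d_v \geq 1$ for every $v \in L$, so $H \cup \{L\}$ is a legitimate $k$-graph, it contains $L$, and its degree sequence is $\dv$. Hence $G \mapsto G \setminus \{L\}$ is a bijection from $\{G : \deg_G = \dv,\ L \in E(G)\}$ onto $\{H : \deg_H = \dv - \ve_{L},\ L \notin E(H)\}$, with inverse $H \mapsto H \cup \{L\}$. (If $d_v \leq 0$ for some $v \in L$, then $\dv - \ve_{L} \notin \Z^n$ and no $k$-graph with degree sequence $\dv$ contains $L$; every term in the two claimed identities is then $0$, so we may assume $d_v \geq 1$ for all $v \in L$.)

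The second identity is immediate: the $k$-graphs with degree sequence $\dv - \ve_{L}$ split into those containing $L$ and those not, so
$$\Num_{L}(\dv) = \#\{H : \deg_H = \dv - \ve_{L},\ L \notin E(H)\} = \Num(\dv - \ve_{L}) - \Num_{L}(\dv - \ve_{L}).$$
For the first identity I would restrict the bijection to graphs that also contain $J$. Since $J \neq L$, a $k$-graph $G$ with $L \in E(G)$ satisfies $J \in E(G)$ if and only if $J \in E(G \setminus \{L\})$, so the bijection restricts to one between $\{G : \deg_G = \dv,\ J, L \in E(G)\}$ and $\{H : \deg_H = \dv - \ve_{L},\ J \in E(H),\ L \notin E(H)\}$. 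The first set has cardinality $\Num_{J,L}(\dv)$, and the second, obtained from the $\dv - \ve_{L}$ graphs containing $J$ by discarding those that also contain $L$, has cardinality $\Num_{J}(\dv - \ve_{L}) - \Num_{J,L}(\dv - \ve_{L})$; equating cardinalities gives the first identity.

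There is no substantial obstacle here: the statement is a one-line inclusion--exclusion disguised as a bijection, and the only points needing care are the use of the hypothesis $J \neq L$ (so that removing $L$ does not change whether $J$ is present) and the trivial degenerate case in which some vertex of $L$ has degree $0$.
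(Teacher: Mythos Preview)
Your proof is correct; the bijection ``delete/add the edge $L$'' is exactly the natural argument, and your treatment of the degenerate case and the use of $J\neq L$ are both accurate. The paper does not actually supply its own proof of this lemma but defers to~\cite{lw17} (remarking that the $k=2$ argument there extends verbatim), and your write-up is essentially that argument spelled out for general $k$.
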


             The recursive relations include ordered $(k-1)$-tuples, so let us specify an ordering which breaks ties and is easy to work with. Given $A, B \in \binom{V}{k-1}$, the \emph{A-consistent} ordering of the pair $(A, B)$ is an ordering $(a_1, a_2, \ldots, a_{k-1})$ of $A$ and $(b_1, \ldots, b_{k-1})$ of $B$ 
             such that (1) $a_i < a_{i+1}$ for $i \in [k-2]$, (2) whenever  $b_i \in A$, then $b_i = a_i$, and (3) if  $b_i, b_j \notin A$ and $i<j$, then $b_i < b_j$. For $A, B 
              \in \binom{V}{k-1}$ and their $A$-consistent ordering $(a_j)_{j \in [k-1]}, (b_j)_{j \in [k-1]}$, define
                \begin{equation}
                    \label{eq:ratioprod} R_{\ub, \ua} (\dv)= \prod_{j=1}^{k-1} R_{b_j a_j} (\dv  - \ve_{b_1b_2\cdots b_{j-1}a_{j+1}\cdots a_{k-1}}) ,
                \end{equation}
             assuming that all the sequences $\dv -\ve_{b_1 \cdots b_j a_{j+1} \cdots a_{k-1}}$  are $k$-graphical. 
         
            Let $K \in \binom{V}{k-1}$ and $a, b \in V$ with $a \neq b$. We set $\sp_{a, K, a}(\dv) = 0$ and define $\sp_{a, K, b} (\dv)$ as the probability that a random $k$-graph with degree sequence $\dv$ contains the edges $a+K$ and $K+b$, so we have
                \begin{equation}    \label{eq:psdefinitions}
                    \sp_{a, K, b} = \frac{\Num_{a+K, K+b}(\dv)}{\Num(\dv)} \quad \text{and} \quad P_{a+K}(\dv) = \frac {\Num_{a+K}(\dv) }{\Num(\dv)}. 
                \end{equation}
            
             We now state the identities relating $R_{ab} (\dv)$, $P_{a+ K} (\dv)$ and $\sp_{a, K, b} (\dv)$. They extend the relations from~\cite{lw17} which concern graphs. 
               \begin{lemma}  \thlab{t:recrel}
                Let  $2\le k \le n$ be integers  and $\dv \in \Z^n$. 
                \begin{itemize}
                    \item[(a)]
                        For $a, b \in V$, if $\dv-\ea$ and $\dv - \eb$ are $k$-graphical, then
                    	\begin{align} \label{eq:Rfixed}
                    	 R_{ab} ( \dv) &= \frac{d_{a}}{d_{b}}\cdot \frac{ 1-\Bad(a,b, {\bf d} - \eb)}{ 1-\Bad(b,a, {\bf d} - \ea)}, 
                    	\end{align}
                    	provided that  $\Bad(b,a, {\bf d} - \ea) < 1$,	where 
                    	\begin{align}\label{eq:bad}
                    	\Bad(i,j, \dw) 
                    	& = 
                    	 \frac{1}{d_{i}}\left(\sum_{ \uh \in \binom{V \setminus \{ a, b\}}{k-2} }  P_{i + \uh+ j}(\dw)   +
                    	 \sum_{ \uh \in \binom{V \setminus \{ a, b\}}{k-1} }  \sp_{i, \uh, j}(\dw) \right). 
                    	\end{align}
                    \item[(b)] Let $\ua \in \binom{V}{k-1}$ and $v \in V \setminus \ua$. We have 
        	\begin{equation}  \label{eq:Pfixed} 
        	P_{\ua\unn v}(\dv) = d_{v} \left( \sum_{\ub \in \binom{V \setminus \{ v\}}{k-1}} R_{\ub, \ua} (\dv-\ve_{v})
        	\frac{1-P_{\ub\unn v}(\dv - \ve_{\ub \unn v} )}
        	{1-P_{\ua \unn v}(\dv - \ve_{ \ua  \unn v} )} \right)^{-1},
        	\end{equation}
        	    provided that $P_{\ua+ v }(\dv)>0$ and the sequences $\dv -\ve_{b_1 \cdots b_j a_{j+1} \cdots a_{k-1}}$ are $k$-graphical for $j \in \{0, 1, \dots, k-1 \}$  and any $B \in \binom{V \setminus \{ v\}}{k-1}$. 
                    \item[(c)]   For $\uh \in \binom{V}{k-1}$ and $a,  b \in V \setminus \uh$ with $a \neq b$, if $P_{\uh+ a}(\dv-\ve_{\uh+ a}  )< 1$, then
        		\bee \label{eq:Sfixed}
        			\sp_{a, \uh, b } (\dv) = \frac{P_{\uh+ a }(\dv)}{1-P_{\uh+ a}(\dv-\ve_{\uh+ a})} \left(P_{\uh+ a}(\dv - \ve_{\uh+ a}) - \sp _{a, \uh, b}(\dv - \ve_{\uh+ a}) \right).
        		\ee
                \end{itemize}
            \end{lemma}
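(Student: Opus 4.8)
The plan is to derive all three identities by counting the same families of $k$-graphs in two ways, using Lemma~\thref{trick17} to bridge sequences of differing total degree, exactly as in the graph case of~\cite{lw17} but now tracking the $(k-1)$-element ``stem'' sets that appear when one fixes all but one vertex of an edge. For part (a), I would start from the elementary observation that $\Num(\dv-\ea)$ counts $k$-graphs with degree sequence $\dv-\ea$; classify each such graph by whether, upon conceptually ``adding back'' an edge at $a$, the extra edge can instead be routed through $b$. Concretely, a standard edge-switching bijection pairs $k$-graphs with degree sequence $\dv-\ea$ against those with degree sequence $\dv-\eb$ by swapping $a$ for $b$ in one incident edge; the ratio $d_a/d_b$ is the naive count of choices, and the correction factors $1-\Bad(a,b,\dv-\eb)$ and $1-\Bad(b,a,\dv-\ea)$ subtract the configurations where the switch is blocked because the target edge already exists. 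The blocking configurations split according to how many vertices of the would-be edge lie in the stem: either $b$ together with a $(k-2)$-set $\uh$ and $a$ forms an existing edge $i+\uh+j$ (the $P_{i+\uh+j}$ term), or a two-edge path through a $(k-1)$-set $\uh$ is present (the $\sp_{i,\uh,j}$ term). Dividing the two resulting expansions gives~\eqref{eq:Rfixed}; the hypothesis $\Bad(b,a,\dv-\ea)<1$ is exactly what is needed to divide.

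For part (b), I would express $\Num_{\ua+v}(\dv)$ by choosing, for the vertex $v$, which of its $d_v$ incident edges is the one completing the stem $\ua$, then using Lemma~\thref{trick17} repeatedly to peel off the $k-1$ vertices of $\ua$ one at a time. Each peeling step converts a statement about $\Num_{\cdot}(\dv-\ve_{\cdots})$ into a difference of two such quantities, and the telescoping product of the resulting ratios is precisely the product $R_{\ub,\ua}$ defined in~\eqref{eq:ratioprod} over the $A$-consistent ordering; the $A$-consistent ordering is the bookkeeping device that makes this telescoping unambiguous. Summing over all stems $\ub\in\binom{V\setminus\{v\}}{k-1}$ that could play the role of the ``other'' edge at $v$, and collecting the $1-P$ factors that record whether the intermediate configurations are genuinely edges, yields the reciprocal relation~\eqref{eq:Pfixed}; the graphicality hypotheses on $\dv-\ve_{b_1\cdots b_j a_{j+1}\cdots a_{k-1}}$ guarantee every $\Num$ appearing in the telescoping is positive so the ratios are well defined.

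Part (c) is the shortest: apply Lemma~\thref{trick17} with $J=\uh+a$, $L=\uh+a$ in the two-edge version — more precisely, with $J=K+b$ and $L=\uh+a$ — to write $\Num_{\uh+a,\,K+b}(\dv)$ as $\Num_{K+b}(\dv-\ve_{\uh+a}) - \Num_{\uh+a,K+b}(\dv-\ve_{\uh+a})$, divide through by $\Num(\dv)$, and re-express the factor $\Num(\dv-\ve_{\uh+a})/\Num(\dv)$ using $P_{\uh+a}(\dv) = \Num_{\uh+a}(\dv)/\Num(\dv)$ together with $\Num_{\uh+a}(\dv)=\Num(\dv-\ve_{\uh+a})-\Num_{\uh+a}(\dv-\ve_{\uh+a})$ from Lemma~\thref{trick17}; this last rearrangement produces the prefactor $P_{\uh+a}(\dv)/(1-P_{\uh+a}(\dv-\ve_{\uh+a}))$, and the condition $P_{\uh+a}(\dv-\ve_{\uh+a})<1$ is what legitimises it. I expect the main obstacle to be part (b): getting the $A$-consistent ordering to interact correctly with the iterated application of Lemma~\thref{trick17} so that the peeled-off ratios assemble into exactly the product~\eqref{eq:ratioprod} — in particular checking that the intermediate sequences are the claimed $\dv-\ve_{b_1\cdots b_{j-1}a_{j+1}\cdots a_{k-1}}$ and that no term is double-counted when $\ua$ and $\ub$ overlap — whereas (a) and (c) are essentially the $k=2$ arguments of~\cite{lw17} with extra stem-vertices carried along inertly.
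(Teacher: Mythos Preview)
Your proposal is correct and follows essentially the same route as the paper in all three parts: (a) is exactly the degree-switching bijection you describe, (c) is exactly the two applications of Lemma~\ref{trick17} you outline, and (b) starts from $d_v=\sum_{\ub}P_{\ub+v}(\dv)$ and then telescopes $\Num(\dv-\ve_{\ub+v})/\Num(\dv-\ve_{\ua+v})$ into $R_{\ub,\ua}(\dv-\ve_v)$. The only imprecision is in your mechanism for (b): Lemma~\ref{trick17} is applied \emph{once} to each $\Num_{\ub+v}(\dv)$, removing the whole edge $\ub+v$ and producing the $(1-P)$ factors, not repeatedly to peel off single vertices --- the subsequent telescoping of the ratio $\Num(\dv-\ve_{\ub+v})/\Num(\dv-\ve_{\ua+v})$ into the product~\eqref{eq:ratioprod} is pure algebra using only the definition $R_{ab}(\cdot)=\Num(\cdot-\ea)/\Num(\cdot-\eb)$.
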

            \begin{proof}
                 Let $\cH(\dv)=\cH(\dv;k,n)$ be the set of $n$-vertex $k$-graphs with degree sequence $\dv$. Observe that in case $a=b$, (a) holds since $R_{aa}(\dv) = 1$. To establish (a) for $a \neq b$, we analyse a natural \emph{switching} from $\cH(\dv-\eb)$ to $\cH(\dv -\ea)$. Let $L_{ab}(\dv)$ be the set of tuples $(G,K)$ where $G \in H(\dv - \eb)$ and $K$ is a $(k-1)$-subset of $V(G)\sm\{a,b\}$ such that $K+a$ is an edge of $G$ and $K+b$ is not an edge of $G.$ In this proof it is convenient to identify a hypergraph $G$ with its edge set.
                For a pair $(G,K)$ of $L_{ab}(\dv)$, switching the edge $K+a$ to the edge $K+b$ (that is, replacing $K+a$ by $K+b$) yields a pair $(G \setminus \{ K+a\} \cup \{K +b \}, K)$, which is an element of $L_{ba}(\dv)$. This perturbation, also called degree-switching, clearly yields a bijection between $L_{ab}(\dv)$ and $L_{ba}(\dv)$,
                    which implies $|L_{ab}(\dv)|  = |L_{ba}(\dv)|.$
                We claim that 
                \bee    \label{eq:Lab}
                   |L_{ab}(\dv)| = d_a \Num(\dv- \eb)(1- B(a, b, \dv - \eb)),
                \ee
                where $B(i, j, \dv')$ is defined in ~\eqref{eq:bad}.
                To see this, let $G \in \cH(\dv-\eb)$ be chosen uniformly at random and let $J+a$ be a random edge containing $a$, which we refer to as \emph{distinguished}. The probability measure in this space is denoted by $\pr$. Let $\Ap$ be the event  that $b \in J$ and $\Ae$ the event that $J+b$ is an edge of $G$. Since $\Ae$ and $\Ap$ are disjoint, we have
                $|L_{ab}(\dv)| = d_a \Num(\dv-\eb) (1- \pr(\Ae) - \pr(\Ap))$ and it suffices to show that $\pr(\Ap) + \pr(\Ae) = B(a, b, \dv - \eb)$.
                For any $K \in \binom{V \setminus \{ a, b \}}{k-1}$, we have 
                $$\pr \left( K+b \in G \text{ and } J=K \right) = \frac{\Num_{a+K, K+b}(\dv-\eb)}{\Num(\dv-\eb)d_a} = \frac{{Y_{a,K,b}(\dv-\eb)}}{d_a}.$$
                Summing over the choices of $K$, we get 
                   $$\pr(\Ap) = \frac{1}{d_a} \sum_{K \in \binom{V \setminus \{ a, b \}}{k-1} } Y_{a, k, b}(\dv- \eb).$$
                An analogous argument for the event $\Ae$ implies that 
                $$\pr(\Ae) = \frac{1}{d_a} \sum_{K \in \binom{V \setminus \{ a, b \}}{k-2} } P_{a+K+b}(\dv- \eb),$$
                which completes the proof of~\eqref{eq:Lab}.
               
                Applying~\eqref{eq:Lab} to $|L_{ba}(\dv)|$ and recalling that $|L_{ab}(\dv) |=|L_{ba}(\dv)|$ gives
                $$ R_{ab}(\dv)= \frac{\Num(\dv-\ea)}{\Num( \dv-\eb)} = 
                \frac{d_a}{d_b} \cdot  \frac{ (1- B(a, b, \dv - \eb))}{ (1- B(b, a, \dv - \ea))},$$
               where the denominator is non-zero by the hypothesis of (a).

                For (b), linearity of expectation and simple algebraic manipulation gives
                 $$d_v = \sum_{\ub \in \binom{V \setminus \{ v\}}{k-1} } P_{\ub + v}(\dv) 
                =  P_{\ua + v}(\dv)\sum_{\ub \in \binom{V \setminus \{ v\}}{k-1} } \frac{ \Num_{\ub + v}(\dv) }{ \Num_{\ua + v}(\dv) }.$$
                Using Lemma~\ref{trick17}, it follows that
               	\begin{equation}  \label{eq:prob_derivation}
                	    d_{v}=  P_{\ua\unn v}(\dv)  \sum_{\ub \in \binom{V \setminus \{ v\}}{k-1}} \frac{\Num(\dv-\ve_{B+ v})}{\Num(\dv-\ve_{A+ v})} \cdot
                	\frac{1-P_{\ub\unn v}(\dv - \ve_{\ub \unn v} )}
                	{1-P_{\ua \unn v}(\dv - \ve_{ \ua  \unn v} )} .
                	\end{equation}
                	 For $B\in (V\setminus \{v \} )^{k-1}$, let $(a_1, \dots, a_{k-1}), (b_1, \dots, b_{k-1})$ be the $A$-consistent ordering of $(A, B)$. Since the sequences $\dv - \ve_{\ua +v}$ and $\dv - \ve_{\ub +v}$ differ in at most $k-1$ entries, the fraction ${\Num(\dv-\ve_{B+ v})}/{\Num(\dv-\ve_{A+ v})}$ can be written as a \emph{telescopic} product of the ratios of form $R_{b_j a_j}(\dv')$,
                $$ \frac{\Num(\dv - \ve_{\ub+ v})}{\Num (\dv - \ve_{\ua+ v})} = \prod_{j=1}^{k-1}  \frac{\Num (\dv  - \ve_{b_1b_2\cdots b_{j-1} b_j a_{j+1}\cdots a_{k-1} v }) }{ \Num (\dv  - \ve_{b_1b_2\cdots b_{j-1} a_j a_{j+1}\cdots a_{k-1}v}) } = R_{\ub, \ua} (\dv-\ve_{v}),
                $$
                recalling that all the relevant degree sequences are $k$-graphical by assumption. Substituting this identity back into~\eqref{eq:prob_derivation}, we get the relation (b).
                
                For (c),
                applying \thref{trick17} and recalling equation~\eqref{eq:psdefinitions} defining $P$ and $\sp$ gives		
                \begin{align*} \non
        			\sp_{a, \uh, b } (\dv) {\Num(\dv)} &= \Num_{\uh+b}(\dv-\ve_{\uh, a}) - \Num_{\uh+a, \uh+b}(\dv - \ve_{\uh, a})
        			\\
        		&= \left(P_{\uh, b}(\dv - \ve_{\uh, a}) - \sp_{a, \uh, b}(\dv - \ve_{\uh, a}) \right)	{\Num (\dv - \ve_{\uh, a})} .
        		\end{align*}
        		   Lemma~\ref{trick17} also implies that $\frac{ \Num(\dv - \ve_{\uh, a}) } { \Num(\dv) } = \frac{ P_{\uh, a}(\dv) }{1 - P_{\uh+a}(\dv - \ve_{\uh, a}) } $  whenever $P_{\uh+ a}(\dv - \ve_{\uh, a})  <1$. Substituting this into the previous identity, we get
        		\bee 
        			\sp_{a, \uh, b } (\dv) = \frac{P_{\uh, a }(\dv)}{1-P_{\uh, a}(\dv-\ve_{\uh, a})} \left(P_{\uh, b}(\dv - \ve_{\uh, a}) - \sp _{a, \uh, b}(\dv - \ve_{\uh, a}) \right),
        		\ee
        		as required.
            \end{proof}

        \section{Sparse hypergraphs} \label{sec:sparsecase}

                We now present the proof of~\thref{t:sparseCase}. Under the assumptions of the Theorem, we only need two iterations of the recursive relations for a sufficiently accurate estimate for  $R_{ab}(\dv)$, the ratio between probabilities of two adjacent sequences. 
                Given a sequence $\dv$ we write  $M_1(\dv)=\sum_v d_v$; recall that $\Delta(\dv)=\max_v d_v$.
                \begin{proof}[Proof of \thref{t:sparseCase}]
                  Let
                $$\Delta_1= 2 \Delta_*+ \log^3 n $$
                and define  $\D^+$ 
                to be the set of all sequences $\dv\in \Z^n$ 
                with $\Delta(\dv)\le    \Delta_1$ and $M_1(\dv)=km$.
                 Recall that the $\Lh$-distance was defined in Section~\ref{sec:preliminaries} and that $\Lh(\ve_{\uh}) =1$ for $|K|=k$.  For an integer $ r\ge 0$, denote by $Q^0_{r}$ (or $Q^1_{r}$) the set of sequences $\dv$ in $\Z^n$ that  have $\Lh$-distance at most $r$ from  some sequence in $\D^+$ and satisfy $M_1(\dv) \equiv 0 \pmod{k}$ (or $M_1(\dv) \equiv 1 \pmod{k}$ respectively). Let $\nu = \frac{2\Delta_1}{m} =\frac{2\Delta_1 k}{dn} =  O\left(\frac{\pseta}{\log^2 n} \right)$, recalling the definition of $\pseta$ and the assumption $\pseta < 1$ from the theorem statement. 
                
                 We first  estimate the ratios $R_{ab}(\dv)$ for $\dv \in Q_1^1$ by iterating the recursive relations from \thref{t:recrel}. The following property of the formulae (\ref{eq:Rfixed}\,--\,\ref{eq:Sfixed}) will be important: if the argument is $\dv$, then the sequences referred to in the formula are at $\Lh$-distance at most 1 from $\dv$.

                \begin{claim}\thlab{RSparse}
                Uniformly for  all $\dv \in  Q^{1}_{1}$ and for all $a,b\in [n]$,
                $$R_{ab} (\dv) = \frac{d_a}{d_b}\left(1 + (d_a - d_b) \cdot \frac{k-1}{M_1(\dv)} \right) \rel{(k-1) \nu^2 + \Delta_1 \nu^{k-1}}.$$
                \end{claim}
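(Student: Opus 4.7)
The plan is to evaluate $B(a,b,\dv-\eb)$ and $B(b,a,\dv-\ea)$ in~\eqref{eq:Rfixed} to the precision demanded by the claim, then take the ratio.  First, for any $\dv\in Q^1_1$, all degree sequences appearing in \eqref{eq:Rfixed}--\eqref{eq:Sfixed} lie within $\Lh$-distance $2$ of $\D^+$ and are $k$-graphical by~\thref{prop:kgraphical}(ii); here the hypothesis $\pseta<1$ comfortably implies $k\Delta_1^{1+1/(k-1)}\le \tfrac12\,dn$.  Hence~\thref{l:simpleSwitching} applies uniformly and gives $P_L(\dw)\le 2(k-1)!\,\Pid{L}(\dw)/M_1(\dw)^{k-1}$ for every edge $L$ and every such $\dw$.

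Next I bound the path contribution to $B$ using~\thref{t:recrel}(c).  Since every $P$-value in our range is at most $1/2$, the recursion yields $\sp_{a,K,b}(\dw)\le 2P_{K+a}(\dw)P_{K+b}(\dw)$.  Combined with the switching bound and with $\sum_{K\in\binom{V\setminus\{a,b\}}{k-1}}\prod_{v\in K}d_v^2\le \Delta_1^{k-1}M_1^{k-1}/(k-1)!$, this gives
\[
\frac{1}{d_a}\sum_{K\in\binom{V\setminus\{a,b\}}{k-1}}\sp_{a,K,b}(\dv-\eb)=O\!\left(\Delta_1\nu^{k-1}\right),
\]
which matches the second term in the claimed error.

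The main step is a two-sided estimate
\[
\sum_{K\in\binom{V\setminus\{a,b\}}{k-2}}P_{a+K+b}(\dw)=\frac{(k-1)\,w_a\,w_b}{M_1(\dw)}\bigl(1+O((k-1)\nu)\bigr)
\]
for $\dw\in\{\dv-\ea,\dv-\eb\}$, where $w_v$ denotes the $v$-th component of $\dw$.  The factor $2$ in~\thref{l:simpleSwitching} is too crude; I would eliminate it by refining the proof of that lemma, showing that the reverse switching count from a graph $G'$ not containing $L=a+K+b$ equals $\Pid{L}(\dw)((k-1)!)^k\bigl(1-O(\nu)\bigr)$.  This uses the already-established upper bound on $P$-values to count invalid reverse switchings (coinciding $J_j$'s, or a ``row edge'' $K_i$ already present in $G'$); the forward count is retained as in the original proof.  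Substituting $\dw\in\{\dv-\ea,\dv-\eb\}$ and adding the path bound produces
\[
B(b,a,\dv-\ea)-B(a,b,\dv-\eb)=\frac{(k-1)(d_a-d_b)}{M_1(\dv)}+O\!\left((k-1)\nu^2+\Delta_1\nu^{k-1}\right).
\]

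Finally, since $B(a,b,\dv-\eb), B(b,a,\dv-\ea)=O(\nu)$ by~\thref{l:simpleSwitching}, the expansion $\tfrac{1-x}{1-y}=1+(y-x)+O(x^2+y^2)$ in~\eqref{eq:Rfixed} contributes at most $O(\nu^2)\le O((k-1)\nu^2)$ additional error, and the claimed formula for $R_{ab}(\dv)$ follows.  The main obstacle is the refined switching analysis: removing the factor $2$ in~\thref{l:simpleSwitching} requires a careful accounting of invalid reverse switchings, and this is where the $(k-1)\nu$ internal error in the codegree estimate ultimately arises.
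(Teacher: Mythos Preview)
Your overall plan---bound the path sum by $O(\Delta_1\nu^{k-1})$, estimate the edge-sum in $B$ to leading order, then expand $\tfrac{1-x}{1-y}$---is the same as the paper's. The gap is in how you obtain the two-sided estimate for $\sum_K P_{a+K+b}(\dw)$.

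You propose to tighten the reverse switching count in \thref{l:simpleSwitching} while ``retaining the forward count as in the original proof.'' But the forward count in that switching already carries an unavoidable relative defect of order $k^2\Delta/m=\Theta(k^2\nu)$: when choosing the $i$-th auxiliary edge $K_i$ one must avoid all edges meeting the $i(q+1)$ vertices used so far, and summing over $i=1,\dots,q$ gives a total shortfall of order $q(q+1)^2\Delta/m$. No refinement of the reverse count can beat this, so the two-sided switching bound on an individual $P_L$ has error $\Theta(k^2\nu)$, not the $O((k-1)\nu)$ you claim. Propagated into $B$ this yields $R_{ab}$ with error $O(k^2\nu^2)$, a factor $k$ worse than the statement of the claim; downstream, the telescoping step needs $km\delta=O(\psi_k)$, and this extra factor of $k$ breaks that comparison when $k$ grows.

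The paper avoids this by a bootstrap through recursion~(b) rather than through a refined switching. From the crude switching bound it first gets $B=O(\nu)$ and hence $R_{ab}(\dv')=\tfrac{d_a}{d_b}(1+O(\nu))$ on $Q_3^1$. It then substitutes this crude $R$ into~\eqref{eq:Pfixed}: since $R_{B,A}$ is a product of only $q=k-1$ single-vertex ratios, each $1+O(\nu)$, the resulting estimate is $P_{A+v}(\dv)=D_{A+v}(\dv)/\tilde M_q(\dv;v)\cdot(1+O(q\nu))$ on $Q_2^0$, with exactly the $(k-1)\nu$ internal error the claim requires. Feeding this back into~\eqref{eq:bad} gives $B(a,b,\dv-\eb)=\tfrac{(d_b-1)(k-1)}{M_1(\dv)}+O((k-1)\nu^2+\Delta_1\nu^{k-1})$ and hence the claim. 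The point is that the $(k-1)\nu$ precision comes from multiplying $k-1$ ratio factors in~\eqref{eq:Pfixed}, whereas the $k$-edge switching of \thref{l:simpleSwitching} intrinsically costs $k^2\nu$; using~\eqref{eq:Pfixed} is not optional here.
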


                 \begin{proof}
                   Since our statement is asymptotic, we assume that $n$ is sufficiently large in our estimates. The hypothesis $\pseta <1$ implies that $m> \log^9 n$.  Throughout the proof, we use the fact that for all $\dv \in Q^0_{ 5} \cup Q^{1}_5$, 
                        \bee    
                            \Delta (\dv) \leq \Delta_1 + 5 
                            \quad \text{and} \quad |M_1(\dv) - km| \leq 5k.
                        \ee 
                        In particular, 
                        \bee  \label{eq:nu}
                        	 \frac{k \Delta(\dv)}{M_1(\dv)} \leq \frac{2 \Delta_1}{m}= \nu.
                        \ee
                        
                The following notation and simple inequalities will be useful.	Recall that $\Pid{\uh} (\dv)= \prod_{v \in \uh} d_v$ for $K \subset V$ and let $\tilde{M}_{j} (\dv) = \sum_{\uh \in \binom{V}{j}} \Pid{\uh} (\dv)$. Specifically, $M_1(\dv) = \tilde{M}_1(\dv) = \sum_{v \in V} d_v$. When the argument~$\dv$ is clear from the context, we suppress it from the notation.
                    We have
                    \bee
                        \lab{eq:moments}
                        \tilde{M}_j \leq \frac 1j \tilde{M}_{j-1} \tilde{M}_1    \text{\quad and \quad} \tilde{M}_j = \frac 1j \tilde{M}_{j-1}\tilde{M}_1 \relf{j \Delta}{\tilde{M}_1}.
                    \ee
                    To see this, simply expand the right-hand side and absorb the monomials which are quadratic in $d_v$ for some $v$ into the error term to obtain
                    \begin{align*}
                        \tilde{M}_1 \tilde{M}_{j-1} & = \left( \sum_{v \in V} d_v \right) \sum_{\uh' \in \binom{V}{j-1}} \Pid{\uh'} (\dv)
                                     =j \sum_{\uh \in \binom{V}{j}}\Pid{\uh}(\dv) + O(j \Delta)\sum_{\uh' \in \binom{V}{j-1}} \Pid{\uh'}(\dv).
                    \end{align*}
                    Since two vertices, say $a$ and $b$, sometimes need to be omitted from the summation, we introduce
                    $$\tilde{M}_j(\dv; a, b) = \sum_{\uh \in \binom{V \setminus 
                \{a, b \}}{j}} \Pid{\uh}(\dv) \quad \text {and} \quad \tilde{M}_j(\dv; a) = \tilde{M}_j(\dv; a, a).$$
                Note that
                \bel{eq:mab}
        	 \tilde{M}_j(\dv; a, b) = \tilde{M}_j(\dv) \relf{\Delta \tilde{M}_{j-1}}{\tilde{M}_j} 
	 =\tilde{M}_j(\dv)\rel{\nu}.
        	\ee

                We start by showing that any sequence $\dv' \in Q^0_{5}$ is $k$-graphical. This follows from Proposition~\ref{prop:kgraphical} by noting that $M_1(\dv') \geq  km-5k$  and
                $$\frac{k \Delta (\dv')^{1+ 1/(k-1)}}{M_1(\dv')} = O\left(\frac{ \Delta_1^2 }{m} \right) = O(\pseta) .$$
                 Therefore, the $k$-graphicality hypotheses of~\thref{t:recrel} are satisfied whenever it is applied below.

                 For brevity, we make the substitution $q=k+1$. Let $\dv' \in Q^0_{ 5}$, so that $|M_1(\dv') - (q+1)m| \leq 5(q+1)$. {For $K \in \binom{V}{q+1}$ and large $n$, we have
        			\begin{equation} \label{eq:pswitch}
        			P_\uh(\dv') \leq 2 \frac{q!\Pid{ \uh }(\dv') }{M_1(\dv')^	q} 
        			\leq 2\Delta(\dv') \left( \frac{q\Delta (\dv')}{M_1(\dv')} \right)^q
        			\leq 4\Delta_1 \nu^q
        			\leq 4\Delta_1 \nu^2
        		    = \frac{16\Delta_1^3}{m^2} 
        			\leq \pseta,
        			\end{equation}
        			by  Lemma~\ref{l:simpleSwitching}, definition of $D_K$,~\eqref{eq:nu}, monotonicity in $q$, definitions of $\Delta_1$, $\nu$ and $\pseta$, respectively.}
                
        			Now consider $\dv \in Q_{ 4}^0$. We derive an upper bound on $B(a, b, \dv)$, which was defined in~\eqref{eq:bad}.
        			For $K \in \binom{V}{q}$ and any distinct $a, b \notin \uh$, $P_{K+a}(\dv-\ve_{K+a})< \frac{1}{2}$ by~\eqref{eq:pswitch}, so
        			$$\sp_{a, \uh, b} (\dv) 
        			\leq 2 P_{K+a} (\dv) P_{K+b}(\dv - \ve_{K+a}) 
        			\leq  \frac{8 d_a d_b \Pid{\uh}^2 (q!)^2}{M_1^{2q}}, $$
        			by Lemma~\ref{l:simpleSwitching}.
        			 From~\eqref{eq:moments} with the sequence $(d_v^2)_{v \in V}$, we obtain the inequality
        		$$  q! \sum_{\uh \in \binom{V}{q}} \Pid{\uh}^2 \leq \left(\sum_{v \in V} d_v^2 \right)^q \leq (\Delta (\dv) M_1 (\dv) )^q .
        $$      
       Applying this to the above bound on $\sp_{a, \uh, b}(\dv)$ gives
        			$$ \frac{1}{d_a} \sum_{\uh \in \binom{V \setminus \{a, b\}}{q} } \sp_{a, \uh, b} (\dv) 
        			\leq  \frac{8 d_b q!}{M_1^{2q}} \cdot \left(\Delta M_1 \right)^{q} 
        			\leq 8 \Delta  \cdot \frac{\Delta^{q}q!}{M_1^q} 
        			=  O \left( \Delta_1 \nu^q \right).$$
        	    We use~Lemma~\ref{l:simpleSwitching} and~\eqref{eq:moments} once again to obtain 
        	\bee    \label{eq:spbound}
        			\frac{1}{d_a} \sum_{\uh \in \binom{V \setminus \{a,b \}}{q-1}} P_{a+ \uh +b}(\dv) \leq \frac{2 q!}{d_a M_1^q} \sum_{\uh \in \binom{V}{q-1}} d_a d_b \Pid{\uh} (\dv)\leq \frac{2 \Delta q}{M_1} = O(\nu).
        	\ee
        			 From~\eqref{eq:bad} and the previous two estimates, it follows that $\Bad(a, b, \dv) = O(\Delta_1 \nu^q + \nu) $. Thus, $\Bad(a, b, \dv) =O(\nu)$ since the assumption $\pseta <1$  implies  $\Delta_1^2 <m$.    Using identity~\eqref{eq:Rfixed}, 
        			we get
        			$$R_{ab} (\dv) 
        			= \frac{d_a}{d_b}\rel{\nu}$$
                for $\dv \in Q_{3}^1$.  
        
        			Now we apply~\eqref{eq:Pfixed} to obtain a more precise estimate for the probability $P_{\ua+ v}$. Let $\dv \in Q_{2}^0$, $\ua, \ub \in \binom{V}{q}$ and $v \in V\setminus \ua$, and let $(a_j)_{j \in [k-1]}$, $(b_j)_{j \in [k-1]}$ be the $A$-consistent ordering of $\ua$ and $\ub$. The above estimate for the ratio $R_{ab}$ only depends on $d_a$ and $d_b$. Moreover, in the expression for $R_{\ub, \ua}(\dv - \ev)$ in~\eqref{eq:ratioprod}, any of the sequences $\dv - \ev -\ve_{b_1\cdots b_{j-1}a_{j+1}\cdots a_{q}}$ is at $\Lh$-distance at most 1 from $\dv$, i.e.~in $ Q_3^1$. Hence  $R_{\ub, \ua} (\dv - \ve_v)= \frac{d_{b_1}d_{b_2} \cdots\, d_{b_q}} { d_{a_1} d_{a_2} \cdots\, d_{a_q} }\rel{ q\nu }$. Thus, by~\eqref{eq:Pfixed}, the probability satisfies
        		\bee \label{eq:Psparse}
        			P_{\ua \unn v} (\dv) = d_v \left(\sum_{\ub \in \binom{V -v}{q}} \frac{\Pid{\ub}(\dv) }{ \Pid{\ua}(\dv) } \rel{q \nu } \right)^{-1}
        				= \frac{\Pid{\ua + v} (\dv)}{ \tilde{M}_q (\dv; v)} \rel{q \nu}.
        		\ee
        			We next return to $\Bad(a, b, \dv)$ introduced in~\eqref{eq:bad}. For the path probabilities, the bound  established in~\eqref{eq:spbound} is sufficient. Let $S_2 = V \setminus \{a, b\}$.  Using~\eqref{eq:Psparse}, we get that for $\dv \in Q_{1}^1$,
        			
        			\begin{align*}
        			    \frac{1}{d_a} \sum_{\uh \in  \binom{S_2}{q-1}} P_{a+ \uh + b} (\dv - \ve_b) &= \frac{(d_b-1) }{\tilde{M}_q(\dv - \ve_b; b)} \sum_{\uh \in \binom{S_2}{q-1}} \Pid{\uh} (\dv - \eb )\rel{q \nu}.  
        			\end{align*}
        			Now
                \bean
                \sum_{\uh \in {\binom{S_2}{q-1}}} \Pid{\uh}(\dv - \eb)  = \tilde{M}_{q-1}(\dv - \ve_b; a, b) 
                 =\tilde{M}_{q-1}(\dv - \ve_b)\rel{\nu} 
               = \frac{ q\tilde{M}_{q}(\dv - \ve_b)}{ \tilde{M}_{1}(\dv )} \rel{\nu},
                \eean
                by definition of $\tilde{M}_{q-1}$,~\eqref{eq:mab} and~\eqref{eq:moments}.  Recalling that $\tilde{M}_1(\dv) = M_1(\dv)$, we have 
            $$ \frac{1}{d_a} \sum_{\uh \in {\binom{S_2}{q-1}}} P_{a+ \uh, b} (\dv - \ve_b)  = \frac{(d_b-1)q}{{\tilde{M}_1}(\dv)}\rel{q \nu} = \frac{(d_b-1)q}{M_1(\dv)} + O(q \nu^2) . $$
        Thus, $B(a, b, \dv - \eb) = \frac{(d_b-1)q}{M_1(\dv)} + O(q \nu^2+\Delta_1 \nu^q)  $ and the analogous estimate holds for $B(b,a, \dv - \ea),$ Finally, by~\eqref{eq:Rfixed}, for $\dv \in Q^1_{1}$,
                \begin{align*}
                    R_{ab} (\dv) &= \frac{d_a}{d_b} \cdot \frac{1- (d_b-1)q/M_1( {\dv-\eb}) }{1 - (d_a-1)q/M_1 ({\dv-\ea})}\rel{q \nu^2 + \Delta_1 \nu^q}
                    \\
                    &= \frac{d_a}{d_b}\left(1 + (d_a - d_b) \cdot \frac{q}{M_1(\dv)} \right) \rel{q \nu^2 + \Delta_1 \nu^q} .  \quad \qedhere
                \end{align*}
                \end{proof}
               Having estimated the ratios between probabilities of adjacent degree sequences, we return to the proof of~\thref{t:sparseCase}. Let $d = km/n$. We aim to show that $\pr_{\Dknm}(\dv)\sim H(\dv)$ for $\dv \in \D^+$, where 
                $$H(\dv)=\pr_{\Bknm}(\dv) \widetilde H(\dv) \text{\quad and  \quad}\widetilde H(\dv) = \exp\left( \frac{k-1}{2}\left(1 - \frac{ \sigma^2(\dv)}{d} \right) \right).$$
               
               To this end, the estimate for $R_{ab}(\dv)$ is used in an argument rather similar to that of Lemma 2.1 in~\cite{lw17}. Unfortunately, a technicality specific to $k$-graphs (more precisely, the fact that the correction term $\tilde{H}(\dv)$ is unbounded in $\D^+$) does not allow a   black-box  application of the existing lemma. For the remainder of the proof, we only consider degree sequences in $\D^+$. The first step is to establish that whenever $\dv-\ea$ and $\dv-\eb$ are elements of $\D^+$, then
                \bel{eq:target1}
                \frac{\Pr_{\Dknm}(\dv-\ea)}{\Pr_{\Dknm}(\dv-\eb)} =e^{O(\delta)}\frac{H(\dv-\ea)}{H(\dv-\eb)}
                \ee
                with
                $$\delta = 
                \begin{cases}
                    \frac{\Delta_1^3}{m^2}, & k = 3, \\
                    \frac{k \Delta_1 ^ 2}{m^2}, & k \geq 4,
                \end{cases} $$
                where  the constant implicit in $O()$ is independent of $\dv$.  Observe that $\pseta > km\delta$, so $\delta=o(1)$.  To simplify the right hand side of~\eqref{eq:target1}, we use the definition $\sigma^2(\dv) = \frac 1n \sum_{v \in V}(d_v-d)^2$ with $d = km/n$  and observe that         
                \begin{align} \non
                 \frac{H(\dv-\ea)}{H(\dv-\eb)}&=   \frac{d_a \left(\binom{n-1}{k-1}- d_b+1 \right)}{d_b \left( \binom{n-1}{k-1} -d_a + 1 \right)}\exp\left( \frac{k-1}{2dn}(2d_a - 2d_b) \right) \\
                \non
                &= \frac{d_a }{d_b } \exp\left( \frac{(k-1)(d_a - d_b)}{dn} \right) \relf{\Delta_1}{ \binom{n-1}{k-1} } \\
               & = \frac{d_a}{d_b} \left( 1+ \frac{(k-1)(d_a-d_b)}{dn} + O\left( \delta \right) \right)     \label{eq:ratioSparse}
                \end{align}
                 for all  $\dv\in Q_1^1$ 
                by~\eqref{eq:condbinom}
                , where we used
                $\exp \left(\frac{k-1}{dn}(d_a - d_b) \right) = 1+ \frac{(k-1)(d_a-d_b)}{dn} + O\left( \frac{\Delta_1^2}{m^2}\right)$ {(recalling that $\Delta_1 = o(m)$)} and $\frac{\Delta_1}{\binom{n-1}{k-1}} = O( \delta)$;  {the latter is   immediate for $k=3$, whilst if $k>3$ then $\Delta_1 / n^3= \Delta_1 d^3n^{-3}k^{-3} = O((\Delta_1^2/m^2)\cdot(\Delta_1^2/m))=O(\delta)o(1)$ considering that $\psi_k=o(1)$ (see~\eqn{eq:sparse4}).}   
                 Combining \eqn{eq:ratioSparse} with \thref{RSparse} and noting that the error from Claim~\ref{RSparse} satisfies $q\nu^2 + \Delta_1 \nu^q = O(\delta)$, we conclude that whenever $\dv-\ea$ and $\dv - \eb$ lie in $\D^+$,
                 \bee \non
                 \frac{  \pr_{\Dknm}(\dv-\ea)}{ \pr_{\Dknm}(\dv-\eb) } = R_{ab}(\dv) = e^{O(\delta)}\frac{  H(\dv-\ea)}{ H(\dv-\eb)}.
                \ee
              Thus we have~\eqref{eq:target1}.

               If $d$ is an integer, let $\dv_{\reg}  = (d, d, \dots, d)$. Note that, in this case,  $ \dv_{\reg}\in \D^+$ and, crucially, 
    for any sequence $\dv \in \D^+$, there is a sequence of sequences $\dv=\dv_1,\dv_2,\ldots, \dv_r=\dv_{\reg} \in \D^+$ with $r \leq km$ in which consecutive sequences are adjacent.  
                By `telescoping'~\eqref{eq:target1} {and using $km\delta < \pseta < 1$}, we deduce that for $\dv \in \D^+$,
              \bel{eq:telescope}
                \frac{\pr_{\Dknm}(\dv)}{H(\dv)} = c_1 \rel{km \delta},
              \ee
        where $c_1 =  \pr_{\Dknm}(\dv_{\reg})/H(\dv_{\reg})$. 
         On the other hand, if $d$ is not an integer, then  we reach a similar conclusion by {defining  $\dv_{\reg}$ to be an appropriate sequence in} $\{ \lfloor d \rfloor, \lfloor d \rfloor +1\}^n$.
             
             In determining $c_1$ asymptotically, the key ingredient is that the correction term $\tilde{H}(\dv)$ is concentrated around 1 in both $\Bknm$ and $\Dknm$. 
       For later use, we isolate the following claim, which {essentially} implies Theorem~\ref{t:sparseCase}.      
       Recall that $d=km/n$ and that $\Delta_*$ is an upper bound on $\Delta(\dv)$ for a sequence $\dv$ with average $d$, by assumption of \thref{t:sparseCase}. It follows that $d\le \Delta_*.$
       \begin{claim}\thlabel{cor:binapp_sparse}
       Let 
       $$ \W = \left \{ \dv \in \D^+:  \left| \sigma^2(\dv)-d\right| \leq  d {\xi} \right \},$$  
       where   
       $$
       \xi = \frac{k  \log^2 n}{\sqrt{n}} + \sqrt{\frac{k^3 \log^5 n}{nd}} .
       $$
        Then $\pr_{\Dknm}(\W)= 1- O \left(n^{-\omega(1)}\right)$ {and}  $\pr_{\Bknm}(\W) = 1 - O \left(n^{-\omega(1)}\right)$.
        Moreover,  uniformly for $\dv  \in \W$,  
        \bel{new}
        \pr_{\Dknm}(\dv) = \pr_{\Bknm}(\dv) \rel{\sqrt{\pseta}}  .
        \ee
                \end{claim}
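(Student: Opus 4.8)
The plan is to use the telescoping identity~\eqref{eq:telescope} together with the concentration of the correction factor $\widetilde H(\dv)$ to pin down the constant $c_1$, and then read off~\eqref{new}. First I would deal with the probability bound on $\W$. By \thref{l:sigmaConc}\eqref{itm:sigmaconc} (applied separately in $\Dknm$ and in $\Bknm$, via cases (a) and (b)), with $\beta$ chosen to grow like $\omega(1)$, one gets $\size{\sigma^2(\dv) - \Var d_1} \le 2d(\beta k\log n/\sqrt n + \sqrt{(\beta k\log n)^3/(nd)})$ with probability $1 - O(n^{-\omega(1)})$; absorbing $\beta$ into the $\log$-powers this is exactly the event defining $\W$ up to the discrepancy $\size{\Var d_1 - d}$, which by the second part of \thref{l:sigmaConc}\eqref{itm:sigmaconc} is $O(d(k/n + \mu)) = O(d\xi)$ since $k\mu = O(\psi_k) = o(1)$ and $k/n$ is dominated by $k\log^2 n/\sqrt n$. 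One must also check $\D^+$ itself is typical: sequences with $\Delta(\dv) > \Delta_1 = 2\Delta_* + \log^3 n$ are excluded, and by \thref{l:sigmaConc}\eqref{itm:degconc} with a union bound the maximum degree exceeds $d + \log^3 n \le \Delta_1$ only with probability $O(n\exp(-\min\{\log^6 n/(3d), \log^3 n/3\})) = O(n^{-\omega(1)})$ in both spaces (here $d \le \Delta_*$ and the hypothesis $\psi_k < 1$ forces $\Delta_* = o(m)$, keeping this exponent genuinely $\omega(1)$). Hence $\pr_{\Dknm}(\W), \pr_{\Bknm}(\W) = 1 - O(n^{-\omega(1)})$.

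Next I would evaluate $c_1$. Summing~\eqref{eq:telescope} over $\dv \in \W$ gives $\sum_{\dv\in\W}\pr_{\Dknm}(\dv) = c_1(1 + O(km\delta))\sum_{\dv\in\W} H(\dv)$, and the left side is $1 - O(n^{-\omega(1)})$. Writing $H(\dv) = \pr_{\Bknm}(\dv)\widetilde H(\dv)$ and noting that for $\dv \in \W$ we have $\size{\sigma^2(\dv)/d - 1} \le \xi = o(1)$, so that $\widetilde H(\dv) = \exp(\tfrac{k-1}{2}(1 - \sigma^2(\dv)/d)) = \exp(O(k\xi)) = 1 + O(k\xi)$ --- here one checks $k\xi = O(k^2\log^2 n/\sqrt n + \sqrt{k^5\log^5 n/(nd)}) = O(\sqrt{\psi_k})$ using $d \le \Delta_*$, $m = dn/k$ and the definition~\eqref{eq:sparse4} --- we get $\sum_{\dv\in\W} H(\dv) = (1 + O(\sqrt{\psi_k}))\sum_{\dv\in\W}\pr_{\Bknm}(\dv) = (1 + O(\sqrt{\psi_k}))(1 - O(n^{-\omega(1)}))$. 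Combining, and recalling $km\delta < \psi_k < 1$, yields $c_1 = 1 + O(\sqrt{\psi_k})$. Plugging this back into~\eqref{eq:telescope} and using $\widetilde H(\dv) = 1 + O(\sqrt{\psi_k})$ once more for $\dv \in \W$ gives $\pr_{\Dknm}(\dv) = c_1 H(\dv)(1 + O(km\delta)) = \pr_{\Bknm}(\dv)(1 + O(\sqrt{\psi_k}))$ uniformly on $\W$, which is~\eqref{new}.

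Finally, for the full statement of \thref{t:sparseCase} itself (as opposed to Claim~\ref{cor:binapp_sparse}) one does not restrict to $\W$: the telescoping~\eqref{eq:telescope} holds for every $\dv \in \D^+$, and once $c_1 = 1 + O(\sqrt{\psi_k})$ is known, substituting $\widetilde H(\dv) = \exp(\tfrac{k-1}{2}(1 - \sigma^2(\dv)/d))$ and rewriting $\sigma^2(\dv)/d = n\sigma^2(\dv)/(km)$ gives the claimed formula with relative error $\exp(O(km\delta + \sqrt{\psi_k})) = 1 + O(\sqrt{\psi_k})$ for all $\dv$ with $\Delta(\dv) \le \Delta_*$ (such sequences lie in $\D^+$ since $\Delta_* \le \Delta_1$). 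I expect the main obstacle to be bookkeeping rather than conceptual: one must verify that \emph{every} error term encountered --- $km\delta$, $k\xi$, $\size{\Var d_1 - d}/d$, and the error $q\nu^2 + \Delta_1\nu^{k-1}$ inherited from \thref{RSparse} --- is $O(\sqrt{\psi_k})$, separately in the $k = 3$ and $k \ge 4$ regimes and using only $\Delta_* \ge d$, $m = dn/k$ and $\psi_k < 1$; the case $k = 3$ is where the $\log^9 n$ term in $\psi_3$ (rather than $\log^4 n$) is needed to dominate $\sqrt{k^3\log^5 n/(nd)}$ when $d$ is as small as $\log^3 n$, and getting these polylog exponents to match is the delicate point. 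A secondary subtlety is ensuring the concentration estimates of \thref{l:sigmaConc} are available in $\Dknm$ rather than only $\Hknm$: since the number of edges $m$ is fixed, $\Dknm$ \emph{is} the degree sequence of $\Hknm$, so case (a) applies directly.
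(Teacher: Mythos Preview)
Your approach is essentially the same as the paper's, and the argument for pinning down $c_1$ by summing~\eqref{eq:telescope} over $\W$ is correct. There is, however, one concrete gap in your treatment of the event $\{\Delta(\dv)\le\Delta_1\}$.

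You apply \thref{l:sigmaConc}\eqref{itm:degconc} with deviation $\alpha=\log^3 n$ and claim
\[
n\exp\bigl(-\min\{\log^6 n/(3d),\,\log^3 n/3\}\bigr)=O\bigl(n^{-\omega(1)}\bigr).
\]
But $\log^6 n/(3d)$ is $\omega(\log n)$ only when $d=o(\log^5 n)$; in the sparse regime $d$ can be as large as $n^{1/2-o(1)}$ (for $k=3$, say, the constraint $\psi_3<1$ permits $d\sim\Delta_*\sim n^{1/2-\eps}$), and then $\log^6 n/(3d)\to 0$, so your bound degenerates to $O(n)$. Your parenthetical remark that $\Delta_*=o(m)$ ``keeps this exponent genuinely $\omega(1)$'' does not address this: $\Delta_*=o(m)$ says nothing about $d$ versus $\log^5 n$.

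The fix is to let the deviation scale with $d$: take $\alpha=\sqrt{\Delta_*\log^3 n}$ instead. Then $\alpha^2/(3d)\ge\log^3 n/3$ (using $\Delta_*\ge d$) and $\alpha/3\ge\log^{3/2}n/3$, both $\omega(\log n)$, and one still has $d+\alpha\le\Delta_*+\sqrt{\Delta_*\log^3 n}\le 2\Delta_*+\log^3 n=\Delta_1$ by the AM--GM inequality. This is precisely what the paper does. With this correction, the rest of your argument goes through as written.
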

            \begin{proof}[Proof of Claim~\ref{cor:binapp_sparse}]
                    Recall that for $\dv \in  \D^+$ we have $M_1(\dv)=km$ and $\Delta(\dv)\leq    \Delta_1=    2 \Delta_*+ \log^3 n$.
               To prove the claim, we first estimate the probability of $\D^+$ in $\Dknm$ by applying~\thref{l:sigmaConc}(i) with $\alpha =  \Delta_* \log^3 n >d \log^3 n$.  By definition of $\Delta_1$, we have that $ \Delta_1 - \Delta_* = \Delta_* + \log^3 n > \sqrt{ \Delta_* \log^3 n}$, so $\Delta_1 \ge \Delta_* + \sqrt {\Delta_*\log^3 n} \geq  d+ \sqrt {\Delta_*\log^3 n}$. 
                Thus, for   $\dv\in  \Omega$ and $v \in V$, 
                $$
                 \pr_{\Dknm}(d_v > \Delta_1)\le  \pr_{\Dknm}\big(d_v>d+ \sqrt {\Delta_* \log^3 n}\big)    = O(n^{- \omega(1)} )
                $$ 
                by~\thref{l:sigmaConc}(i) (and noting $\Delta_*\to\infty$).
                 The union bound, applied to each $v \in V$, now gives $\pr_{\Dknm}(\D^+) =  1- O \left(n^{-\omega(1)}\right)$. 
                        
                Next, if $\dv$ is chosen according to $\Dknm$  then         $\sigma^2 (\dv) = d(1 \pm \xi)$ with probability $1-O \left(n^{-\omega(1)}\right)$ by~\thref{l:sigmaConc}(ii) with  {$\beta = \sqrt{\log n }$}.                 %
            By definition of $\W$ and the union bound, we have
                 $$\pr_{\Dknm}(\W)=  1- O \left(n^{-\omega(1)}\right) .$$
       The same argument shows that $\pr_{\Bknm}(\W) = 1 -O \left(n^{-\omega(1)}\right)$.
           
        It is immediate that for $\dv \in \W$, $\tilde H (\dv) = e^{O(k  \xi)} $. Since $k \xi < \sqrt{\pseta}<1$ for large $n$,
                 \bee
                    \sum_{\dv \in \W} H(\dv) = \rel{k \xi}\sum_{\dv \in \W} \pr_{\Bknm}(\dv) 
                    =  1 - O(n^{-\omega(1) }+ k\xi).
                 \ee
                
               Summing both sides of~\eqref{eq:telescope} over $\W$ and using the above estimates, we get  
        $$
       \pr_{\Dknm}(\W) =\sum_{\dv \in \W} \pr_{\Dknm}(\dv) =  c_1 \rel{km \delta}\sum_{\dv \in \W} H(\dv)  = c_1 \rel{km \delta + k \xi}.
        $$
               From this estimate and $\pr_{\Dknm}(\W) =  1 - O \left(n^{-\omega(1)}\right) $, we deduce that $c_1  = 1 + O \left( km \delta + k \xi + n^{-\omega(1)} \right) = 1 + O \left(\sqrt{\pseta} \right)$, where the last inequality holds by definition of $\pseta$. 
       Hence,~\eqn{eq:telescope} implies that for $\dv \in \D^+$,  
        \bee
            \label{eq:telescope1}
                \pr_{\Dknm}(\dv) = \pr_{ \Bknm } (\dv) \tilde{H}(\dv) \rel{{\sqrt{\pseta} } }.
        \ee
        In particular, for $\dv \in \W \subset \D^+$, $ \tilde{H}(\dv) =  e^{o(\sqrt{\pseta})}$, which implies~\eqn{new} and completes the proof of the claim. 
        \end{proof} %
        The theorem follows from~\eqn{eq:telescope1} considering the definition of $\tilde{H}(\dv)$.
        \end{proof}

    \section{Operators}
                \label{sec:operators}
                Motivated by the recursive relations from Section~\ref{sec:recursions}, we define operators $\Pc, \Rc$ and $\Sc$. The functions $P, R$ and $Y$ are invariant under $\Pc, \Rc$, $\Sc$ in a sense which is formalised later.

                Recall that $V = [n]$, and define $\Vp = \{(\uh, a): \uh \in \binom{V}{k-1}, a \in V \setminus \uh \}$, as well as $\Vs = \{(a, \uh, b): \uh \in \binom{V}{k-1}, a, b \in V \setminus \uh, a \neq b \}$. Let $\pv: \Vp \times \Z^n \to \R$, $\rv: V^2 \times \Z^n \to \R$ and $\sv: \Vs  \times \Z^n \to \R$. We write $\pv_{A,v}(\dv)$ for $\pv(A, v, \dv)$, $\rv_{ab}(\dv)$ for $\rv(a, b, \dv)$ and $\sv_{a,K,b}(\dv)$ for $\sv(a, K, b, \dv)$.  
                
            For $A, B \in \binom{V}{k-1}$, define $$\rv_{\ub, \ua} (\dv)= \prod_{j=1}^{k-1} \rv_{b_j a_j} (\dv  - \ve_{b_1b_2\cdots b_{j-1}a_{j+1}\cdots a_{k-1}}),$$
            with respect to the $A$-consistent ordering of $\ua$ and $\ub$.

              The operators $\Pc$, $\Rc$ and $\Sc$ are defined as follows.  The arguments of the operator $\Pc$ are functions $\pv$ and $\rv$ as above, and its output $\Pc(\pv, \rv)$ is a function $\Vp \times \Z^n  \to  \R$. Let $\dv \in \Z^n$. For $(\ua, v) \in \Vp$, let
        	\begin{equation} \label{eq:probop}
        	\Pc(\pv, \rv)(A, v, \dv) =d_{v} \Bigg(\sum_{\ub \in \binom{V \setminus \{v \}}{{k-1}}} \rv_{\ub, \ua} (\dv-\ve_{v})
        	\frac{1-\pv_{\ub, v}(\dv - \ve_{\ub ,  v} )}
        	{1-\pv_{\ua ,  v}(\dv - \ve_{ \ua  ,  v} )}\Bigg)^{-1}.
        	\end{equation} 
            Similarly, $\Sc(\pv, \sv)$ is a function $\Vs  \times \Z^n \to \R$. 
             For $(a, \uh, b) \in \Vs$, we define 
                            \bee    \label{eq:pathop}
                                \Sc(\pv, \sv)(a, \uh, b  , \dv)= 
                                \frac{\pv_{a ,  \uh}(\dv)}{ 1- \pv_{a ,  \uh}(\dv - \ve_{a ,  \uh})} \left(\pv_{b ,  \uh}(\dv - \ve_{a ,  \uh}) - \sv_{a, \uh, b}(\dv - \ve_{a ,  \uh}) \right).
                            \ee
            Let $\dw \in \Z^n$ and $i \neq j \in V$. We set $\bade(\pv)( i, i, \dw) = \badp(\sv)( i, i, \dw)=0$,
            \begin{align} \label{eq:badop}
            	\bade(\pv)( i, j, \dw)  = 
            	 \frac{1}{d} 
            	 \sum_{ \uh \in \binom{V \setminus \{i, j \} }{k-2}}  \pv_{i + \uh ,  j}(\dw)  
            	 \text{ \quad and \quad } 
            	 \badp(\sv)( i, j, \dw) =
            	\frac{1}{d}\sum_{ \uh \in \binom{V \setminus \{i, j \} }{{k-1}} }  \sv_{i ,  \uh ,  j}(\dw),
        	\end{align}
             Let $d = \frac 1n \sum_{v \in V}d_v$ and $a, b \in V$. The ratio operator is then defined as 
        	\begin{align} \lab{eq:ratop}
        	 \Rc(\pv, \sv)(a,b, \dv) &=  \frac{ d_a - d \left( \bade(\pv)( a, b, \dv - \ve_b)+ \badp(\sv)( a, b, \dv - \ve_b)  \right)}
        	 { d_b -d \left( \bade(\pv)( b, a, \dv-\ve_a)+ \badp(\sv)( b, a, \dv-\ve_a)  \right)}, 
        	\end{align}
        	provided that the denominator in the above formula is non-zero. 
        	Finally, we introduce the compositional operator $$\Cc(\pv, \sv)= \left(\Pc(\pv,\Rc(\pv, \sv)), \Sc(\Pc(\pv,\Rc(\pv, \sv)), \sv ) \right).$$ 
        	The crucial properties of $\Cc$ are that $(P,Y)$ is a fixed point of $\Cc$ whenever the hypotheses of \thref{t:recrel} are satisfied (we prove a precise statement in the next section); and that $\Cc$  is \emph{contractive}, in a sense we are about to formalise.
        	
                For an integer $r$ denote by $Q^0_{r}(\dv)$ (or $Q^1_{r}(\dv)$) the set of sequences $\dv'$ in $\Z^n$ that  have $\Lh$-
                distance at most $r$  from  $\dv $ and satisfy $M_1(\dv) \equiv 0 \pmod{k}$ (or $M_1(\dv) \equiv 1 \pmod{k}$ respectively).
                We use $1\pm \xi$ to denote a quantity between $1-\xi$ and $1+\xi$ inclusively.

                We introduce more notation to avoid some quantifiers. Let $\mathbf{f} : W \times \Z^n \to \R $ be a function, where $W$ is some set. As above, we abbreviate $f(x, \dv) = f_x(\dv)$ for $x \in W$. For any $\dv$ in $\Z^n$, we introduce the `projection'
                    \begin{align}
                        {\bf f }\proj (\dv) :  W & \to  \R \\
                                        x & \mapsto {\bf f}_x(\dv). 
                    \end{align}
                   
                This specifies for instance $\pv \proj (\dv)$ and $\Rc  (\pv, \sv)\proj (\dv)$.
             An (in)equality involving ${\bf f}\proj (\dv)$ means that the (in)equality holds for all $x \in W$. For instance, ${\bf f}\proj (\dv) = (1 \pm \xi) {\bf f}' \proj (\dv)$ if ${\bf f}_x (\dv) = (1 \pm \xi) {\bf f}'_x (\dv)$ for all $x \in W$. The following lemma is an intermediate step in showing the contractiveness of $\Cc$.
             %
        \begin{lemma}\thlab{l:errorImplication} 
        Let $n$ and $k \geq 3$ be integers, $\alpha = \frac{k-1}{n-1} = o(1)$ and $\dv=\dv(n)\in\Z^n$ with $d = \frac{1}{n} \sum_{v \in V}d_v$ and $d_v = d\left(1 \pm \frac 12 \right)$ for all $v \in V$.  
        Let $0<\xi_1, \xi \leq 10$, and write $\mu = d \binom{n-1}{k-1}^{-1}$.
                 Let $\pv,\pv' : \Vp \times \Z^n \to \reals$, $\sv, \sv':
                  \Vs \times \Z^n \to \reals$ and  $\rv, \rv': V^2\times \Z^n \to \reals$
                be functions such that for all $\dv'\in Q_1^0(\dv)$,
                 $\pv \proj (\dv') ,\pv' \proj (\dv') \leq 10 \mu < \frac 14$,
                 $\sv \proj (\dv') ,\sv' (\dv) \proj  \leq 10\mu^2$ and
                 $$ \pv \proj (\dv')=\pv' \proj (\dv')(1 \pm \xi_1 ) \quad \text{and} \quad \sv \proj (\dv')=\sv' \proj (\dv')(1 \pm \xi ).$$
             The following estimates hold {uniformly}.
                  \begin{itemize}
                \item[$(a)$]  
                If $dn \equiv 1 \pmod{k}$,  then $ \displaystyle
                \Rc  (\pv, \sv) \proj (\dv ) = \Rc(\pv', \sv') \proj (\dv)  \rel{\mu \xi +\alpha \xi_1 }
                $.
                \item[$(b)$]  
               If $dn \equiv 0 \pmod{k}$ and $\rv \proj (\dv')=\rv' \proj (\dv')(1 \pm (\mu +\alpha)\xi )$ for all $\dv'\in  Q^1_1 (\dv)$ with $(\mu + \alpha)k \xi < 1$,  then
                \begin{align*}
                \Pc(\pv,\rv) \proj (\dv) =\Pc(\pv',\rv') \proj  (\dv) \rel{ (\mu+\alpha)k\xi  + \mu \xi_1 }.
                \end{align*}
                \item[$(c)$] 
                    If $dn \equiv 0 \pmod{k}$, then
                    $ \displaystyle
                \Sc(\pv,\sv) \proj (\dv) =\Sc (\pv',\sv') \proj (\dv)\rel{ \xi_1 + \mu \xi}.$
                \end{itemize}
                  \end{lemma}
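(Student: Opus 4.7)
The strategy is direct: in each of (a)--(c), expand the operator in terms of the input functions, perturb each input according to the hypothesis, and track the resulting multiplicative errors using the smallness bounds $\pv\proj(\dv'), \pv'\proj(\dv') \le 10\mu$ and $\sv\proj(\dv'), \sv'\proj(\dv') \le 10\mu^2$. The routine preliminary step is to bound the sizes of $\bade(\pv)$, $\badp(\sv)$, and the factors $(1 - \pv_{A,v})$ appearing in~\eqref{eq:probop}--\eqref{eq:badop}.

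For (a), the starting observation is that
$$\bade(\pv)(i,j,\dw) \le \frac{10\mu}{d}\binom{n-2}{k-2} = 10\alpha \quad \text{and} \quad \badp(\sv)(i,j,\dw) \le \frac{10\mu^2}{d}\binom{n-2}{k-1} \le 10\mu,$$
with the analogous bounds for $\pv', \sv'$. Termwise comparison using the perturbation hypotheses gives $|\bade(\pv) - \bade(\pv')| \le 10\alpha \xi_1$ and $|\badp(\sv) - \badp(\sv')| \le 10\mu \xi$. Since $d_a = d(1 \pm 1/2)$, the numerator of $\Rc(\pv,\sv)(a,b,\dv)$ equals $d_a(1 + O(\alpha + \mu))$, so the absolute perturbation of size $d(\alpha\xi_1 + \mu\xi)$ produces a multiplicative factor $1 \pm O(\alpha\xi_1 + \mu\xi)$ on the numerator. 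The same estimate applies to the denominator, and the ratio inherits the same relative error, yielding (a).

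For (b), each summand in~\eqref{eq:probop} is a product of $\rv_{B,A}(\dv-\ev)$ with the ratio $(1-\pv_{B,v}(\cdots))/(1-\pv_{A,v}(\cdots))$. By definition, $\rv_{B,A}$ is a product of $k-1$ individual ratios, each perturbed by a factor $1 \pm (\mu+\alpha)\xi$; the assumption $(\mu+\alpha)k\xi < 1$ allows us to apply $(1+x)^{k-1} \le 1 + 2(k-1)|x|$ for $(k-1)|x| \le 1$, so the product is perturbed by $1 \pm O((\mu+\alpha)k\xi)$. The ratio of $(1-\pv)$-terms, whose numerator and denominator are at least $1 - 10\mu \ge 3/4$, is perturbed by $1 \pm O(\mu \xi_1)$. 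Each summand therefore changes by a factor $1 \pm O((\mu+\alpha)k\xi + \mu\xi_1)$, and since all summands are positive the same factor passes through the sum and its reciprocal, establishing (b). For (c), the prefactor $\pv_{a,K}(\dv)/(1-\pv_{a,K}(\dv-\ve_{a,K}))$ has its numerator perturbed by $1\pm\xi_1$ and its denominator (bounded below by $3/4$) perturbed by $1\pm O(\mu\xi_1)$, so the prefactor changes by $1\pm O(\xi_1)$. The bracket $\pv_{b,K}(\dv - \ve_{a,K}) - \sv_{a,K,b}(\dv - \ve_{a,K})$ differs from its $\pv',\sv'$ counterpart by an absolute amount at most $\xi_1 \pv'_{b,K} + \xi \sv'$; since $\sv' = O(\mu^2)$ is dominated by $\pv'_{b,K}$, the bracket is comparable to $\pv'_{b,K}$ and so the relative perturbation of the bracket is $O(\xi_1 + \mu\xi)$. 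Multiplying the two factors gives (c).

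\textbf{Main obstacle.} The most delicate technical point is in (b): propagating the $(\mu+\alpha)\xi$ error through the $(k-1)$-fold product $\rv_{B,A}$ without exponential blowup crucially relies on the hypothesis $(\mu+\alpha)k\xi < 1$. A secondary subtlety appears in (c), where the bracket $\pv_{b,K} - \sv_{a,K,b}$ must remain comparable to $\pv_{b,K}$ for the multiplicative form of the error to be valid; this is automatic under the stated bounds $\pv \le 10\mu$ and $\sv \le 10\mu^2$ once $\mu$ is small, as is the case throughout the intended applications of the lemma.
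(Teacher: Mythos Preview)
Your proposal is correct and follows essentially the same approach as the paper's proof: bound $\bade(\pv)\le 10\alpha$ and $\badp(\sv)\le 10\mu$, use $d_v=d(1\pm\tfrac12)$ to convert absolute perturbations of the numerator and denominator of $\Rc$ into relative ones, propagate the $(\mu+\alpha)\xi$ error through the $(k-1)$-fold product $\rv_{B,A}$ in (b), and treat (c) by separately perturbing the prefactor and the bracket. Your write-up is in fact more detailed than the paper's, which disposes of (b) and (c) in a single sentence each; your identification of the two delicate points (the product blow-up in (b) controlled by $(\mu+\alpha)k\xi<1$, and the need for the bracket in (c) to remain comparable to $\pv'_{b,K}$) is accurate and matches the implicit assumptions used in the paper's applications.
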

                \begin{proof} 
                    The assumptions on $\pv$ and $\sv$ and~\eqref{eq:badop} imply that for $a, b \in V$,
                    \begin{align*}
                        \bade(\pv; a, b, \dv-\eb) &\leq \frac{1}{\mu \binom{n-1}{k-1}} \cdot \binom{n-2}{k-2} \cdot 10 \mu = 10 \alpha  = o(1) \text{ \quad and \quad }\\
                \badp(\sv; a, b, \dv -\eb) &\leq   \frac{1}{\mu \binom{n-1}{k-1}} \cdot \binom{n-2}{k-1} \cdot 10 \mu^2 <
                    10 \mu < \frac 14.
                    \end{align*}
               The same estimates hold for $\pv'$ and $\sv'$. Since $\frac{d}{d_a}< 2$, ~\eqref{eq:ratop} implies that the absolute errors in $\bade$ and $\badp$ give  rise  to a relative error in $\Rc_{ab}$ of the same order of magnitude, and part (a) follows upon taking account of the multiplicative factors $O(\alpha)$ and $O(\mu)$. 
                
                For (b), the hypothesis on $\rv$ implies that $\rv_{\ub, \ua}(\dw) = \rv'_{\ub, \ua}(\dw)\rel{k (\mu + \alpha )\xi}$ for all $\ua, \ub \in \binom{V}{k-1}$ and $\dw \in Q_1^1(\dv)$.  Since $\pv \proj (\dw) \le 10\mu\le  \frac14$ for $\dw \in Q_1^0(\dv)$, the equation for $\Pc(\pv,\rv)$ follows from~\eqref{eq:probop}. 
                
                Part (c) follows immediately from~\eqref{eq:pathop}
                using a similar argument.
                \end{proof}

        	Fix $\Omega{(0)}\se \Z^n $  for the following definitions. For $s \geq 1$, let $ \Omega{(s)}$ denote the set of all $\dv\in \Omega{(0)} $ with $\sum_{v \in V} d_v \equiv 0 \pmod{k}$  for which  $Q_s^0(\dv) \se \Omega{(0)}$. %
        
        \begin{cor}\thlab{c:contraction}
        Let $k(\mu + \alpha)< c$ for $c$ sufficiently small.	Fix $\Omega{(0)}\se \Z^n $, $s \in \Z$ and $\xi <1$. 
        Let $\pv \proj (\dv),\pv'\proj (\dv) \leq 10 \mu$ and $ \sv\proj (\dv), \sv'\proj (\dv) \leq 10\mu^2$ for $\dv \in \Omega(s)$. If $(\pv , \sv )\proj(\dv) =  (\pv' , \sv' )\proj (\dv) (1 \pm \xi)$ for  $\dv \in \Omega(s)$, then $\cC(\pv, \sv)\proj (\dv) = \cC(\pv', \sv') \proj (\dv) (1 \pm e^{-1} \xi)$ for $\dv \in \Omega(s+3)$.
        \end{cor}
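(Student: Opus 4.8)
## Proof proposal for Corollary \ref{c:contraction}

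\textbf{Overview of the approach.} The statement is a straightforward consequence of \thref{l:errorImplication} applied three times, tracking how the relative error propagates through the three nested applications of the operators that make up $\Cc$. Recall that $\Cc(\pv, \sv) = \left(\Pc(\pv,\Rc(\pv, \sv)), \Sc(\Pc(\pv,\Rc(\pv, \sv)), \sv ) \right)$, so we must (i) bound the error in $\Rc(\pv,\sv)$ relative to $\Rc(\pv',\sv')$, (ii) feed this into $\Pc$ together with the error in $\pv$ to control $\Pc(\pv,\Rc(\pv, \sv))$, and (iii) feed the resulting bound into $\Sc$ together with the error in $\sv$. The plan is to verify that the hypotheses of \thref{l:errorImplication} are met at each stage (in particular the magnitude bounds $\pv\proj\le 10\mu<\tfrac14$, $\sv\proj\le 10\mu^2$, and the smallness condition $(\mu+\alpha)k\xi<1$), apply parts (a), (b), (c) in turn, and then check that the three accumulated error factors multiply to something bounded by $e^{-1}\xi$ once $c$ is small enough.

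\textbf{Step-by-step plan.} First I would note that $\Omega(s+3)\subseteq\Omega(s)$ and, more importantly, that for $\dv\in\Omega(s+3)$ we have $Q_3^0(\dv)\subseteq\Omega(s)$; this is exactly what is needed so that when we evaluate the operators at $\dv$ the shifted sequences appearing inside the formulae (which lie in $Q_1^0$ or $Q_1^1$ of points in $Q_2^0(\dv)\subseteq\Omega(s+1)$, etc.) stay inside the region $\Omega(s)$ where the hypotheses on $\pv,\pv',\sv,\sv'$ hold. With $\xi_1=\xi$ as the common starting error, part (a) of \thref{l:errorImplication} gives, for $\dv\in\Omega(s+1)$ with $M_1(\dv)\equiv1\pmod k$,
\begin{equation*}
\Rc(\pv,\sv)\proj(\dv) = \Rc(\pv',\sv')\proj(\dv)\,\rel{\mu\xi+\alpha\xi} = \Rc(\pv',\sv')\proj(\dv)\,(1\pm(\mu+\alpha)\xi')
\end{equation*}
for a suitable absolute constant absorbed so that the relative error is at most $(\mu+\alpha)\xi$ up to a constant; since the hypothesis of part (b) requires exactly a bound of the form $(1\pm(\mu+\alpha)\xi)$ on $\rv$ versus $\rv'$, this matches after choosing $c$ small enough that the implied constant times $(\mu+\alpha)$ is harmless. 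Then part (b), applied with $\rv=\Rc(\pv,\sv)$, $\rv'=\Rc(\pv',\sv')$ and the same $\xi_1=\xi$ for the $\pv$-error, yields
\begin{equation*}
\Pc(\pv,\Rc(\pv,\sv))\proj(\dv) = \Pc(\pv',\Rc(\pv',\sv'))\proj(\dv)\,\rel{(\mu+\alpha)k\xi+\mu\xi}
\end{equation*}
for $\dv\in\Omega(s+2)$ with $M_1(\dv)\equiv0\pmod k$; write the relative error here as $\xi_1'=O\big((\mu+\alpha)k\big)\xi$. Finally part (c), with $\pv\mapsto\Pc(\pv,\Rc(\pv,\sv))$ carrying error $\xi_1'$ and $\sv$ carrying error $\xi$, gives
\begin{equation*}
\Sc\big(\Pc(\pv,\Rc(\pv,\sv)),\sv\big)\proj(\dv) = \Sc\big(\Pc(\pv',\Rc(\pv',\sv')),\sv'\big)\proj(\dv)\,\rel{\xi_1'+\mu\xi}
\end{equation*}
for $\dv\in\Omega(s+3)$. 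Combining the two components of $\Cc(\pv,\sv)$, the overall relative error is at most $\max\{\xi_1',\,\xi_1'+\mu\xi\} = O\big((\mu+\alpha)k+\mu\big)\xi = O\big(k(\mu+\alpha)\big)\xi$.

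\textbf{Closing the argument and the main obstacle.} To finish, observe that since $k(\mu+\alpha)<c$ with $c$ sufficiently small, the constant in the $O(\cdot)$ times $k(\mu+\alpha)$ can be made smaller than $e^{-1}$, so $\Cc(\pv,\sv)\proj(\dv)=\Cc(\pv',\sv')\proj(\dv)(1\pm e^{-1}\xi)$ for all $\dv\in\Omega(s+3)$, as claimed. The routine parts are the error bookkeeping and verifying the magnitude hypotheses carry over (e.g.\ $\Pc(\pv,\Rc(\pv,\sv))\proj$ is itself $\le 10\mu$ on the relevant domain, which follows because $\Pc$ is built to approximate $P_K$ and one has $\pv\proj\le 10\mu$ by assumption plus the smallness of $k(\mu+\alpha)$; alternatively this is not even needed for this corollary since \thref{l:errorImplication} only requires bounds on the \emph{inputs} $\pv,\pv',\sv,\sv'$, not the outputs). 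The one point requiring genuine care — the \emph{main obstacle} — is the domain tracking: one must check that each of the three operator applications is legitimately evaluated only at sequences for which all the shifted arguments $\dv'-\ve_{b_1\cdots b_j a_{j+1}\cdots a_{k-1}}$, $\dv'-\ve_{A,v}$, $\dv'-\ve_{a,K}$ etc.\ stay within $\Omega(0)$ (hence within the region where the hypotheses on $\pv,\sv$ apply), which is precisely why the statement loses three levels, from $\Omega(s)$ down to $\Omega(s+3)$, and why the definition of $\Omega(s)$ via $Q_s^0(\dv)\subseteq\Omega(0)$ was set up as it was.
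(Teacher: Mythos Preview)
Your proposal is correct and follows essentially the same approach as the paper's proof: apply \thref{l:errorImplication}(a), then (b), then (c) in succession, tracking both the shrinking domain $\Omega(s)\supseteq\Omega(s+1)\supseteq\Omega(s+2)\supseteq\Omega(s+3)$ and the accumulated relative error, and finally absorb the implied constants into the choice of $c$ so that the total error is at most $e^{-1}\xi$. One small wording issue: by definition $\Omega(s+1)$ contains only sequences with $M_1(\dv)\equiv 0\pmod k$, so the intermediate step for $\Rc$ should be phrased (as the paper does) in terms of $\dv'\in Q_1^1(\dv)$ for $\dv\in\Omega(s+2)$ rather than ``$\dv\in\Omega(s+1)$ with $M_1(\dv)\equiv 1\pmod k$'', but this is purely notational and your domain tracking is otherwise sound.
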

        \proof
        Let $\dv \in \Omega(s+2)$, $\dv' \in Q^1_1(\dv)$ and $\dv'' \in Q^0_1(\dv') \subset \Omega(s)$. We have $\pv \proj (\dv'')=\pv' \proj (\dv'')(1 \pm \xi)$ and  $\sv \proj (\dv'')=\sv' \proj (\dv'')(1 \pm \xi)$, so~\thref{l:errorImplication}(a) implies 
        $$\Rc(\pv, \sv) \proj (\dv') = \Rc(\pv', \sv') \proj (\dv') \rel{(\mu+\alpha)\xi}.$$
           Furthermore, applying~\thref{l:errorImplication}(b)  with $\rv= \Rc(\pv, \sv)$, $\rv'=\Rc(\pv', \sv')$, $\xi$ replaced by $C\xi$ 
        for a suitable constant $C$ and $c< 1/C$
        gives 
        $$\Pc(\pv, \Rc(\pv, \sv))\proj (\dv) =  \Pc(\pv', \Rc(\pv', \sv'))\proj (\dv) \rel {(\mu + \alpha)k \xi }.$$
        Now, \thref{l:errorImplication}(c) with         $\xi_1 = O((\mu + \alpha)k \xi)$ implies 
         $$\Sc(\Pc(\pv,\Rc(\pv, \sv)), \sv ) \proj  (\dv)= 
         \Sc(\Pc(\pv',\Rc(\pv', \sv')), \sv' ) \proj (\dv) \rel {(\mu + \alpha)k \xi }$$ for $\dv \in \Omega(s+3)$. Taking $c$ sufficiently small so that the error term is at most $e^{-1}\xi$, we get
         $\cC(\pv, \sv)\proj (\dv) =  \cC(\pv', \sv') \proj (\dv)(1 \pm e^{-1}\xi) $ . 
        \qed

        \section{Dense hypergraphs} \label{sec:densecase}
                We start by proving approximations to the ratios $R_{ab}(\dv)$, as well as to the edge and path probabilities in a random $k$-graph with degree sequence $\dv$. The crucial property of our approximation formulae is that they are `close to invariant' under the operator $\cC$ from Section~\ref{sec:operators}. These formulae were initially generated by starting with a crude approximation $\pv_K(\dv) = \mu$,  applying the operator $\cC$ repeatedly $j$ times, and observing the pattern in the terms remaining when those of  size $O(\mu^j)$ were dropped.
        In {this section}, it is convenient to write expressions in terms of $q=k-1$  and to  let    $\alpha = \frac{k-1}{n-1} = \frac{q}{n-1}$.  

 Recall that the average of a sequence $\dv$ is $d = \frac 1n \sum_{v \in V} d_v$ and its spread is $\max_{v \in V} |d_v-d|$.
        	  
        	\begin{lemma}   \thlab{l:ratiosDense} 
        	   There exists a positive constant $c<1$ such that the following holds. Let $n$ and $q =k-1 \geq 2$ be integers, $\eps, d_0 \in \R $, $\alpha = \frac{q}{n-1}$ and $\mu_0 = d_0\binom{n-1}{q}^{-1}$, noting that all these parameters may be functions of $n$. Assume that  $q(\mu_0 + \alpha) <c$, $\eps q  <  c^{1/3} $, $d_0 > \log^3 n$, $\eps^2 d_0 >1$ and $d_0n \equiv 1 \pmod{q+1}$. If $\dv_0 = (d_v)_{v \in V}$ is a sequence with average $d_0$ and spread at most $\eps d_0$, then {uniformly}
        	   \bee
        		R_{ab}(\dv_0) = \frac{d_a \left(\binom{n-1}{q} - d_b \right)}{d_b \left(\binom{n-1}{q} - d_a \right)}
        								\left(1 + \frac{(d_a-d_b)q}{d_0(1-\mu_0)(n-1-q)} \right) \rel{\zeta} 
                \label{eq:ratiostatement}
        		\ee
        		for $a, b \in V$, with 
        			\bee
        			    \zeta = \begin{cases}
                            \mu_0 \eps^{3} + \alpha \eps^{2} + \alpha^2 \eps + \alpha^3,     & q = 2 \\
        			            q^2\left(\mu_0 + \alpha \right) \eps^2
        			            , & q \geq 3.
        			        \end{cases}
                    \ee   
        	\end{lemma}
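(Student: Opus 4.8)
\textbf{Proof proposal for Lemma~\ref{l:ratiosDense}.}
The plan is to use the contractivity of the compositional operator $\Cc$ (Corollary~\ref{c:contraction}) to pin down the functions $P$, $Y$ (and hence $R$) on a neighbourhood of $\dv_0$ by a fixed-point argument, then read off the ratio formula~\eqref{eq:ratiostatement} from the recursive relation~\eqref{eq:Rfixed}. Concretely, I would first fix a base set $\Omega(0)$ of sequences at bounded $\Lh$-distance from $\dv_0$ — large enough that all the $k$-graphicality hypotheses of \thref{t:recrel} hold throughout (this follows from Proposition~\ref{prop:kgraphical}(i) using $d_0>\log^3 n$, the spread bound $\eps d_0$ with $\eps q<c^{1/3}$, and $q(\mu_0+\alpha)<c$), and on which the crude bounds $P_K(\dv)\le 10\mu$ and $Y_{a,K,b}(\dv)\le 10\mu^2$ hold, e.g.\ via Lemma~\ref{l:simpleSwitching}. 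By \thref{t:recrel}, the true pair $(P,Y)$ is a fixed point of $\Cc$ on $\Omega(s)$ for every $s$. I would then introduce the candidate approximations
$$
\hat\pv_{A,v}(\dv) = \mu\left(1 + \frac{q}{d_0}\cdot\frac{n-1}{n-1-q}\sum_{u\in A+v}(d_u-d)\right), \qquad
\hat\sv_{a,K,b}(\dv) = \hat\pv_{a,K}(\dv)\,\hat\pv_{b,K}(\dv)\cdot(1+\text{small correction}),
$$
(the precise forms being those obtained, as the authors say, by iterating $\Cc$ from $\pv_K\equiv\mu$ and discarding terms of order $\mu^j$), and verify by direct substitution that $\Cc(\hat\pv,\hat\sv)\proj(\dv) = (\hat\pv,\hat\sv)\proj(\dv)\,(1+O(\zeta))$ on $\Omega(s)$ for an appropriate $s$ — i.e.\ $(\hat\pv,\hat\sv)$ is an \emph{approximate} fixed point with defect $O(\zeta)$.

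The core of the argument is then the contraction step. Starting from the trivial bound that $(P,Y)$ and $(\hat\pv,\hat\sv)$ agree up to a factor $(1\pm\xi_0)$ with $\xi_0=O(1)$ on $\Omega(s_0)$ (from the crude estimates), I would apply Corollary~\ref{c:contraction} repeatedly: each application shrinks the relative discrepancy by a factor $e^{-1}$ at the cost of passing from $\Omega(s)$ to $\Omega(s+3)$, and each application also incurs the approximate-fixed-point defect $O(\zeta)$ coming from $(\hat\pv,\hat\sv)$ not being exactly invariant. After $O(\log(1/\zeta))$ iterations the geometric part is driven below $\zeta$, the required number of nested levels $\Omega(s)$ is $O(\log(1/\zeta))$ (which is why $\Omega(0)$ must be taken at $\Lh$-distance $\Theta(\log(1/\zeta))$ from $\dv_0$, still a bounded perturbation since $\zeta$ is bounded away from $0$ by a negative power of $n$), and one concludes
$$
(P,Y)\proj(\dv_0) = (\hat\pv,\hat\sv)\proj(\dv_0)\,\rel{\zeta}.
$$
Finally I would substitute these estimates for $P$ and $Y$ into the definition~\eqref{eq:bad} of $B(a,b,\cdot)$ and then into~\eqref{eq:Rfixed}: the $P$-sum over $(k-2)$-sets contributes the main term $\tfrac{1}{d_0}\binom{n-2}{k-2}\mu(1+\cdots)$, which after simplification produces exactly the factor $\binom{n-1}{q}^{-1}(\binom{n-1}{q}-d_a)$ together with the exponential-type correction $1+\tfrac{(d_a-d_b)q}{d_0(1-\mu_0)(n-1-q)}$, while the $Y$-sum over $(k-1)$-sets is of order $\mu$ and folds into the error; careful bookkeeping of the two cases $q=2$ and $q\ge 3$ gives the stated $\zeta$.

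The main obstacle I expect is twofold. First, \emph{guessing and verifying the exact form of the invariant approximations} $\hat\pv,\hat\sv$: one must retain precisely those lower-order terms (linear in the degree deviations $d_u-d$, and the $\mu$- and $\alpha$-corrections) that survive one pass of $\Cc$ up to $O(\zeta)$, and the algebra in checking $\Cc(\hat\pv,\hat\sv)\approx(\hat\pv,\hat\sv)$ — particularly inside the telescoping product $\rv_{B,A}$ in~\eqref{eq:probop} and the $1/(1-\pv)$ factors — is where all the cancellations that yield the clean denominator $d_0(1-\mu_0)(n-1-q)$ must be made to appear. Second, \emph{tracking the error terms uniformly as $q=k-1$ grows}: every sum over $(k-1)$- or $(k-2)$-subsets carries combinatorial factors, and the products $\rv_{B,A}$ have $q$ factors, so the naive bound on an error of size $\xi$ per factor becomes $q\xi$; keeping all such losses inside the claimed $\zeta = q^2(\mu_0+\alpha)\eps^2$ for $q\ge 3$ (and the sharper $q=2$ bound) requires exploiting the smallness hypotheses $q(\mu_0+\alpha)<c$, $\eps q<c^{1/3}$, $\eps^2 d_0>1$ at exactly the right places, and this is the step most likely to need the detailed casework and the slightly weaker hypotheses the authors allude to when deducing Proposition~\ref{Pdense}.
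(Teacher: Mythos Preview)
Your overall architecture matches the paper's proof closely: set up a nested family $\Omega(s)$ around $\dv_0$, check $k$-graphicality via Proposition~\ref{prop:kgraphical}(i) and the crude bounds $P\le O(\mu)$, $Y\le O(\mu^2)$ via \thref{l:simpleSwitching}, observe that $(P,Y)$ is an exact fixed point of $\Cc$ by \thref{t:recrel}, exhibit an explicit approximate fixed point, and iterate \thref{c:contraction} $O(\log(nd_0))$ times to pull $(P,Y)$ onto it. That is exactly the scheme the paper follows.

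However, two specific steps in your plan would fail as written. First, your final substitution into~\eqref{eq:Rfixed} is wrong: the $Y$-sum over $(k-1)$-sets does \emph{not} fold into the error. It is of order $\mu$, and the numerator of~\eqref{eq:ratop} is $d_a - d\,\bade - d\,\badp \approx d(1+\eps_a - \alpha - \mu)$; the $-\mu$ coming from $\badp(\Sf)$ is precisely what produces both the $\binom{n-1}{q}-d_b$ factor and the $(1-\mu_0)$ in the correction denominator. If you drop it you obtain $\frac{1+\eps_a-\alpha}{1+\eps_b-\alpha}$ and cannot recover~\eqref{eq:ratiostatement}. In the paper both $\bade(\Pf)$ and $\badp(\Sf)$ are computed to leading order (see the displays around~\eqref{eq:ratiocomp}) and combined before forming the ratio.

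Second, your single linear ansatz for $(\hat\pv,\hat\sv)$ is only adequate when $q\ge 3$. The paper's approximate-invariance claim gives defect $\vartheta$ (not $\zeta$), where $\vartheta=q^2\eps^2$ for $q\ge3$ but $\vartheta=\eps^3+\alpha\eps^2+\alpha^2\eps+\alpha^3$ for $q=2$; the final $\zeta$ arises only at the last step, when one extra application of $\Rc$ converts a $(1\pm\vartheta)$ error on $(P,Y)$ into $(1\pm O((\mu+\alpha)\vartheta))$ on $R$ via \thref{l:errorImplication}(a). To achieve $\vartheta=O(\eps^3+\cdots)$ when $q=2$, the paper must carry \emph{second-order} terms in the $\eps_v$'s in both $\Pft$ and $\Sft$ (see~\eqref{eq:rf3}); your first-order $\hat\pv$ would give only $\vartheta=O(\eps^2)$ and hence a final error $(\mu+\alpha)\eps^2$, which is coarser than the stated $\zeta$ for $q=2$. (Incidentally, your displayed $\hat\pv$ also has a spurious factor of $q$: the paper's $\Pf_K=\mu(1+\eps_K/(1-\alpha))$ has coefficient $\tfrac{1}{d(1-\alpha)}=\tfrac{n-1}{d(n-1-q)}$ in front of $\sum(d_u-d)$, with no $q$.)
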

        	Note that the hypotheses imply $\zeta <c$ for large $n$. Before proving the  lemma, we state some simple identities used therein to aid readability.
        	
        	        \begin{claim} \thlab{lemma:sum}
                    Let $q, n \in \Z$, $\eps \in \R$, $\alpha = \frac{q}{n-1}$ and $a, b \in V$. For $v \in V$, let $\eps_v \in [-\eps, \eps]$ and assume that  $\sum_{v \in V}\eps_v=0$. Given $K\subset V$, let $\eps_K = \sum_{v \in K}\eps_v$. For an arbitrary coefficient $c$,
                    \begin{align*}
                        \binom{n-1}{q}^{-1} \sum_{\uh \in \binom{V \setminus \{a \}}{q}}  \left(c+ \eps_{\uh} \right) & = c  -\alpha \eps_a,  \\
                       \binom{n-1}{q}^{-1} \sum_{\uh \in  \binom{V \setminus \{a, b \}}{q}}  \left(c+ \eps_{\uh} \right) & =(1-\alpha) c -(\eps_a + \eps_b)\cdot \frac{\alpha(1-\alpha)}{1-\ww}, \\
                         \binom{n-1}{q}^{-1} \sum_{\uh \in  \binom{V \setminus \{a, b \}}{q-1 }}  \left(c+ \eps_{\uh} \right) & =\alpha c - (\eps_a + \eps_b) \cdot \frac{\alpha(\alpha-\ww)}{1- \ww}.
                    \end{align*}
                    Moreover, for $c_{00}, c_{01}, c_{02}, c_{11} \in \R$,
                    \begin{align*}
                       & \binom{n-1}{2}^{-1} \sum_{ \{ u,v\} \in  \binom{V \setminus \{a, b \}}{2}} c_{00} + c_{01} ( \eps_u + \eps_v) + c_{02} (\eps_u^2 + \eps_v^2 ) + c_{11} \eps_u \eps_v \\ 
                    = &(1-\alpha)c_{00} - \alpha c_{01}(\eps_a + \eps_b)+ \alpha{\sum_{v \in V}} \eps_v^2 + (c_{01} + c_{02} +  c_{11}) O(\eps \alpha^2 +\eps^2 \alpha).\qedhere
                    \end{align*} %
        \end{claim}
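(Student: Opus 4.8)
```latex
\textbf{Proof plan for Claim~\ref{lemma:sum}.}
The plan is to treat each identity as a purely combinatorial expansion, exploiting the hypothesis $\sum_{v\in V}\eps_v=0$ to collapse the linear sums into simple multiples of $\eps_a$ and $\eps_b$ (plus, in the last identity, the quadratic moment $\sum_v\eps_v^2$). The governing principle throughout is that for a fixed $t$-set being summed, a vertex $w\notin\{a,b\}$ appears in $\binom{n-3}{t-1}$ of the sets counted, so $\sum_{K}\eps_K = \binom{n-3}{t-1}\sum_{w\notin\{a,b\}}\eps_w = -\binom{n-3}{t-1}(\eps_a+\eps_b)$ by the zero-sum hypothesis; and the number of such $t$-sets is $\binom{n-2}{t}$. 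Each displayed identity is then just the ratio of these binomial coefficients to $\binom{n-1}{q}$ (or $\binom{n-1}{2}$), rewritten using $\alpha=q/(n-1)$ and $\ww=(n-1)^{-1}$.

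For the first identity, I would write $\binom{n-1}{q}^{-1}\sum_{K\in\binom{V\setminus\{a\}}{q}}(c+\eps_K)$. The constant part gives $c\binom{n-2}{q}/\binom{n-1}{q} = c(1-\alpha)$; wait — more carefully, here $a$ is the \emph{only} excluded vertex, so one sums over all $q$-subsets of $V\setminus\{a\}$, of which there are $\binom{n-1}{q}$, giving constant part exactly $c$. For the linear part, each $w\neq a$ lies in $\binom{n-2}{q-1}$ of these sets, so $\sum_K\eps_K=\binom{n-2}{q-1}\sum_{w\neq a}\eps_w = -\binom{n-2}{q-1}\eps_a$, and $\binom{n-2}{q-1}/\binom{n-1}{q} = q/(n-1)=\alpha$, yielding $-\alpha\eps_a$. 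For the second and third identities, now both $a$ and $b$ are excluded: the number of $q$-subsets (resp.\ $(q-1)$-subsets) of $V\setminus\{a,b\}$ is $\binom{n-2}{q}$ (resp.\ $\binom{n-2}{q-1}$), and $\binom{n-2}{q}/\binom{n-1}{q}=1-\alpha$ while $\binom{n-2}{q-1}/\binom{n-1}{q}=\alpha$; for the linear parts each $w\notin\{a,b\}$ lies in $\binom{n-3}{q-1}$ (resp.\ $\binom{n-3}{q-2}$) of them, and $\sum_{w\notin\{a,b\}}\eps_w=-(\eps_a+\eps_b)$. Computing $\binom{n-3}{q-1}/\binom{n-1}{q}$ and $\binom{n-3}{q-2}/\binom{n-1}{q}$ and simplifying with $\alpha=q/(n-1)$, $\ww=1/(n-1)$ gives the stated coefficients $\alpha(1-\alpha)/(1-\ww)$ and $\alpha(\alpha-\ww)/(1-\ww)$ respectively; these are exact algebraic identities, not approximations.

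For the final (quadratic) identity with $q$ effectively $2$, I would split the sum over $\{u,v\}\in\binom{V\setminus\{a,b\}}{2}$ into the four coefficient-contributions. The $c_{00}$ term contributes $(1-\alpha)c_{00}$ as above. The $c_{01}$ term contributes $-\alpha c_{01}(\eps_a+\eps_b)$ by the same zero-sum reduction (with the appropriate ratio of binomials equal to $\alpha$ when $q=2$). The $c_{02}$ term is $c_{02}\sum_{\{u,v\}}(\eps_u^2+\eps_v^2) = c_{02}(n-3)\sum_{w\notin\{a,b\}}\eps_w^2$; since $\binom{n-1}{2}^{-1}(n-3) = \alpha(1+O(\ww))$ and $\sum_{w\notin\{a,b\}}\eps_w^2 = \sum_{v\in V}\eps_v^2 - \eps_a^2-\eps_b^2 = \sum_v\eps_v^2 + O(\eps^2)$, this yields $\alpha\sum_{v\in V}\eps_v^2$ up to an error $c_{02}\,O(\eps^2\alpha + \eps\alpha^2)$ after also absorbing the $O(\ww)$ correction. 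The $c_{11}$ term is $c_{11}\sum_{\{u,v\}}\eps_u\eps_v = \tfrac12 c_{11}\big((\sum_{w\notin\{a,b\}}\eps_w)^2 - \sum_{w\notin\{a,b\}}\eps_w^2\big)\cdot\binom{n-1}{2}^{-1}$; using $\sum_{w\notin\{a,b\}}\eps_w = -(\eps_a+\eps_b)$ and $\binom{n-1}{2}^{-1}=O(\alpha^2/q^2)=O(n^{-2})$, both pieces are $O(\eps^2\alpha^2)$, hence absorbed into the error term. Collecting the four contributions gives exactly the claimed formula.

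The routine part is the binomial-coefficient bookkeeping; the only place needing a little care — and hence the main (mild) obstacle — is tracking which lower-order corrections in the quadratic identity genuinely land in the advertised error $O(\eps\alpha^2+\eps^2\alpha)$ versus which must be kept exactly, in particular verifying that the $\binom{n-1}{2}^{-1}(n-3)$ versus $\alpha$ discrepancy and the $\eps_a^2,\eps_b^2$ truncation are both of that order. Everything else is a direct computation, and since the statement is claimed uniformly, one simply observes that all error bounds depend only on $\eps,\alpha,n$ and not on the particular sequence $(\eps_v)$.
```
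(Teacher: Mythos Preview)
Your approach is exactly the one the paper indicates (the paper omits the proof, remarking only that each vertex $v\neq a$ appears in $\binom{n-2}{q-1}$ of the summands and that $\sum_v\eps_v=0$ does the rest), and your binomial-ratio computations for the first three identities are correct and match the stated coefficients precisely. One small slip in the quadratic identity: the piece $\tfrac12 c_{11}\binom{n-1}{2}^{-1}\sum_{w\notin\{a,b\}}\eps_w^2$ is $O(\eps^2\alpha)$, not $O(\eps^2\alpha^2)$, since $\sum_w\eps_w^2$ can be as large as $n\eps^2$; this is still within the advertised error $O(\eps\alpha^2+\eps^2\alpha)$, so the argument goes through unchanged.
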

    	 The proof is omitted because the identities follow from simple algebraic manipulations --- for instance, for the first equation, note that each vertex $v\neq a$ appears in $\binom{n-2}{q-1}$ of the summands and use the hypothesis that $\displaystyle \sum_{v\in V} \eps_v = 0$.    	
    
        	\begin{proof}[Proof of Lemma~\ref{l:ratiosDense}]

        	    We will approximate the functions $P$, $R$ and $\sp$ defined in Section~\ref{sec:recursions}. 
        	   For a vertex $v$ and sequence $\dv \in \Z^n$ with average $d$, we define $ \mu = \mu(\dv) = d \binom{n-1}{q}^{-1}$ and  $\eps_v = (d_v-d)/d$. Note that $$\sum_{v \in V} \eps_v =0 $$ by definition. As before, let $\eps_K = \sum_{v \in K} \eps_v$ for $K \subset V$.  For $q \geq 3$, $a, b \in V$, $\uh \in \binom{V}{q+1}$ and $\uj \in \binom{V\setminus \{a, b \}}{q}$, set
                        \begin{align}   
                        \Pf_{\uh}(\dv) %
                        &=\mu \left(1+  \frac{ \eps_{\uh}}{1-\alpha} \right), \non
                        \\
                        \Rf_{ab} (\dv) 
                        &=\frac{(1-\mu)(1-\alpha)+ \eps_a}{(1-\mu)(1-\alpha)+\eps_b}, \label{eq:rf}
                        \\
                        \Sf_{a,\uj,b} (\dv)&=  \mu^2\left(1 + \frac{\eps_a + \eps_b + 2\eps_\uj}{1- \alpha} \right) . \non
                        \end{align}
                        
        	    These will turn out to approximate $P, R, \sp$ sufficiently well when $q \geq 3$. For $q=2$ (corresponding to 3-uniform hypergraphs), we need more precise approximation formulae. As usual, the dependence on $q$ is suppressed in the notation. For distinct vertices $a, b, v, u$ in $V$, define
                    \begin{align}
                        \Pft_{auv}(\dv)&= \mu 
                    \left(1 + \frac{\eps_a}{1-\alpha} \right) \left(1 + \frac{\eps_u}{1-\alpha} \right) \left(1 +   \frac{\eps_v}{1-\alpha} \right)
                    \left(1 - \frac{\mu(\eps_a \eps_u + \eps_u \eps_v + \eps_v \eps_a)}{1- \mu} \right),
                        \non \\
                        \Rft_{ab} (\dv) &= \frac{(1-\mu)(1-\alpha) +\eps_a -\mu(\eps_a+\eps_b)}{(1-\mu)(1-\alpha) +\eps_b -\mu(\eps_a+\eps_b)}  , \label{eq:rf3} \\
                         \Sft_{auvb} (\dv) &= 
                         \mu^2\Big(1-2(1-\mu){ d}^{-1}+(1+\alpha)(\eps_a + \eps_b + 2\eps_u + 2\eps_v) + \eps_a\eps_b + \eps_u^2 + \eps_v^2   \Big)  
                             \non
                             \\
                         &\qquad + \frac{\mu^2(2-3\mu)}{1-3\mu}\Big(\eps_a \eps_u + \eps_a \eps_v + \eps_b \eps_u + \eps_b \eps_v +2\eps_u \eps_v + \eps_u^2 + \eps_v^2 \Big) .
                         \non
                    \end{align}
        	     We next informally outline the argument we give below that the functions approximate $P, R$ and $\sp$. Given $q$, $\eps$ and $d_0$ as in the statement, let ${\D_0}$ be the set of sequences with average $d_0$ and spread at most $\eps d_0$.  We start by showing that $\cC^{t_0}(\Pf, \Sf)$ is close to $(\Pf, \Sf)$ in a neighbourhood of ${\D_0}$, where $t_0 = 2 \log (nd_0)$. Secondly, from the recursive relations in Section~\ref{sec:recursions}, we deduce that $(P, \sp)$, restricted to ${\D_0}$, is a fixed point of $\cC$.
        	   Finally, we use the contraction  property of $\cC$ as expressed in \thref{c:contraction}
                to show that $\cC^{t_0}(\Pf, \Sf)$ and $\cC^{t_0}(P, \sp)$ are at distance roughly $e^{-t_0}$ in the relevant metric. \thref{l:ratiosDense} then follows from these three statements and the triangle inequality. The computations below split into two cases depending on $q$ since we need more precise estimates for $\Pft, \Sft$ and $\cC(\Pft, \Sft)$ when $q=2$.
        
                 Let $\Omega{(0)}$ be the set of sequences $\dv  \in \Z^n$ that are at $\Lh$-distance at most $3t_0 +3 = 6 \log (nd_0) + 3$   from a sequence in ${\D_0}$. As in~\thref{c:contraction}, for $s \geq 1$, let $ \Omega{(s)}$ denote the set of all $\dv\in \Omega{(0)} $ with $\sum_{v \in V} d_v \equiv 0 \pmod{q+1}$  for which  $Q_s^0(\dv) \se \Omega{(0)}$.  In particular, $\Omega(3t_0 +3)$ contains ${\D_0}$.%
                
                We remark that any $\dv \in \Omega(0)$ still  has spread at most $2\eps d_0$ for $n$ large enough. Namely, denoting the average of $\dv$ by $d \sim d_0$ and using the hypotheses $d_0> \log^3 n$ and $\eps^2 d_0>1$, we have $\log n / d_0 < d_0^{-2/3} < \eps^{4/3}$ and thus 
                    $$\left | \frac{d_v}{d} - 1 \right | \leq \eps + O\left(\frac{\log(nd_0)}{d_0} \right) = (1 + o(1)) \eps$$ for all $v \in V$.
                    
                    The following lemma states that the functions $\Pf, \Rf, \Sf$ on $\Omega(0)$ are \emph{approximate invariants} of the operators. No divisibility or graphicality assumptions are required since this is an analytic statement. 
                \begin{claim}   \thlab{claim:invariant}
                   Under the assumptions of the present lemma, for $q \geq 2$ and $\dv \in \Omega(0)$, we have {uniformly}
                    \begin{enumerate}[(a)]
                        \item $\Rc (\Pf, \Sf) {\proj} (\dv) = \Rf {\proj} (\dv) \rel{\zeta} $,
                        \item 
                    $\Pc (\Pf, \Rf) {\proj} (\dv) = \Pf {\proj} (\dv) (1 + O(\vartheta))$ and
                    \item
                    $ \Sc(\Pf, \Sf) {\proj} (\dv) = \Sf  {\proj}(\dv)\rel{\vartheta}$,
                    \end{enumerate} 
                    where  
    				$$ \vartheta = \begin{cases}
                     \eps^3 + \alpha \eps^2 + \alpha^2 \eps + \alpha^3, &q=2\\
                      q^2\eps^2, & q \geq 3.  
                     \end{cases}$$
                    Consequently, $\cC(\Pf, \Sf) \proj (\dv) = (\Pf, \Sf)\proj (\dv) \rel{\vartheta}$ {uniformly} for all $\dv \in \Omega(3)$.
                \end{claim}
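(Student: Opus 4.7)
The plan is to verify (a), (b), (c) directly by substituting the explicit approximations $\Pf$, $\Rf$, $\Sf$ (or $\Pft$, $\Rft$, $\Sft$ when $q=2$) into the operator definitions \eqref{eq:probop}, \eqref{eq:pathop}, \eqref{eq:badop}, \eqref{eq:ratop}, and then reducing the resulting sums over subsets of $V$ using the arithmetic identities from Claim~\ref{lemma:sum}. The identity $\sum_{v\in V}\eps_v=0$ is the engine that collapses most of the first-order terms; the remaining tail terms then match the claimed $\Rf,\Pf,\Sf$ up to the stated error $\vartheta$ or $\zeta$. Since we only need the statement at $\dv\in\Omega(0)$, we may freely use the bound $|\eps_v|\le 2\eps$ derived in the paragraph preceding the claim, together with $q(\mu+\alpha)<c$ and $\eps q<c^{1/3}$, to absorb higher-order terms.

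For part~(a) I would first compute $\bade(\Pf)(a,b,\dv-\eb)$ and $\badp(\Sf)(a,b,\dv-\eb)$. Inserting the explicit form of $\Pf_{a+H+b}$ and $\Sf_{a,H,b}$ and applying the first and third identities of Claim~\ref{lemma:sum} reduces each to a closed expression linear in $\eps_a,\eps_b$. Summing and substituting into~\eqref{eq:ratop} yields a ratio whose numerator, after a bit of algebra, equals $d_a(1-\mu)(1-\alpha)+d_a\eps_a +$ error, and similarly for the denominator, producing precisely $\Rf_{ab}(\dv)$ in the form~\eqref{eq:rf} up to the required relative error $O(\zeta)$. For $q=2$, the same calculation must be carried out using $\Pft$ and $\Sft$ and the fourth identity of Claim~\ref{lemma:sum}, so that the quadratic $\eps_v^2,\eps_u\eps_v$ terms are tracked; this yields $\Rft_{ab}$ in the form~\eqref{eq:rf3} with the sharper error $\vartheta=\eps^3+\alpha\eps^2+\alpha^2\eps+\alpha^3$.

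Part~(c) is the most direct: evaluate $\Sc(\Pf,\Sf)(a,H,b,\dv)$ from \eqref{eq:pathop} by expanding $\Pf_{a,H}(\dv)/(1-\Pf_{a,H}(\dv-\ve_{a,H}))$ as $\mu(1+\eps_{a+H}/(1-\alpha))(1+O(\mu))$, and $\Pf_{b,H}(\dv-\ve_{a,H})-\Sf_{a,H,b}(\dv-\ve_{a,H})$ as $\mu(1+\eps_{b+H}/(1-\alpha))(1-\mu+O(\mu\eps))$; multiplying out and comparing with $\Sf_{a,H,b}$ gives the claim with error $O(q^2\eps^2)$. For $q=2$ the more elaborate form of $\Sft$ has been specifically engineered so that the $\eps_v^2$, $\eps_u\eps_v$ and $d^{-1}$ corrections cancel against the corresponding terms produced by $\cC$; verifying this cancellation is a bookkeeping step but conceptually the same. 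Part~(b) is the hardest: one must evaluate $\Pc(\Pf,\Rf)(A,v,\dv)$ in \eqref{eq:probop}, whose inner sum is over $B\in\binom{V\setminus\{v\}}{k-1}$ of the \emph{telescoping} product $\Rf_{B,A}(\dv-\ev)$ times a ratio of $1-\Pf$ factors. The main obstacle here is that the telescope contains $k-1$ factors, each depending on two ordered vertex lists, so one must show that the product simplifies to $\prod_{j}((1-\mu)(1-\alpha)+\eps_{b_j})/((1-\mu)(1-\alpha)+\eps_{a_j})$ up to smaller error, and then apply the first identity of Claim~\ref{lemma:sum} to the resulting sum. The factors depending on vertices in $A\cap B$ cancel, leaving a sum of the required form.

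Finally, the composition statement $\cC(\Pf,\Sf)\proj(\dv)=(\Pf,\Sf)\proj(\dv)(1+O(\vartheta))$ on $\Omega(3)$ follows by combining (a)--(c): feeding (a) into~\eqref{eq:probop} via $\Rc$ gives $\Pc(\Pf,\Rc(\Pf,\Sf))\proj(\dv)=\Pc(\Pf,\Rf)\proj(\dv)(1+O(\zeta))$ by Lemma~\ref{l:errorImplication}(b), which by (b) equals $\Pf\proj(\dv)(1+O(\vartheta))$; feeding this into $\Sc$ and invoking Lemma~\ref{l:errorImplication}(c) together with (c) delivers the second coordinate. The loss in the neighbourhood parameter from $\Omega(0)$ down to $\Omega(3)$ accounts for the three successive applications of operators, each of which requires its argument to be evaluated on sequences at $\Lh$-distance up to $1$ from the point of interest. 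The anticipated main obstacle is the telescope in (b): ensuring that the inevitable cross-terms of the form $\mu\eps_{a_i}\eps_{a_j}$ contribute only to $O(\vartheta)$ and that, for $q=2$, the sharper formula $\Pft$ is reproduced exactly up to $O(\eps^3+\alpha\eps^2+\alpha^2\eps+\alpha^3)$.
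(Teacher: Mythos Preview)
Your approach matches the paper's proof: substitute $\Pf$, $\Rf$, $\Sf$ into the operator definitions, collapse the sums via Claim~\ref{lemma:sum}, and then invoke Lemma~\ref{l:errorImplication} for the composition statement. Two technical points that your sketch glosses over, however, carry real weight in the paper's argument and would need to be made explicit.

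First, you do not say how to handle the fact that the operators evaluate their arguments at \emph{perturbed} sequences (e.g.\ $\dv-\eb$, $\dv-\ve_{A+v}$, and the intermediate sequences inside the telescope $\Rf_{B,A}(\dv-\ev)$). The paper first establishes perturbation estimates (see~\eqn{eq:changedvmu}, \eqn{eq:changedv}, \eqn{eq:changedv_rat}, and for $q=2$ also~\eqn{eq:changedv_prob}, \eqn{eq:changedv_path}) showing that passing from $\dv$ to $\dv-\ve_J$ shifts $\mu$ and each $\eps_u$ by $O(1/d_0)$ or $O(\alpha/d_0)$; together with the hypothesis $\eps^2 d_0>1$ this lets all such shifts be absorbed into $O(\vartheta)$. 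Without these, the telescope in~(b) cannot be reduced to the clean product you wrote, and in~(c) the shifted arguments $\dv-\ve_{a+H}$ produce $1/d$ corrections that, for $q=2$, must be tracked rather than discarded.

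Second, in~(a) your ``after a bit of algebra'' hides the key step. The computation of $\bade+\badp$ produces, beyond the main term $\mu+\alpha-\mu\alpha$, a correction $(\eps_a+\eps_b)\beta$ with $\beta=O(\mu+\alpha)$ that is \emph{symmetric} in $a,b$. Naively this would contribute an $O((\mu+\alpha)\eps)$ relative error to $\Rc$, which is too large. The paper observes (see~\eqn{eq:ratiocomp}) that because the same symmetric term appears in both numerator and denominator, it cancels to order $O(\eps^2\beta)$, and this is precisely what delivers $\zeta$. The same device is reused for $q=2$, where the symmetry of $\Sft_{auvb}$ in $a,b$ lets one avoid computing the $O(\eps^2)$ contributions to $\badp$ explicitly.
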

                    \begin{proof}[Proof of \thref{claim:invariant}]
                      We start by establishing how small changes in the sequence $\dv$ affect the function $\Pf\proj(\dv) $. Since the average  {of the individual components} of any sequence in $\Omega(0)$ is asymptotically equal to $d_0$, we will be systematically replacing the error term $O\left(d^{-1} \right)$ by $O \left(d_0^{-1}\right)$.  {Moreover, the assumptions of the lemma imply that when $c$ is sufficiently small, we have $\vartheta < c^{2/3}$  and, as noted above, $\zeta < c$.}
                      Consider a sequence $\dv \in \Omega(0)$ and $\dv' = \dv -  \ve_{\uj}$, where $\uj \subset V$ with $|\uj| = q+1$. Let $d$ and $d' $ be the average of $\dv$ and $\dv'$, respectively. For any vertex $u$, let $\mu'$ and $\eps_u '$ be the parameters corresponding to $\mu $ and $\eps_u$ with respect to the sequence $\dv'$. This will be the convention throughout the proof. We have $d' = d - \frac{q+1}{n} = d \rel{\alpha/d_0}$ and hence                   \bee \label{eq:changedvmu}
                        \mu' = \mu \rel{\frac{\alpha}{d_0}}
                      \text{\quad and \quad} \eps'_{u} = \eps_u + O \left( \frac{\alpha}{d_0}\right) \text{ for }  u \notin J
                      \ee
                       Moreover, for any $u \in \uj$,
                        \bel{eq:changedveps}
                            \eps_u' =\frac{d_u -1}{d\relf{q}{d_0n}} -1 
                            = \left(\eps_u - \frac{1}{d} \right) + O\left(\frac{q}{d_0n} \right) = \eps_u + O\left(\frac{1}{d_0} \right). %
                            \non
                        \ee
                         Now we split the calculation into two cases.
                        \paragraph{Case 1:~$q\geq 3$.}
                        Recall the definitions of the functions $\Pf, \Rf, \Sf$  in~\eqref{eq:rf}, and that we write $\eps_\uh = \sum_{u \in \uh} \eps_u$.  It follows that, for $\dv' = \dv - \ve_{J}$,
                        \begin{align}
                            \label{eq:changedv}
                            \Pf_{\uj}(\dv') = \mu' \left(1 + \frac{\eps_{\uj}'}{1- \alpha} \right) 
                        &=\mu   \left( 1 + \frac{1}{1-\alpha} \left( \eps_{\uj}-\frac{q+1}{d} \right) + O \left(\frac{q^2}{d_0n} \right) \right) \\
                        &= \mu \left(1+ \frac{\eps_{\uj}}{1-\alpha} \right)  \relf{q}{d_0}. \non 
                        \end{align}
                    Thus  $\Pf_{\uj}(\dv') = \Pf_{\uj}(\dv)\relf{q}{d_0}$, which absorbs fluctuations due to perturbations of the degree sequence into the error term. Even though $d_0 \eps^2 > 1$ is assumed in the lemma, we often keep error terms involving $1/d_0$ to facilitate identifying the source of the error, namely,  from perturbing \emph{few} entries of the degree sequence $\dv$. On the other hand, $O(\eps^2)$ terms will arise from Taylor expansions.
                        
                        To show (a), observe that $\Rf_{aa}(\dv) =1 $ by definition~\eqref{eq:rf}.       
               Let $a, b \in V$ with $a \neq b$ and $\dw = \dv - \eb$. We compute $\badp$ and $\bade$ to the accuracy which is required for the ratio approximation. By definition of $\badp$ in \eqref{eq:badop}, definition of $\Sf$ in~\eqref{eq:rf} and using~\eqref{eq:changedvmu}, 
                            \begin{align*} \non
                                \badp (\Sf; a, b, \dw) &=
                                \frac {1}{\mu' \binom {n-1}{q} } \sum_{\uh \in \binom{V \setminus \{a, b \}}{q}} \Sf_{a, \uh, b } (\dw) \\
                                &= \frac {1}{\mu \binom {n-1}{q} } \sum_{\uh \in \binom{V \setminus \{a, b \}}{q}} 
                                 \mu^2\left( 1 + \frac{\eps_a + \eps_b + 2\eps_\uh}{1- \alpha} \right)\relf{1}{d_0},
                            \end{align*}
                        where the error term is due to $\eps_b' = \eps_b + O(1/d_0)$ and $
                        \eps_K' = \eps_K (1 + O(q/(dn)))$. To estimate the sum, we apply Claim~\ref{lemma:sum}. It follows that
                            \begin{align*}
                            	               \badp (\Sf; a, b, \dw)  &= \mu\left((1 - \alpha + \eps_a + \eps_b) - 2(\eps_a +\eps_b)\beta_1(\alpha, n) \right) \relf{1}{d_0} 
                            \end{align*}
                with $\beta_1(\alpha, n) \leq 2 \alpha$. Now we estimate $\bade$. Note that now $\uh$ is a $(q-1)$-tuple. Similar to the above, for a function $\beta_2(\alpha, n) \leq 2 \alpha $, we have
                \begin{align*}
                    \bade (\Pf ; a, b, \dw)   &= \frac {1}{\mu' \binom {n-1}{q} } \sum_{\uh \in \binom{V \setminus \{a, b \}}{q-1}} \Pf_{a+\uh, b}(\dw)  \\
                    &=  \frac {1}{\binom {n-1}{q} } \sum_{\uh \in \binom{ V\setminus \{a, b \}}{q-1}} \left(1 + \frac{\eps_a + \eps_b}{1-\alpha} + \frac{\eps_{\uh}}{1-\alpha} \right) \relf{1}{d_0}\\
                    &=\alpha \left(1 + \frac{\eps_a + \eps_b}{1-\alpha}- (\eps_a + \eps_b)\beta_2(\alpha, n) \right) \relf{1}{d_0}
                \end{align*}
                using Claim~\ref{lemma:sum}. It follows that there is a function ${ \beta } = \beta (\alpha, n) = O(\mu + \alpha)$ independent of $a$ and $b$ such that
                \bee 
                    \bade(\Pf; a, b, \dv - \eb) + \badp(\Sf; a, b, \dv - \eb)  = (\mu + \alpha - \mu \alpha + (\eps_a + \eps _b)\beta) \rel{d_0^{-1}},
                \ee
                and the same estimate holds for $\bade(\Pf; b, a, \dv - \ea) + \badp(\Sf; b, a, \dv - \ea)$ in the denominator in the definition of $\Rc$~\eqref{eq:ratop}. Therefore,
                $$\Rc(\Pf, \Sf)_{ab}(\dv) = \frac{1+\eps_a - \mu-\alpha  + \mu \alpha- (\eps_a+\eps_b)\beta}{1+\eps_b -\mu-\alpha  + \mu \alpha- (\eps_a+\eps_b)\beta } \relf{\mu + \alpha}{d_0}.$$
                
                At this point, we have a non-trivial cancellation --- discarding the term $(\eps_a + \eps_b)\beta$ only incurs a relative error of $O(\eps(\eps_a + \eps_b)\beta)$. Since this fact will be used again, we include the computation. We have
                    \bee \label{eq:ratiocomp}
                        \frac{(1-\mu)(1-\alpha) +\eps_a + (\eps_a + \eps_b)\beta}{(1-\mu)(1-\alpha) + \eps_b + (\eps_a + \eps_b)\beta} \cdot \frac{ (1-\mu)(1-\alpha)+\eps_b}{ (1-\mu)(1-\alpha)+\eps_a} = 
                        1+ \frac{(\eps_a-\eps_b)(\eps_a + \eps_b)\beta}{(1+O(\mu + \alpha))} = 1 + O(\eps^2 \beta).
                    \ee
                Recalling that $\eps^2 > d_0^{-1}$ and $\zeta = q^2(\alpha + \mu_0)\eps^2$, we conclude that
                    $$\Rc (\Pf, \Sf)_{ab} (\dv) = \frac{(1-\mu)(1-\alpha)+ \eps_a}{(1-\mu)(1-\alpha)+\eps_b}\rel{\eps^2 \beta + (\mu_0 + \alpha)\cdot \frac{1}{d_0} } = \Rf_{ab}(\dv)\rel{\zeta},$$
                    completing the first part of the claim.
                    
                    To show (b), we turn to estimating $\Pc(\Pf, \Rf)_{A, v} (\dv)$ according to the definition in~\eqref{eq:probop}. Let $\nu = (1-\mu)^{-1}(1-\alpha)^{-1}$ to avoid too many double fractions. Let $\uh = \ua +  v$ with $\ua= \{a_1, a_2, \dots ,  a_q \}$ indexed in increasing order. Fix a set $\ub$ and its $\ua$-consistent ordering $( b_1, b_2, \dots, b_q )$. We start by estimating $\Rfs_{\ub,\ua}(\dv - \ev)$. For $\dw =\dv  - \ve_{b_1b_2\cdots b_{j-1}a_{j+1}\cdots a_q v}$, we have  $\mu' = \mu \relf{q}{d_0n}$ and $\eps_{b_j}' = \eps_{b_j} + O \left(\frac{q}{d_0 n} \right)$ (since $d_{b_j} = d_{b_j}'$), so
                    \bee
                        \Rf_{b_j a_j} (\dv  - \ve_{b_1b_2\cdots b_{j-1}a_{j+1}\cdots a_q v})=
                        \frac{(1-\mu)(1-\alpha)+ {\eps_{b_j}  + O\left( \frac{q}{d_0n} \right) } } { (1-\mu)(1-\alpha) + {\eps_{a_j} + O\left( \frac{q}{d_0 n} \right)  } } = 
                        \frac{1 + \nu \eps_{b_j} }{1 + \nu \eps_{a_j} }\relf{ q}{d_0n}.
                        \label{eq:changedv_rat}
                    \ee
                    Therefore, using~\eqref{eq:changedv} and linearising in $\eps$-terms, we get 
                    \begin{align*}
                        &\frac{1- \Pf_{\ub,v }(\dv - \ve_{B, v})}{1- \Pf_{\ua,v }(\dv - \ve_{A, v})} \cdot \Rfs_{\ub, \ua}(\dv - \ev) \\ 
                        &= \frac{1- \mu - \mu(1-\mu)\nu(\eps_{\ub}+ \eps_{v})}{1- \mu - \mu(1-\mu)\nu(\eps_{\ua}+ \eps_{v})} \cdot \relf{ \mu_0 q}{d_0} \prod_{j=1}^q  \frac{1+\nu \eps_{b_j} }{1 + \nu \eps_{a_j}} \relf{  q}{d_0n}   \\
                        &=\frac{1-\mu\nu \eps_{\ub}} {1-\mu\nu\eps_{\ua}} \cdot \frac{1+ \nu \eps_{\ub}}{ 1 + \nu \eps_{\ua}} \rel{  q^2 \eps^2 } \\
                        &= \frac{1 + (1-\mu)\nu \eps_{\ub}}{1+(1-\mu)\nu \eps_{\ua}}\rel{  q^2 \eps^2},
                        \end{align*}
                       where the error term $O(q^2 \eps^2)$ comes from linearising the product, and recalling that $\eps^2 >d_0^{-1}$.
                    Summing over $\ub$ as before, we get
                    $$\binom{n-1}{q}^{-1} \sum_{\ub \in \binom{V\setminus \{v \}}{q}} \frac{1- \Pf_{\ub}(\dv - \ve_{B,v})}{1- \Pf_{\ua}(\dv - \ve_{A,v})} \cdot \Rfs_{\ub, \ua} (\dv - \ev) 
                    = \frac{1+ O(q^2 \eps^2)}{1+(1-\mu)\nu \eps_{\ua}}\cdot \left(1 - \alpha (1-\mu)\nu \eps_v \right).$$
                    Finally,
                    $$\Pc(\Pf, \Rf)_{\ua, v}(\dv) = \mu(1+\eps_v)\cdot  \frac{1+(1-\mu)\nu \eps_{\ua}}{1-\alpha(1-\mu)\nu \eps_v} \rel{q^2 \eps^2}.$$
                    The coefficient of $\eps_v$ in the Taylor expansion is $1+ \alpha (1- \mu) \nu = 1 + \frac{\alpha}{1-\alpha} = \frac{1}{1-\alpha}$. Therefore
                    $\Pc (\Pf, \Rf)_{\ua, v}(\dv) = \mu\left(1 + \frac{\eps_{\ua} + \eps_v}{1- \alpha} \right)\rel{q^2 \eps^2},$
                    which establishes (b).
                    
                	To estimate $\Sc(\Pf, \Sf)$, we use~\eqref{eq:pathop} and, once again,~\eqref{eq:changedv} to obtain
        			\begin{align*}
        					\Sc(\Pf, \Sf)_{a,\uh,b} (\dv) &=\mu^2 \cdot \frac{1+ \frac{\eps_a + \eps_\uh}{1-\alpha}}{1-\mu - \frac{\mu(\eps_a + \eps_\uh)}{1-\alpha} } \left(1 + \frac{\eps_b + \eps_\uh}{1-\alpha}- \mu - \frac{\mu(\eps_a + \eps_b + 2 \eps_\uh)}{1-\alpha}  \right) \relf{q}{d_0} .
        			\end{align*}
        			Each of the variables $\eps_a, \eps_b$ and $\eps_\uh$ has Taylor coefficient $\frac{\mu^2}{1-\alpha}$, and hence $$\Sc(\Pf, \Sf)_{a,\uh,b} (\dv) =\mu^2 \left( 1 + \frac{\eps_a + \eps_b +\eps_{\uh}}{1 - \alpha }\right)\rel{q^2 \eps^2}.$$ This completes the proof of (c).
        			
        			To justify the final statement on the operator $\Cc$, we need to combine the above estimates. Let $\dv \in \Omega(3)$. For $\Pc$, we have $$\Pc(\Pf, \Rc(\Pf, \Sf)) \proj (\dv) = \Pc (\Pf, \Rf) \proj  (\dv) \rel{ \eps^2} = \Pf \proj (\dv) \rel{q^2\eps^2}, $$ 
        			where the first equation follows from~\thref{l:errorImplication} (b) and  $\Rc(\Pf, \Sf)) \proj (\dv') = \Rf \proj (\dv') \rel{\eps^2} )$ for $\dv' \in Q_1^1(\dv')$.
        			Similarly, using~\thref{l:errorImplication} (c),
        			$$\Sc(\Pc(\Pf,\Rc(\Pf, \Sf)), \Sf ) {\proj}(\dv) = \Sc(\Pf, \Sf) {\proj}(\dv)\rel{q^2 \eps^2} = \Sf {\proj}(\dv)\rel{q^2 \eps^2}.
        			$$
        			By definition of $\Cc$, these two estimates give
        			    $$\cC(\Pf, \Sf) \proj (\dv) = (\Pf, \Sf)\proj (\dv) \rel{\vartheta},$$
        			    as required.
        			
        			\paragraph{Case 2.} If $q=2$, the functions $\Pf, \Rf, \Sf$ are defined in~\eqref{eq:rf3}. %
        			 We follow the same outline for the computations. 
        			 As before, let $\dv \in \Omega(0)$. We start by comparing $\Pft(\dv)$ with $\Pft(\dw)$ for a `perturbed' degree sequence $\dw = \dv - \ve_{\uh}$, where $\uh \subset V$ with $|\uh|\leq 10$.  { For $x \in \uh$, we have $\eps_x' = \eps_x - \frac1d +  O \left(1/(d_0n) \right)$, but those perturbations are only significant in terms which are linear in $\eps$. Moreover, using~\eqref{eq:changedvmu}, the error in replacing $\mu'$ and $\eps'_x$ by $\mu$ and $\eps_x$ for $x \notin K$ is negligible. Hence}
                      \begin{align}
                            \Pft_{auv}(\dv- \ve_{K})  &= \mu \left(1-\frac{\langle \ve_{\uh},  \ve_{auv} \rangle}{d} \right)
                        \left(1 + \frac{\eps_a}{1-\alpha} \right) \left(1 + \frac{\eps_u}{1-\alpha} \right) \left(1 +   \frac{\eps_v}{1-\alpha} \right)  \non \\
                        &\quad\times\left(1 - \frac{\mu(\eps_a \eps_u + \eps_u \eps_v + \eps_v \eps_a)}{1- \mu} \right)\relf{\alpha + \eps}{d_0} 
                        \non \\ 
                        & = \Pft_{auv}(\dv) \left(1-\frac{\langle \ve_{\uh}, \ve_{auv} \rangle}{d} \right) \rel{   \vartheta   },      
                        \label{eq:changedv_prob} 
                    \end{align}
                    since each vertex in $K \cap \{a, u, v \}$ contributes a factor of $(1-1/d+O((\alpha+\eps)/d_0))$.
                     
                     Similarly,  {in $\Sft_{auvb}(\dv - \ve_{\uh})$, the significant contributions correspond  to the vertices in $K \cap \{a, u, v\}$ and $K \cap \{u, v, b \}$, so}
                    \begin{align}    
        			\Sft_{auvb}(\dv - \ve_{\uh}) =& \mu^2 \left(1- \frac{2-2\mu}{d}+(1+\alpha)\left(\eps_a + \eps_b + 2\eps_u + 2\eps_v -\frac{\langle \ve_{\uh}, \ve_{auv}+ \ve_{uvb} \rangle }{d} \right) + \eps_a\eps_b + \eps_u^2 + \eps_v^2    \right) \non
        			\\
                              &  + \frac{\mu^2(2-3\mu)}{1-3\mu} \left( \eps_a \eps_u + \eps_a \eps_v + \eps_b \eps_u + \eps_b \eps_v +2\eps_u \eps_v + \eps_u^2 + \eps_v^2 \right)  \rel{  \frac{\alpha + \eps}{d_0}} .  \label{eq:changedv_path}
                    \end{align}
             For (a), observe that $\Rft_{aa}(\dv) =1$ by definition~\eqref{eq:ratop}. 
                Let $a, b \in V$ with $a\neq b$ and $\dv' = \dv - \eb$. The computation of  $\Rc_{ab} (\dv)$ is simplified by exploiting the symmetry between $a$ and $b$ in the operator definition. By definition of $\bade$, using~{\eqref{eq:changedv_prob}} and~\thref{lemma:sum}, we have
                \begin{align*}
                    \bade (\Pft; a, b, \dw)  &= \frac {1}{\mu \binom {n-1}{2} } \sum_{u \in V \setminus \{a, b \} } \Pft_{aub}(\dw)   = \frac{1}{\binom {n-1}{2}} \sum_{u \in V \setminus \{a, b \} } \left( 1 + \frac{\eps_a + \eps_u + \eps_b}{1-\alpha} \right) \rel{\eps^2 + \frac{1}{d_0}}\\
                    & = \alpha(1 + c_1 {(\eps_a,\eps_b)}  +  O( \eps^2)),
                \end{align*}
                    where $c_1(\eps_a, \eps_b) = O(\eps)$ is symmetric in $a$ and $b$.
                     Similarly, using~\eqref{eq:changedv_path} and~\thref{lemma:sum},
                \begin{align*}
                    \badp (\Sft; a, b, \dw) &= \frac {1}{\mu \binom {n-1}{2} } \sum_{  { \{u, v\} \in \binom{V \setminus \{a, b \}}{2} } } \Sft_{auvb}(\dw)  \\
                    & = \mu(1 -\alpha + \eps_a + \eps_b + c_2 {(\eps_a,\eps_b)} +O( \alpha^2 \eps  + \alpha^3 + \eps^3)),
                \end{align*}
                where $c_2 {(\eps_a, \eps_b)} = O(\alpha \eps + \eps^2)$ is symmetric in $a$ and $b$ (since  $\Sft_{auvb}(\dw)$ is also symmetric up to $O(\eps^3)$ terms). 
                
                Now we are ready to compute $\Rc(\Pft, \Sft)_{ab}(\dvB{})$.   By definition of $\Rc$ and the above computations, we have
                $$\Rc(\Pft, \Sft)_{ab}(\dv) = \frac{1+\eps_a -\alpha - \mu(1-\alpha)- \mu(\eps_a+\eps_b) - \alpha c_1 - \mu c_2 }{1+\eps_b -\alpha - \mu(1-\alpha) - \mu(\eps_a+\eps_b) - \alpha c_1 - \mu c_2  } \rel{\zeta}.$$
                Using~\eqref{eq:ratiocomp} to eliminate $c_1$ and $c_2$, we conclude that
                    $$\Rc (\Pf, \Sf)_{ab} (\dv) = \frac{1+\eps_a - \alpha - \mu(1 - \alpha +\eps_a+\eps_b)}{1+\eps_b - \alpha - \mu(1 - \alpha +\eps_a+\eps_b)} \rel{\zeta + \alpha \eps c_1 + \mu \eps c_2 } = \Rft_{ab}(\dv)\rel{(\zeta},$$
                    completing part (a) for $q=2$.

                To show (b), we will compute $\Pc(\Pft, \Rft)_{auv}(\dv)$ according to the formula~\eqref{eq:probop}. Recall that, as in Case 1, we can replace the relevant approximations $\Rft$ and $\Pft$ by the corresponding quantities evaluated at $\dv$ using~\eqref{eq:changedv_rat} and~\eqref{eq:changedv_prob}. Therefore, we have
                	\begin{equation} \label{eq:prf3}
                	\Pc(\Pft, \Rft)_{auv}(\dv) =d_{v} \left(\sum_{bw \in X}  K_{bw} \right)^{-1} \rel{\frac{1}{d_0n} + \eps^3},
                	\end{equation}
                	where $X=\{b, w \in V  \setminus \{v \}: b< w \}$ and
                \bee
                    K_{bw} = \Rft_{ba} (\dv)\Rft_{wu} (\dv)
                	\frac{1-\Pft_{bw v}(\dv ) \left(1 - 3d^{-1} \right)}
                	{1-\Pft_{au v}(\dv)\left(1-3d^{-1} \right)}
                \ee
                is a rational function in $\eps_b$ and $\eps_w$. Expanding about $\eps_b$ and $\eps_w$, one can see that $K_{bw} = \kappa_0 + \eps_b + \eps_w + \kappa_1(\eps_b + \eps_w)+ O(\eps^3)$
                { for some functions $\kappa_0$ and $\kappa_1$ which are independent of $\eps_b$ and $\eps_w$ and satisfy} 
                $\kappa_1 = O(\alpha + \eps)$. The coefficient of $\eps_b$ can be computed using computer algebra, or by inspection -- the only significant terms are $(1-\mu)\eps_b$ coming from $\Rft_{ba}(\dv)$ and $\mu \eps_b$ coming from $\Pft_{bwv}(\dv)$. 
                Similarly, $ \kappa_0 = \frac{1}{(1+ \eps_a)(1+\eps_u)}+(\eps_a \eps_u + \eps_u\eps_v + \eps_v \eps_a)\cdot \frac{\mu}{1-\mu} -\alpha(\eps_a + \eps_u)  + O(\vartheta)$, with the first term coming from $\Rft_{ba} (\dv)\Rft_{wu} (\dv)$. Summing over $b$ and $w$, and using $\sum_{b \in V} \eps_b =0$, we get
                \begin{align*}\sum_{bw \in X} K_{bw} &= \kappa_0 - \alpha \eps_v + O(\alpha^2 \eps + \alpha \eps^2 + \eps^3) \\
                &= \frac{1}{(1+ \eps_a)(1+\eps_u)}+(\eps_a \eps_u + \eps_u\eps_v + \eps_v \eps_a)\cdot \frac{\mu}{1-\mu} -\alpha(\eps_a + \eps_u) - \alpha  \eps_v + O(\vartheta). 
                \end{align*} 
                
                Now we can check that the right-hand side of~\eqref{eq:prf3} coincides with $\Pft_{auv}(\dv)$ given in~\eqref{eq:rf3} up to $O(\vartheta)$. %
                View both expressions as rational functions in $\eps_a, \eps_u$ and $\eps_v$. Now, the coefficient of $\eps_a, \eps_u$ and $\eps_v$ in both expressions is $\mu(1+ \alpha) + O(\alpha^2)$. Moreover, the coefficient of $\eps_a \eps_u$ is $\frac{-\mu}{1-\mu} + O(\alpha)$. It follows that $\Pc(\Pft, \Rft)_{auv}(\dv)  = \Pft_{auv}(\dv)\rel{\vartheta}$, as required for (b).

                 For (c), by definition of $\Sc$~\eqref{eq:pathop} and approximations~\eqref{eq:changedv_prob} and~\eqref{eq:changedv_path},
                            \bee    
                                \Sc(\Pft, \Sft)_{auv b}  (\dv)= 
                                \frac{\Pft_{auv}(\dv)}{ 1- \Pft_{auv}(\dv  ) \left( 1- \frac 3d  \right) } \left(\Pft_{b uv} (\dv ) \left( 1- \frac 2d \right) - \Sft_{auv b}(\dv)\left( 1-\frac 5d  \right) \right)\rel{\vartheta}.
                                \non
                            \ee
                    One can check that up to the desired accuracy, the right-hand side agrees with $\Sft_{auvb}(\dv)$. {To assist with this, since the negligible terms $\eps^3$, $\alpha \eps^2$, $\alpha^2 \eps$ and $\alpha^3$ have total degree 3 in  $\alpha$ and $\eps$, we may introduce a size variable $\sf y$, substitute $\sf \alpha:= \alpha* y$, $\sf \eps := \eps * y$, $d:= d / {\sf y}^2$ and discard all $O(\sf y^3)$-terms.}  This completes the proof of (c).

                As in Case 1, the statement for $\cC$ follows from the estimates above and \thref{l:errorImplication}.  This finishes the proof of \thref{claim:invariant}.
        	\end{proof}
       
        The remainder of the proof of \thref{l:ratiosDense} is unified, i.e.~it holds for $q \geq 2$ satisfying the hypotheses of the present lemma. 
            We have just shown that $\Cc(\Pf, \Sf) \proj (\dv) = (\Pf, \Sf) \proj (\dv)(1 \pm C_1\vartheta)$ for an absolute constant $C_1$ and $\dv \in \Omega(3)$. Next, we
                 claim that for $\dv \in \Omega(3t_0)$, $\cC^{(t_0)}(\Pf, \Sf) \proj(\dv)= (\Pf, \Sf) \proj (\dv)\rel { C_1 \vartheta}$. 
                 
                 To see this, note that iterating \thref{c:contraction}  gives $\cC^{j}(\Pf, \Sf) \proj (\dv) = \cC^{j-1}(\Pf, \Sf) \left( 1 \pm e^{-j} C_1\vartheta \right)$ for $\dv \in \Omega(3j)$. Recall that $\Omega(3t_0) \subset \Omega(j)$ for $j \leq 3t_0$. Therefore, if $\dv \in \Omega(3t_0)$, the previous inequality implies that
                \begin{align}   \label{eq:cinv}
                    \cC^{(t_0)}(\Pf, \Sf)\proj (\dv) = (\Pf, \Sf) \proj (\dv) \left (1 \pm  \left( \sum_{j = 0}^\infty e^{-j} \right)  C_1 \vartheta \right)   = (\Pf, \Sf) \proj (\dv)  \rel{ \vartheta }. 
                 \end{align}

        		The invariance of $(P, \sp)$ under $\cC$ follows from~\thref{t:recrel}, but we need to check that the $k$-graphicality 
        		 conditions of the lemma are satisfied.        As usual, all the inequalities hold for $n$ sufficiently large.
                We established that any $\dv \in \Omega{(0)}$ has spread at most $\eps d_0 = o(d/q)$, so Proposition~\ref{prop:kgraphical}~(i) implies that $\Num(\dv) > 0$ provided $\sum_{v \in V}d_v \equiv 0 \pmod{q+1}.$ 
                Now consider any $\uh \in \binom{V}{q+1}$ and  $\dv\in \Omega{(3)}$.~\thref{l:simpleSwitching}, together with $\mu<c/q$ and $d_v \leq d(1 + o(1/q))$ implies that for $\uh \in \binom{V}{q+1}$ and $a \in V$,
                \bel{Pbound2}
                 P_{\uh}(\dv) \leq 
         			 \frac{2 q!}{(m(q+1))^q} \cdot  \prod_{v \in \uh} d_v  \leq \frac{3q! d}{n^q}
                \quad  \mbox{for all $\dv\in \Omega{(0)}$}.
                \ee 
                 In particular, $\Num_{\uh} (\dv) < \Num (\dv)$. Moreover, for $\dv \in \Omega{(1)}$, \eqref{Pbound2} implies that  $ \Num(\dv-\ve_{\uh})>0$ and $P_{\uh}(\dv-\ve_{\uh}) <1$.
                 Thus $ \Num_{\uh}(\dv)>0$ by Lemma~\ref{trick17}. Therefore,~\thref{t:recrel} implies that for any $\dv \in \Omega {(3)}$,      \bel{eq:cinv1} \cC(P, S) \proj (\dv) =(P, S ) \proj(\dv).\ee 

                Now we move on to comparing $\cC^{t_0}(P, \sp)$ and $\cC^{t_0}(\Pf, \Sf)$. %
                 We start by establishing some initial bounds on $P$ and $\sp$ so that the conditions of~\thref{c:contraction} are satisfied.
                We have shown that 
                for $\dv \in \Omega{(0)}$ with $\sum_{v \in V}d_v \equiv 0 \pmod{q+1}$,  %
                $P \proj (\dv)<\frac{3q!d}{n^q} \sim 3\mu_0$, where the approximation $\binom nq \sim n^q / q!$ is valid since $q^2  = o(n)$. Relation~\eqref{eq:Sfixed} then gives $\sp_{a, \uh, b} (\dv) \leq 2 P_{a, \uh}(\dv) P_{b, \uh} (\dv-\ve_{a, \uh}) 
                \leq 20 \mu_0 ^2 $ for $\dv \in \Omega(1)$ and sufficiently large $n$. By definition of $\Rc$, it follows that $R {\proj}(\dv- \ve_\uh) =\Rc(P, \sp) {\proj} (\dv - \ve_{\uh}) = 1 + O(\eps + \alpha + \mu_0) = 1+O(c)$ for $\uh \in \binom{V}{k-1}$ and $\dv \in \Omega(2)$. Applying $\Pc$, we infer $P {\proj}(\dv) = \Pc(P, R) {\proj}(\dv) = \mu_0 \rel{c}$.
                We conclude that  $P \proj (\dv) =  \mu_0\rel{c} = \Pf \proj (\dv) \rel{c}$ for $\dv \in \Omega(2)$.
                
                Furthermore, since $\sp \proj (\dv) \leq 20 \mu_0^2 \leq 40 \Sf \proj (\dv)$ for $\dv \in \Omega(0)$ with $\sum_{v \in V}d_v \equiv 0 \pmod{q+1}$, we can apply~\thref{l:errorImplication} (c) with $\xi_1 = O(c)$ and $\xi = 40$ to get that for $\dv \in \Omega(3)$,
                $$\sp_{a, \uh, b}(\dv) = \Sc(P, \sp)_{a, \uh, b}(\dv) =\Sc(\Pf, \Sf)_{a, \uh, b}(\dv) \rel{c + \mu_0}
                = \Sf_{a, \uh, b}(\dv) \rel{c}.$$

               {With $c$ taken sufficiently small,}  $t_0-1$ iterated applications of~\thref{c:contraction}  produce 
                $$
                 \cC^{t_0-1}(P, \sp)\proj (\dv) = \cC^{t_0-1}(\Pf, \Sf) \proj (\dv) \rel {e^{-t_0} }= \cC^{t_0-1}(\Pf, \Sf) \proj (\dv) \rel {\vartheta}
                $$
                for $\dv \in \Omega(3t_0)$, where we used $e^{-t_0} = (d_0n)^{-1} = O(\vartheta)$.
                Combining this bound with \eqref{eq:cinv} and \eqref{eq:cinv1}, we get 
                \bee        \label{eq:PSproof}
                    (\Pf, \Sf)\proj  (\dv)= (P, \sp)\proj (\dv) \rel{\vartheta } \text{ for }\dv \in \Omega(3t_0).
                \ee   {This equation is used below to establish~\thref{Pdense}.}  Finally, let $\dv  \in \D_0$, so that $\dv -\ea$ and $\dv -\eb$ are in $\Omega(3t_0)$ for any $a, b \in V$.
                Applying $\Rc$ once more, with Lemma~\ref{l:errorImplication}(a) and triangle inequality, gives
                 $$\Rf {\proj}(\dv) = \Rc(\Pf, \Sf) {\proj}(\dv)\rel{\zeta} = \Rc(P, \sp) {\proj}(\dv)\rel{(\mu_0 + \alpha)\vartheta} = R {\proj}(\dv)\rel{\zeta}.$$
                 
                 It remains to verify that for $q \geq 2$, $a \neq b \in V$ and $\dv \in {\D_0}$,
                 $$\Rf_{ab}(\dv) = \frac{d_a \left(\binom{n-1}{q} - d_b \right)}{d_b \left(\binom{n-1}{q} - d_a \right)}
        								\left(1 + \frac{(d_a-d_b)q}{d_0(1-\mu_0)(n-1-q)} \right) \rel{\zeta} ,$$
        								which is the expression claimed in~\eqref{eq:ratiostatement}. Indeed, introducing the parameters $\eps_a, \eps_b$ and expanding in $\alpha$, the right-hand can be rewritten as 
        			\begin{align*}
        			    &\frac{1+\eps_a}{1+\eps_b} \cdot \frac{1-\mu - \mu \eps_b}{1- \mu - \mu \eps_a} \left( 1+ \frac{\alpha(\eps_a-\eps_b)}{(1-\mu)(1-\alpha)}\right)\\
        			    =& \frac{1 + \eps_a - \mu - \mu \eps_a - \mu \eps_b}{1 + \eps_b - \mu - \mu \eps_a - \mu \eps_b} + O(\mu \eps^3) + \alpha \cdot \frac{(\eps_a-\eps_b)}{1-\mu}\rel{\eps}+ O(\alpha^2 \eps),
        			\end{align*}
                which coincides with the definition of $\Rf_{ab}(\dv)$ in~\eqref{eq:rf} and~\eqref{eq:rf3}, respectively, up to the error term $\zeta$. We remark that the ratio formula was stated in the form~\eqref{eq:ratiostatement} so that it can be readily used in the next step of the proof.
        	\end{proof}    
            
            \begin{remark}
        A hand-checkable proof for $q=2$ would be much simpler if one could exploit the symmetry in $\bade$ and $\badp$ in a rigorous way. This would entail showing that $\Pft_{a,\uh, b}(\dv')$ and $\Sft_{a,\uh, b}(\dv')$ remain symmetric in $\eps_a$ and $\eps_b$ up to $O(\eps^3)$-terms throughout the iterations of $\Cc$, and conceptually the simplest way to do that is to just compute the second-order terms in $\eps$. It would be interesting to find a more direct argument for this symmetry.
        \end{remark}
        Let us make it explicit how~\thref{Pdense} follows from the previous proof.
        \begin{proof}[Proof of~\thref{Pdense}]
                The hypothesis of the proposition is that the
                 parameters  $c, k, d, n, m, \mu, \varphi$ satisfy
                 $$k <c d^{\varphi}, \quad k \mu + k^2 / n < c \text { \quad and  \quad} \log^3 n < d.$$
                 These inequalities are assumed in~\thref{l:ratiosDense} with $d_0 = d$, $\eps = d^{-\varphi}$. Hence,~\eqref{eq:PSproof}, recalling the definition of $\Pf$ in~\eqref{eq:rf} and $\vartheta < k^2 d^{-2 \varphi}$  implies that for $\dv \in \D$,
                    \bee       \non
                        P_K(\dv) = \mu\left(1 + \frac{1 + \eps_K}{1-\alpha} \right) \rel{k^2d^{-2\varphi}}.
                     \ee
                Substituting $\mu = d \binom{n-1}{k-1}^{-1}$, $\alpha = \frac{k-1}{n-1}$ and 
                 $\eps_K= \sum_{v \in K} \frac{d_v-d}{d}$, we recover the expression claimed in~\thref{Pdense}. 
            \end{proof}

            		\begin{proof}[Proof of~\thref{t:denseCase}]

         	The relevant ratios $R_{ab}(\dv)$ were computed in \thref{l:ratiosDense}. The remainder of the proof is very similar to the sparse case, Theorem~\ref{t:sparseCase}. Some amount of repetition {is} necessary as certain conditions and error estimates need to be checked in either case. We emphasise that the sequences we are dealing with from now on have sum $dn$ or $dn+1$.  Recall that $\alpha = \frac{k-1}{n-1}$ and  $\sigma^2(\dv) = \frac 1n \sum_{v \in V}(d_v-d)^2$ for a sequence $\dv$ with average $d$. 
        	
         {Recall that the set $\D$ from the theorem statement contains the sequences of length $n$ with sum $ dn$ and spread at most $d^{1-\varphi}$.}             
            			\begin{claim}\lab{cl:Htilde}
            			   Let $H(\dv)=\pr_{\Bknm}(\dv) \widetilde H(\dv)$, where 
                $$ \widetilde H(\dv) = \exp\left( \frac{\alpha n}{2} - \frac{\alpha n \sigma^2(\dv)}{2d(1-\mu)(1-\alpha)} \right).$$ If $\dv-\ea$ and $\dv -\eb$ are elements of $\D$, then
                             \bel{eq:ratiosDense}
                             R_{ab}(\dv) = \frac{H(\dv-\ea)}{H(\dv-\eb)} \rel{\delta}
            			    \text{ \quad with \quad} \delta =
                    \begin{cases}
                        (\mu + \alpha)k^2 d^{-2\varphi}, & k \geq 4 \\
                        \mu d^{-3\varphi} + \alpha d^{-2\varphi}, & k = 3.
                        \end{cases}. \ee
            			\end{claim}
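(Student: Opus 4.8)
The plan is to read off $R_{ab}(\dv)$ from \thref{l:ratiosDense} applied with $\dv_0=\dv$, to compute $H(\dv-\ea)/H(\dv-\eb)$ by hand, and then to verify the two agree up to relative error $O(\delta)$. Since $\dv-\ea,\dv-\eb\in\D$ have entry-sum $dn=km\equiv0\pmod k$, the sequence $\dv$ has sum $dn+1\equiv1\pmod k$, so \thref{l:ratiosDense} applies with $q=k-1$, $d_0=d+\tfrac1n\sim d$, $\eps=2d^{-\varphi}$ (which exceeds the spread of $\dv$ divided by $d_0$ once $n$ is large), and $\mu_0=d_0\binom{n-1}{q}^{-1}\sim\mu$. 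Its hypotheses follow from \eqref{eq:conds}: $q(\mu_0+\alpha)<c$ from $k\mu<c$ and $k^2/n=o(1)$; $\eps q<c^{1/3}$ from $kd^{-\varphi}=(k^3d^{1-3\varphi}/d)^{1/3}=o(1)$; $d_0>\log^3n$ and $\eps^2d_0\sim d^{1-2\varphi}\to\infty$ from $\log^Cn=o(d)$ and $\varphi<\tfrac12$; graphicality of $\dv-\ea,\dv-\eb$ is handled inside \thref{l:ratiosDense} via Proposition~\ref{prop:kgraphical}(i). One then checks that the lemma's error $\zeta$ is $O(\delta)$: for $k\ge4$ this is immediate since $\zeta=q^2(\mu_0+\alpha)\eps^2$ and $\eps\sim d^{-\varphi}$; for $k=3$ one uses that $\mu<c/k$ forces $d\le\mu\binom{n-1}{2}=O(n^2)$, hence $\alpha d^{\varphi}=2d^{\varphi}/(n-1)=o(1)$, so the terms $\alpha^2\eps$ and $\alpha^3$ in $\zeta$ are absorbed into $\alpha d^{-2\varphi}$, while $\mu_0\eps^3=O(\mu d^{-3\varphi})$.

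Next I compute the right-hand side. Writing $N=\binom{n-1}{k-1}$, formula \eqref{eq:condbinom} gives $\pr_{\Bknm}(\dv-\ea)/\pr_{\Bknm}(\dv-\eb)=d_a(N-d_b+1)/\big(d_b(N-d_a+1)\big)$, using $\binom{N}{d_a-1}/\binom{N}{d_a}=d_a/(N-d_a+1)$. For the correction factor, the constant $\tfrac{\alpha n}{2}$ cancels in the ratio, and a one-line expansion of the squares gives $\sigma^2(\dv-\ea)-\sigma^2(\dv-\eb)=-2(d_a-d_b)/n$, so
\[
\frac{\widetilde H(\dv-\ea)}{\widetilde H(\dv-\eb)}=\exp\!\left(\frac{\alpha(d_a-d_b)}{d(1-\mu)(1-\alpha)}\right)=\exp\!\left(\frac{q(d_a-d_b)}{d(1-\mu)(n-1-q)}\right),
\]
using $\alpha/(1-\alpha)=q/(n-1-q)$.

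It remains to match this against \thref{l:ratiosDense}, i.e.\ $R_{ab}(\dv)=\frac{d_a(N-d_b)}{d_b(N-d_a)}\big(1+\tfrac{q(d_a-d_b)}{d_0(1-\mu_0)(n-1-q)}\big)\rel{\zeta}$. The factor $d_a/d_b$ is common. The first subtlety is that $\frac{N-d_b}{N-d_a}$ appears instead of $\frac{N-d_b+1}{N-d_a+1}$: I compare these in \emph{ratio} form, $\frac{(N-d_b)(N-d_a+1)}{(N-d_a)(N-d_b+1)}=1+\frac{d_a-d_b}{(N-d_a)(N-d_b+1)}=1+O(d^{1-\varphi}/N^2)$, and $d^{1-\varphi}/N^2=o(\delta)$ since $\frac{d^{1-\varphi}/N^2}{\mu d^{-3\varphi}}=\frac{d^{2\varphi}}{N}\le(c/k)^{2\varphi}N^{2\varphi-1}\to0$. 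The second subtlety is that the lemma's \emph{linear} factor must be compared with the \emph{exponential} in $\widetilde H$: writing $t=\frac{q(d_a-d_b)}{d(1-\mu)(n-1-q)}=O(kd^{-\varphi}/n)$ and noting $d_0=d\,(1+O(\tfrac1{nd}))$, $\mu_0=\mu\,(1+O(\tfrac1{nd}))$, one gets $1+\frac{q(d_a-d_b)}{d_0(1-\mu_0)(n-1-q)}=1+t+O(|t|/nd)=e^{t}\big(1+O(t^2+|t|/nd)\big)$, and both $t^2=O(k^2d^{-2\varphi}/n^2)$ and $|t|/nd$ are $o(\delta)$ (e.g.\ $t^2/(\alpha k^2 d^{-2\varphi})=O(1/(kn))$). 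Multiplying the three comparisons and using $\zeta=O(\delta)$ yields $R_{ab}(\dv)=\frac{\pr_{\Bknm}(\dv-\ea)\,\widetilde H(\dv-\ea)}{\pr_{\Bknm}(\dv-\eb)\,\widetilde H(\dv-\eb)}\rel{\delta}=\frac{H(\dv-\ea)}{H(\dv-\eb)}\rel{\delta}$, which is the claim.

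The main obstacle is this last matching step. A naive approach would try to bound each individual correction ($1/N$ in the $\binom Nd$ ratios, the gap between $d_0,\mu_0$ and $d,\mu$, the replacement of $1+t$ by $e^{t}$) by $o(\delta)$, but neither $1/N$ nor $t$ itself is $o(\delta)$ when $\varphi$ is close to $\tfrac12$. The point is that these corrections occur symmetrically in the two factors making up $H(\dv-\ea)/H(\dv-\eb)$, so their leading parts cancel in the ratio and only second-order remainders survive; these remainders (of order $d^{1-\varphi}/N^2$, $t^2$, $|t|/nd$) are genuinely $o(\delta)$, the two inputs that make this work being $\varphi<\tfrac12$ and $d=O(\binom{n-1}{k-1}/k)$ (equivalently $\mu<c/k$).
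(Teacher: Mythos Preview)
Your proof is correct and follows essentially the same route as the paper: apply \thref{l:ratiosDense} with $d_0=d+1/n$ and $\eps=2d^{-\varphi}$, expand $H(\dv-\ea)/H(\dv-\eb)$ via~\eqref{eq:condbinom} and the definition of $\widetilde H$, and match the two expressions up to $O(\delta)$. Your treatment of the matching is a touch more explicit than the paper's (you spell out the $t^2$ error from replacing $1+t$ by $e^t$, the $|t|/nd$ error from $d_0$ versus $d$, and the $k=3$ absorption of $\alpha^2\eps,\alpha^3$ into $\alpha d^{-2\varphi}$ via $\alpha d^{\varphi}=o(1)$), whereas the paper bundles these into the single remark that the mismatch is $O(\eps^2\alpha^2)=O(\delta)$ together with a separate check that $d^{1-\varphi}/\binom{n-1}{k-1}^2=O(\delta)$; but the substance is the same.
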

            		\begin{proof}
	We claim that if $\dv -\ea$ lies in $\D$ for $a \in V$, then $\dv$ satisfies the hypotheses of~\thref{l:ratiosDense} with $d_0 = d+1/n$ (the average of $\dv$) and $\eps = 2d^{- \varphi}$. Indeed, the spread of $\dv$ is at most $d^{1-\varphi}+1$ since the spread of $\dv-\ea$ is at most $d^{1-\varphi}$, and moreover, from the hypothesis $k^3d^{1-3\varphi}<c$ in~\thref{t:denseCase}, we deduce that ${\eps q<} 2k d^{-\varphi}<c^{1/3}$ for large $n.$ Hence we may apply~\thref{l:ratiosDense} and deduce that~\eqref{eq:ratiostatement} holds for $\dv$. 

            			To evaluate the right hand side of~\eqref{eq:ratiosDense} we use $km = dn$,~\eqref{eq:condbinom} and compute 
                \begin{align} \non
                    \frac{H(\dv-\ea)}{H(\dv-\eb)}&=   \frac{d_a \left(\binom{n-1}{k-1}- d_b +1\right)}{d_b \left( \binom{n-1}{k-1} -d_a +1 \right)}\exp\left( \frac{\alpha n}{2dn(1-\mu)(1-\alpha)} (2d_a - 2d_b) \right) \\
                \non
                    &= \frac{d_a \left(\binom{n-1}{k-1}- d_b \right)}{d_b \left( \binom{n-1}{k-1} -d_a \right)}\exp\left( \frac{\alpha (d_a - d_b) }{d(1-\mu)(1-\alpha)} \right) \relf{d^{1-\varphi}}{\binom{n-1}{k-1}^2}.
                \label{eq:ratioDense}
                \end{align}
                 whenever $\dv - \ea, \dv - \eb \in \D$.
                The estimate in the second line follows from~\eqref{eq:ratiocomp}. The expression on the right-hand side matches {the approximation~\eqref{eq:ratiostatement}} for the ratio $R_{ab}(\dv)$ up to a relative error $O(\eps^2 \alpha^2) = O(\delta) $.  Finally, to check that $d^{1-\varphi}\binom{n-1}{k-1}^{-2} = O(\delta) $, we split into two cases once again. For $k=3$, $d^{1-\varphi}\binom{n-1}{k-1}^{-2} = O\left(\alpha^4 d^{1-\varphi} \right)= O(\alpha d^{-2\varphi}) $. 
                For $k\geq 4$, we have $d^{1-\varphi}\binom{n-1}{k-1}^{-2} = O(\mu^2 d^{-1-\varphi}) = O(\delta)$.  We conclude that
                $\frac{ H(\dv-\ea)}{H (\dv- \eb) } = R_{ab}(\dv) \rel{\delta}$, as required.
            			\end{proof}

              If $d$ is an integer, let $\dv_{\reg}  = (d, d, \dots, d) \in \D$. For any sequence $\dv \in \D$, there is a sequence of sequences $\dv=\dv_1,\dv_2,\ldots, \dv_t=\dv_{\reg} \in \D$ with $t \leq 2nd^{1-\varphi}$ in which consecutive sequences are adjacent {(where we recall that two sequences are adjacent if they can be written as $\dv'-\ea$ and $\dv'-\eb$ for some $\dv' \in \Z^n$ and $a, b \in V$).} Let $r = nd^{1-\varphi}$. %
                By `telescoping'~\eqref{eq:ratiosDense}, we deduce that for $\dv \in \D$, 
              \bel{eq:telescopeD}
                \frac{\pr_{\Dknm}(\dv)}{H(\dv)} = c_1 {e^{O(r \delta)} },
              \ee
        where $c_1 =  \pr_{\Dknm}(\dv_{\reg})/H(\dv_{\reg})$.
         On the other hand, if $d$ is not an integer, then  we reach a similar conclusion by {defining  $\dv_{\reg}$ to be an appropriate sequence in} $\{ \lfloor d \rfloor, \lfloor d \rfloor +1\}^n$.
             
             We will show that $c_1 = 1 + {O(\eta_k)}$ using the fact that $\tilde{H}(\dv)$ is concentrated around 1 in both $\Bknm$ and $\Dknm$. {We isolate the following statement because it will be useful later, in proving \thref{t:denseCase}.}

        \begin{claim}\thlabel{cor:binapp_dense}
        Define
                    $$ \W = \left \{ \dv \in \D : \left| \sigma^2(\dv)-d(1- \mu) \right| \leq d \xi \right \},$$  %
        where  
        $$
     \xi = \frac{k \log^2 n}{\sqrt{n}}.
        $$
        Then $\pr_{\Dknm}(\W)= 1 - O \left(n^{-\omega(1)}\right)$  and $\pr_{\Bknm}(\W) = 1 - O \left(n^{-\omega(1)}\right)$.
        Moreover, uniformly for $\dv  \in \W$,  
        \bel{newD}
        \pr_{\Dknm}(\dv) = \pr_{\Bknm}(\dv) (1+ {O(\eta_k)}) . %
        \ee
                \end{claim}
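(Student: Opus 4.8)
The plan is to imitate the proof of \thref{cor:binapp_sparse}, now driven by the telescoped identity \eqref{eq:telescopeD} --- which gives $\pr_{\Dknm}(\dv)/H(\dv) = c_1\,e^{O(r\delta)}$ for all $\dv \in \D$, with $r = nd^{1-\varphi}$ and $\delta$ as in \eqref{eq:ratiosDense} --- together with the two parts of \thref{l:sigmaConc}. It suffices to establish: (i) $\D$ and $\W$ both have probability $1 - O(n^{-\omega(1)})$ in $\Dknm$ and in $\Bknm$; (ii) $\tilde H(\dv) = 1 + O(\eta_k)$ uniformly for $\dv \in \W$; and (iii) $c_1 = 1 + O(\eta_k)$. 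Granting these, substituting $c_1 = 1 + O(\eta_k)$ and $r\delta = O(\eta_k)$ into \eqref{eq:telescopeD} gives $\pr_{\Dknm}(\dv) = (1 + O(\eta_k))H(\dv) = (1 + O(\eta_k))\pr_{\Bknm}(\dv)\tilde H(\dv)$ on $\D$, and applying (ii) on $\dv \in \W \subseteq \D$ then yields \eqref{newD}. We may freely compose the $(1+O(\eta_k))$ factors because $\eta_k < 3c < 1$ for large $n$ by \thref{t:denseCase}.

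For (i), I would bound $\pr_{\Dknm}(\dv\notin\D)$ by a union bound over vertices: \thref{l:sigmaConc}(i) with its parameter $\alpha = d^{1-\varphi}$ gives $\pr(|d_v - d| > d^{1-\varphi}) \le 2\exp(-\min\{d^{1-2\varphi}/3,\, d^{1-\varphi}/3\})$, and since $\varphi < 1/2$ and $\log^C n = o(d)$ for every fixed $C$, the exponent is $\omega(\log n)$, so $\pr_{\Dknm}(\D) = 1 - O(n^{-\omega(1)})$. For $\W$ I would use \thref{l:sigmaConc}(ii) with $\beta = \sqrt{\log n}$ (hence $\beta k \ge 100$ and $O(n^{-\beta/10}) = O(n^{-\omega(1)})$) together with $\Var\, d_1 = d(1-\mu)(1 + O(k/n))$, which give $|\sigma^2(\dv) - d(1-\mu)| \le d\xi$ off an event of probability $O(n^{-\omega(1)})$, \emph{provided} the deviation $2d(\beta k\log n/\sqrt n + \sqrt{(\beta k\log n)^3/(nd)})$ in \thref{l:sigmaConc}(ii) is at most $\frac{1}{2}d\xi$. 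With $\xi = k\log^2 n/\sqrt n$ this reduces to $d$ being at least of order $k\sqrt{\log n}$, which holds for large $n$: the hypothesis $k^3 d^{1-3\varphi} < c$ with $\varphi > 4/9$ forces $d$ to grow faster than any fixed power of $k$, so it dominates $k\sqrt{\log n}$ when $k\to\infty$, while for bounded $k$ the hypothesis $\log^C n = o(d)$ does. Running the same computations with case (b) of \thref{l:sigmaConc} gives $\pr_{\Bknm}(\D) = \pr_{\Bknm}(\W) = 1 - O(n^{-\omega(1)})$.

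For (ii), writing $\tilde H(\dv) = \exp(\frac{\alpha n}{2}(1 - \frac{\sigma^2(\dv)}{d(1-\mu)(1-\alpha)}))$ and substituting $\sigma^2(\dv) = d(1-\mu)(1 \pm \xi)$ for $\dv \in \W$, the exponent becomes $-\frac{\alpha^2 n}{2(1-\alpha)} \mp \frac{\alpha n\xi}{2(1-\alpha)} = O(k^2/n) + O(k\xi) = O(k^2\log^2 n/\sqrt n)$, whence $\tilde H(\dv) = 1 + O(k^2\log^2 n/\sqrt n) = 1 + O(\eta_k)$, this being exactly the first summand of $\eta_k$ (for $k = 3$ it is $O(\log^2 n/\sqrt n)$). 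For (iii), summing \eqref{eq:telescopeD} over $\W$ and using (ii) we get $\pr_{\Dknm}(\W) = c_1 e^{O(r\delta)}\sum_{\dv \in \W}H(\dv) = c_1 e^{O(r\delta)}(1 + O(\eta_k))\pr_{\Bknm}(\W)$; since both $\W$-probabilities are $1 - O(n^{-\omega(1)})$, it remains to verify $r\delta = O(\eta_k)$, which is immediate from \eqref{eq:ratiosDense} and $r = nd^{1-\varphi}$: for $k \ge 4$, $r\delta = n(\mu + \alpha)k^2 d^{1-3\varphi} = O((\mu n + k)k^2 d^{1-3\varphi})$ using $\alpha n \sim k-1$; for $k = 3$, $r\delta = n\mu\,d^{1-4\varphi} + n\alpha\,d^{1-3\varphi} = O(d^{2-4\varphi}/n + d^{1-3\varphi})$ using $\mu n \sim 2d/n$ --- both $O(\eta_k)$. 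Hence $c_1 = 1 + O(\eta_k + n^{-\omega(1)}) = 1 + O(\eta_k)$, completing (iii) and the claim.

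The main obstacle is not conceptual --- the argument is a transcription of the sparse case --- but lies in two pieces of bookkeeping: first, confirming that the hypotheses of \thref{t:denseCase} genuinely force $d$ to be large enough (relative to $k\sqrt{\log n}$, and to $\mu n$-sized quantities) for the $\sigma^2$-concentration in \thref{l:sigmaConc}(ii) to have error $o(d\xi)$; and second, checking that $r\delta$ and the $\tilde H$-fluctuation collapse \emph{exactly} onto the prescribed $\eta_k$ in both the $k = 3$ and $k \ge 4$ regimes once $\mu n$, $\alpha n$ and the $\varphi$-powers of $d$ are re-expressed in terms of $d$, $n$, $k$.
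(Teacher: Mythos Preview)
Your proposal is correct and follows essentially the same approach as the paper: use \thref{l:sigmaConc}(i) and (ii) to show $\D$ and $\W$ are typical in both models, observe that $\tilde H(\dv)=e^{O(k\xi)}$ on $\W$, sum \eqref{eq:telescopeD} over $\W$ to pin down $c_1$, and check $r\delta=O(\eta_k)$ case by case. The only cosmetic differences are that the paper takes $\beta=\log n/4$ rather than $\sqrt{\log n}$ in \thref{l:sigmaConc}(ii), and it verifies $\eta_k<3c$ explicitly inside the claim (you instead cite it from the theorem statement, which is mildly circular since the claim sits inside that theorem's proof; you should include the short verification yourself). Your phrasing that $k^3 d^{1-3\varphi}<c$ forces $d$ to grow ``faster than any fixed power of $k$'' is inaccurate---it only gives $d\gtrsim k^{3/(3\varphi-1)}$---but combining this with $\log^C n=o(d)$ still yields $d\gg k\sqrt{\log n}$, so your conclusion stands.
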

                \begin{proof}
                    We start by estimating the probability of $\W$ in the two probability spaces. \thref{l:sigmaConc}(i)  implies that if $\dv$ is sampled from $\Bknm$ or $\Dknm$, $|d_v -d| \leq 20 \sqrt{d \log n} < d^{1-\varphi}$ for all $v \in V$ with probability $1- O \left(n^{-\omega(1)}\right)$ (for the last inequality, we used the assumption {of~\thref{t:denseCase}} that $\log^C n = o(d)$ for any integer $C$).  {Thus   $\dv\in\D$.}  
                     
                     Secondly, we claim that if $\dv$ is sampled from $\Bknm$ or $\Dknm$ then         $\sigma^2 (\dv) = d(1-\mu)(1\pm \xi )$ with probability $1-O \left(n^{-\omega(1)}\right)$.
                    Indeed, this follows from~\thref{l:sigmaConc}(ii)  with $\beta = \log n/4$, by noticing that in this case {the second term in the deviation is    {dominated by $\xi^2=O(\xi)$} since $k^3 < cd^{{3 \varphi-1}} < cd^{1/2}$}, and hence  $\frac{ k \log^2 n}{d} <  d^{-1/2} \log^2 n \to 0$. By definition of $\W$, we may now conclude that
                    $$\pr_{\Bknm}(\W) = 1 - O \left(n^{-\omega(1)}\right) \quad \text{and} \quad \pr_{\Dknm}(\W) = 1 - O \left(n^{-\omega(1)}\right).$$

                    {It is easy to check that  $\tilde{H}(\dv) = e^{O(k \xi)}$}   for $\dv \in \W$, {and hence} 
                    \bee
                          \sum_{\dv \in \W} H(\dv) = { e^{O(k \xi)}}\sum_{\dv \in \W} \pr_{\Bknm}(\dv)   
                         =  { e^{O(k \xi)}}.
                     \ee
                    Using~\eqref{eq:telescopeD} and the above estimates, we get  
                    $$
                   \pr_{\Dknm}(\W) =\sum_{\dv \in \W} \pr_{\Dknm}(\dv) =  c_1 {e^{O(r \delta)}} \sum_{\dv \in \W} H(\dv)  = c_1 {e^{O(r \delta + k \xi)}}.
                    $$
                           From this estimate and $\pr_{\Dknm}(\W) =  1 -O \left(n^{-\omega(1)}\right) $, we deduce that {$c_1  = e^{O \left( r \delta + k \xi + n^{-\omega(1)} \right)} = e^{O(\eta_k)}, $}
                           using $r = nd^{1-\varphi}$ and the definition of $\eta_k$ from the theorem statement {(and noting that $\mu=O(d/n^2)$ when $k=3$).}
                   Hence, {recalling the definition  $H(\dv)=\pr_{\Bknm}(\dv) \widetilde H(\dv)$ in Claim~\ref{cl:Htilde},}~\eqn{eq:telescopeD} implies that for $\dv \in \D$,
                    \bee
                        \label{eq:telescope1D}
                            \pr_{\Dknm}(\dv) = \pr_{ \Bknm } (\dv) \tilde{H}(\dv) {e^{O(\eta_k)}}.
                    \ee
                    In particular, for $\dv \in \W \subset \D$, {since $ \tilde{H}(\dv) =  e^{ O(k\xi)}= e^{O(\eta_k)}$, we have~\eqn{newD} provided that $\eta_k$ is bounded}. 
                    {So, to finish the proof of the claim, it only remains to check the last assertion in the theorem,} that $\eta_k < 3c$ for $n$ sufficiently large. Large $n$ is assumed implicitly in the rest of the proof.
                
                Firstly, let $k \geq 4$. The assumptions of the theorem include    $k^2\log^2 n/\sqrt{n} < c$ and $k^3 d^{1 - 3\varphi} < c$.
                Recall that $\frac{4}{9} <\varphi< \frac 12$.  For the remaining term in $\eta_k$, we have  $d = \mu \binom{n-1}{k-1}  \geq \mu \left( \frac nk \right)^{k-1}$, so  
                \begin{align*}
                    \mu k^2 n d^{1- 3 \varphi} \leq \mu^{2- 3\varphi} k^2 n \left(\frac kn \right)^{(3\varphi - 1) (k-1)} \leq   k^2 n \left(\frac kn \right)^{(3\varphi - 1)(k-1)} 
                \end{align*}
 If $k=O(1)$ this is  $O(n^{1-(3\varphi -1)(k-1)})=o(1)$,  as $3\varphi -1 > \frac{1}{3}$ and $k-1 \geq 3$. On the other hand,  since $k<n^{1/4}$,  the right-hand side is at most $n^{3/2- (k-1)/4}=o(1)$ when  $k\ge 8$. So   indeed $\eta_k <3c$.
      
                For $k = 3$, we have $ d^{1-3\varphi} <c/9$ by {the theorem's} assumption. Moreover, {$d\le \binom{n}{2}$ and so} {$d^{2-4\varphi}/n \le n^{4-8\varphi -1} < n^{-5/9}$} and hence $\eta_3 < 3c$.                     This completes the proof of the claim. 
                 \end{proof}
                    The theorem follows from~\eqn{eq:telescope1D} considering the definition of $\tilde{H}(\dv)$.  
        \end{proof}
      	\begin{proof} [Proof of \thref{t:main}] We use
      	\thref{cor:binapp_dense,cor:binapp_sparse}. 
      	Fix $k = k(n) \leq n^{ C}$. We distinguish two cases for $d :=km/n$. Let $\eps=  \frac19-C>0 $.  If  $k^6< d^{1-\eps}$  and $d > n^{1/3}$, %
        		apply Claim~\ref{cor:binapp_dense} 
        		with  $\varphi =  \frac12 - \frac{\eps}{12}$, and with the parameters restricted as in \thref{t:denseCase}. Note that in the case $k=3$ we have $d^{2-4\phi}<d^{1/27}<n^{1/9}$. Also note that for general $k$, we have   $k^3 d^{1- 3\varphi} < d^{ -\eps/4 }  = o(1)$. Moreover, for $k\ge 4$ we have $\mu n k^2 d^{1-3\phi}<\mu n k^2/d^{4/9} = d^{5/9}n^{19/9}/\binom{n}{k} = o(1)$ since $d<n^{k-1}$.
Hence,  $\eta_k = o(1)$  and the assumptions of the claim are satisfied. The set $\W$ has the desired properties. 
        		
        		Secondly, let  $d \leq \max \{  k^{6/(1-\eps)}, n^{1/3} \} < n^{2/3}$ (since $k \le n^{1/9-\eps}$).   Recall that for Claim~\ref{cor:binapp_sparse}, the parameter restrictions and definitions, in particular that of $\psi_k$, are inherited from \thref{t:sparseCase}. Defining  $\Delta_* =  \lceil d \rceil$, we will show that $\pseta = o(1)$. Indeed, for $k \geq 4$, using $m=dn/k$ we have 
\bean
\psi_k &\leq& \frac{  k^2}{m} \left((d+1)^2 + k(d+1) \log^4 n +k^2 \log^9 n \right) \\
       &=&  \frac{ k^3 d + O(k^4 \log^4 n)}{n} +\frac{ k^4 \log^9n}{m}.\eean
This bound is easily seen to be $o(1)$ because $k^3d<n^{1/3-3\eps}n^{2/3}$, and using the assumption  of \thref{t:main} that $m =  dn/k > \sqrt{n}$. 
 		       		
        		In  the  case $k=3$, we necessarily have $d \leq n^{1/3}$, and thus $\psi_3 =O\big(m^{-1}(d^3 + \log^9 n)\big)= O(n^{-1/3})$.
        		Applying Claim~\ref{cor:binapp_sparse} gives the desired conclusion  in either case.
        		\end{proof}

%




\bibliographystyle{amsplain}

\begin{thebibliography}{99}
\bibitem{agir16}
D.~Altman, C.~Greenhill, M.~Isaev, and R.~Ramadurai.
\newblock A threshold result for loose {H}amiltonicity in random regular
  uniform hypergraphs.
\newblock {\em J. Combin. Theory Ser. B}, 142:307--373, 2020.

\bibitem{bef}
S.~Behrens, C.~Erbes, M.~Ferrara, S.~G. Hartke, B.~Reiniger, H.~Spinoza, and
  C.~Tomlinson.
\newblock New results on degree sequences of uniform hypergraphs.
\newblock {\em Electron. J. Combin.}, 20(4):P14, 2013.

\bibitem{bc78}
E.~A. Bender and E.~R. Canfield.
\newblock The asymptotic number of labeled graphs with given degree sequences.
\newblock {\em J. Combin. Theory Ser. A}, 24(3):296--307, 1978.

\bibitem{bg16}
V.~Blinovsky and C.~Greenhill.
\newblock Asymptotic enumeration of sparse uniform hypergraphs with given
  degrees.
\newblock {\em European J. Combin.}, 51:287--296, 2016.

\bibitem{bg16lin}
V.~Blinovsky and C.~Greenhill.
\newblock Asymptotic enumeration of sparse uniform linear hypergraphs with
  given degrees.
\newblock {\em Electron. J. Combin.}, 23(3):P3.17, 2016.

\bibitem{cayley89}
A.~Cayley.
\newblock A theorem on trees.
\newblock {\em Quart. J. Math.}, 23:376--378, 1889.

\bibitem{cfmr96}
C.~Cooper, A.~Frieze, M.~Molloy, and B.~Reed.
\newblock Perfect matchings in random {$r$}-regular, {$s$}-uniform hypergraphs.
\newblock {\em Combin. Probab. Comput.}, 5(1):1--14, 1996.

\bibitem{dfrs17}
A.~Dudek, A.~Frieze, A.~Ruci{\'n}ski, and M.~{\v{S}}ileikis.
\newblock Embedding the {E}rd{\H{o}}s--{R}{\'e}nyi hypergraph into the random
  regular hypergraph and {H}amiltonicity.
\newblock {\em J. Combin. Theory Ser. B}, 122:719--740, 2017.

\bibitem{dfrs13}
A.~Dudek, A.~Frieze, A.~Ruci\'nski, and M.~\v{S}ileikis.
\newblock Approximate counting of regular hypergraphs.
\newblock {\em Inform. Process. Lett.}, 113(19-21):785--788, 2013.

\bibitem{egm19}
P.~L. Erd{\H{o}}s, C.~Greenhill, T.~R. Mezei, I.~Mikl{\'o}s, D.~Solt{\'e}sz,
  and L.~Soukup.
\newblock The mixing time of the swap switch {M}arkov chains: a unified
  approach.
\newblock {\em arXiv:1903.06600}, 2019.

\bibitem{jlr}
S.~Janson, T.~\L uczak, and A.~Rucinski.
\newblock {\em Random graphs}.
\newblock Wiley-Interscience Series in Discrete Mathematics and Optimization.
  Wiley-Interscience, New York, 2000.

\bibitem{klp17}
G.~Kuperberg, S.~Lovett, and R.~Peled.
\newblock Probabilistic existence of regular combinatorial structures.
\newblock {\em Geom. Funct. Anal.}, 27(4):919--972, 2017.

\bibitem{lw17}
A.~Liebenau and N.~Wormald.
\newblock Asymptotic enumeration of graphs by degree sequence, and the degree
  sequence of a random graph.
\newblock {\em arXiv:1702.08373}, 2017.

\bibitem{McD}
C.~McDiarmid.
\newblock On the method of bounded differences.
\newblock In {\em Surveys in Combinatorics}, 
pages 148--188. Cambridge University Press, 1989.

\bibitem{mckay85}
B.~D. McKay.
\newblock Asymptotics for symmetric {$0$}-{$1$} matrices with prescribed row
  sums.
\newblock {\em Ars Combin.}, 19(A):15--25, 1985.

\bibitem{mw90}
B.~D. McKay and N.~C. Wormald.
\newblock Asymptotic enumeration by degree sequence of graphs of high degree.
\newblock {\em European J. Combin.}, 11(6):565--580, 1990.

\bibitem{mw91}
B.~D. McKay and N.~C. Wormald.
\newblock Asymptotic enumeration by degree sequence of graphs with degrees
  {$o(n^{1/2})$}.
\newblock {\em Combinatorica}, 11(4):369--382, 1991.

\bibitem{mw97}
B.~D. McKay and N.~C. Wormald.
\newblock The degree sequence of a random graph. {I}. {T}he models.
\newblock {\em Random Structures \& Algorithms}, 11(2):97--117, 1997.

\bibitem{moon70}
J.~W. Moon.
\newblock {\em Counting labelled trees}.  
\newblock Canadian Mathematical Congress, Montreal, 1970.

\bibitem{read59}
R.~C.~Read.
\newblock {\em Some enumeration problems in graph theory}. 
\newblock PhD thesis, University of London, 1959.

\bibitem{wormald78}
N.~C. Wormald.
\newblock {\em Some problems in the enumeration of labelled graphs}.
\newblock PhD thesis, University of Newcastle, 1978.

\bibitem{xyh13}
J.~Xi, R.~Yoshida, and D.~Haws.
\newblock Estimating the number of zero-one multi-way tables via sequential
  importance sampling.
\newblock {\em Ann. Inst. Statist. Math.}, 65(4):763--783, 2013.

%

\end{thebibliography}


\begin{aicauthors}
\begin{authorinfo}[nina]
Nina Kam\v cev \\
Department of Mathematics \\
Faculty of Science\\
University of Zagreb\\
Croatia\\
nina\imagedot{}kamcev\imageat{}math\imagedot{}hr
\end{authorinfo}
\begin{authorinfo}[anita]
  Anita Liebenau\\
 School of Mathematics and Statistics\\
 UNSW Sydney NSW 2052\\
 Australia\\
 a\imagedot{}liebenau\imageat{}unsw\imagedot{}edu\imagedot{}au 
\end{authorinfo}
\begin{authorinfo}[nick]
  Nick Wormald\\
  School of Mathematics\\
  Monash University VIC 3800\\
  Australia \\
  nick\imagedot{}wormald\imageat{}monash\imagedot{}edu
\end{authorinfo}
\end{aicauthors}

\end{document}